\newtheorem{result}{{\bf Proposition}}
\newtheorem{lemma}{Lemma}
\renewcommand\theequation{\thesection.\arabic{equation}}
\renewcommand{\theequation}{\arabic{section}.\arabic{equation}}
\title{An Asymptotic Analysis of Spike Self-Replication and Spike
  Nucleation of Reaction-Diffusion Patterns on Growing 1-D Domains}
\author{Chunyi Gai \thanks{Department of Mathematics and Statistics,
    University of Northern British Columbia, Prince George, B.C.,
    Canada, V2N 4Z9 (corresponding author)}, Edgardo
  Villar-Sep\'ulveda \thanks{Department of Engineering Mathematics,
    University of Bristol, Bristol, U.K. BS8 1TW}, Alan Champneys
  \thanks{Department of Engineering Mathematics, University of
    Bristol, Bristol, U.K. BS8 1TW}, Michael J. Ward
  \thanks{Department of Mathematics, University of British Columbia,
    Vancouver, B.C., Canada, V6T 1Z2}} \date{\today}
\numberwithin{equation}{section}
\begin{document}
\maketitle
 
\begin{abstract}
In the asymptotic limit of a large diffusivity ratio, certain two-component reaction-diffusion (RD) systems can admit localized spike solutions on a one-dimensional finite domain in a far-from-equilibrium nonlinear regime. It is known that two distinct bifurcation mechanisms can occur which generate spike patterns of increased spatial complexity as the domain half-length $L$ slowly increases; so-called {\em spike nucleation} and {\em spike self-replication}. Self-replication is found to occur via the passage beyond a saddle-node bifurcation point that can be predicted through linearization around the inner spike profile.  In contrast, spike nucleation occurs through slow passage beyond the saddle-node of a nonlinear boundary-value problem defined in the outer region away from the core of a spike. Here, by treating $L$ as a static parameter, precise conditions are established within the semi-strong interaction asymptotic regime to determine which occurs, conditions that are confirmed by numerical simulation and continuation.

For the Schnakenberg and Brusselator RD models, phase diagrams in parameter space are derived that predict whether spike self-replication or spike nucleation will occur first as $L$ is increased, or whether no such instability will occur. For the Gierer-Meinhardt model with a non-trivial activator background, spike nucleation is shown to be the only possible spike-generating mechanism. From time-dependent PDE numerical results on an exponentially slowly growing domain, it is shown that the analytical thresholds derived from the asymptotic theory accurately predict critical values of $L$ where either spike self-replication or spike-nucleation will occur. The global bifurcation mechanism for transitions to patterns of increased spatial complexity is further elucidated by superimposing time-dependent PDE simulation results on the numerically computed solution branches of spike equilibria in which $L$ is the primary bifurcation parameter.
\end{abstract}

\section{Introduction}

The formation of spatial patterns on growing domains is one of the key issues in developmental biology. Originating in 1952, Turing \cite{turing} pioneered the formulation and linear stability analysis of a system of reaction-diffusion (RD) equations to describe the formation of morphogen gradient patterns on the developing embryo. Since then, there have been numerous mathematical models proposed to study pattern-formation aspects of various developmental systems in biology. In particular, it is well-known that spatial domain growth can significantly influence pattern formation on the skin of the zebrafish \cite{kondo}, and there are now various mechanisms that have been proposed to theoretically explain these skin pigmentation patterns (cf.~\cite{volkening}, \cite{kondo2}). In a broader context, the important role of domain growth for various physical problems, with many related to fluid mechanics, are surveyed in \cite{knobloch_2015}.

With regards to modeling biological patterns through {\em activator-inhibitor} RD systems, it was shown from numerical experiments and a scaling law derived by Crampin {\em et al} \cite{crampin_1999} (see also \cite{crampin_mode}) that domain growth can generate robust mode-doubling spatial patterning in certain RD systems under conditions of a Turing instability. This pioneering work was extended to study the effect of 1-D nonuniform domain growth \cite{crampin_2002}, domain growth in multi-dimensional settings and on manifolds \cite{plaza_2004}, domain growth driven by the concentration of the diffusing species (cf.~\cite{krause_concengrowth,neville}), and the strong sensitivity of spatial patterning when the domain growth is very slow \cite{barrass}. Numerical methods to study RD pattern formation on growing multi-dimensional domains have been developed and implemented in \cite{madzvamuse_2006,madzvamuse_velocity}, and these numerical studies have revealed various dynamical behaviors such as spot self-replication.

From a theoretical viewpoint, Turing's original linear stability analysis in \cite{turing} based on a linearization around a spatially homogeneous state does not apply in a time-dependent setting with uniform domain growth. In a more intricate non-autonomous setting, necessary conditions for the emergence of spatial patterns from a spatially homogeneous state under uniform domain growth in 1-D \cite{madzvamuse_2010} and, more recently, in multi-dimensional contexts and on manifolds \cite{krause_turing} have been derived.

In contrast, for a different `far-from-equilibrium' nonlinear regime that also involves a large diffusivity ratio, it is known that stable spike-like patterns can occur in which the activator field is much more highly localized than the inhibitor (see e.g.~\cite{Gai} and references therein). This leads to a so-called {\em semi-strong interaction} asymptotic scaling in which activator spikes exist on an inner domain, and interact through the far-field inhibitor component \cite{wardfirst,DoelmanKaper}.

Starting with a one-spike steady-state solution on a 1-D finite domain, the main goal of this paper is to use asymptotic analysis and bifurcation theory to study the mechanisms that lead to the generation of spike patterns of increased spatial complexity as the domain length slowly increases in time. Although we will focus on an imposed slow exponential domain growth, our theoretical results will apply to other models of slow uniform domain growth.

To illustrate the rather wide applicability of our identified mechanisms for spike generation, the analysis is undertaken for three prototypical two-component RD systems that allow for the existence of spike solutions in the semi-strong interaction regime of a large diffusivity ratio. Our analysis for this semi-strong regime is distinct from that in \cite{ueda} where spike self-replication was studied for slow linear domain growth in the weak-interaction regime where localized spikes interact primarily through their exponentially small tails.  We emphasize that since spike patterns are {\em far-from-equilibrium} spatial structures (cf. \cite{nishiura_book}), the linear stability analyses of spatially uniform states in \cite{madzvamuse_2010,krause_turing} (see references therein), which extended Turing's original stability theory \cite{turing} to a non-autonomous setting, is not the relevant theoretical framework for analyzing spike-generating mechanisms on a slowly growing domain.

\subsection{Three example reaction-diffusion systems}

The first system we study is  the Schnakenberg activator-substrate model used in \cite{crampin_1999}. In dimensionless form, this model can be written in the form
\begin{subequations}\label{Schnakenberg}
    \begin{align}
        v_t &= \varepsilon^2 \, v_{XX} - v + a + u \, v^2 \, , \quad - L < X < L \,,
        \\
        u_t &= D \, u_{XX} + b - u \, v^2 \, , \quad  -L < X < L \, ,
    \end{align}
\end{subequations}
with Neumann boundary conditions $v_X(\pm L, t) = u_X(\pm L,t) = 0$. The parameters $a, b > 0$ represent constant activator and substrate feed rates, respectively. In the semi-strong regime, we assume that $D = \mathcal O(1)$ and $0 < \varepsilon \ll 1$, so that the activator $v$ diffuses more slowly than does $u$.

The second system is the well-studied Brusselator model \cite{prig}. As non-dimensionalized in Appendix \ref{app:nondim}, this system can be written as 
\begin{subequations}\label{Brusselator}
    \begin{align}
        v_t &= \varepsilon^2 \, v_{XX} - v + a + f u \, v^2 \, , \quad - L < X < L \,; \qquad v_x(\pm L, t) = 0\,,
        \\
        u_t &= D \, u_{XX} + v - u \, v^2 \, , \quad  - L < X < L \,; \qquad u_x(\pm L, t) = 0 \,,     
    \end{align}
\end{subequations}
where $a > 0$ and $0 < f < 1$ are the two key model parameters. Again, we will consider the semi-strong regime $D = \mathcal O(1)$ and $0 < \varepsilon \ll 1$, so that $v$ diffuses slower than $u$.

Finally, we consider the Gierer–Meinhardt (GM) model \cite{gierer} for an activator $\mathcal A$ and inhibitor $\mathcal H$. After non-dimensionalization, (see Appendix \ref{app:nondim}), this can be written in the form
\begin{subequations}\label{GM}
    \begin{align}
      \mathcal A_t &= \varepsilon^2 \, \mathcal A_{XX} - \mathcal A + \frac{\mathcal A^2}{\mathcal H} + \kappa \, , \quad - L < X < L \, ; \qquad \mathcal A_X(\pm L, t) = 0 \,,
      \\
      \tau  \mathcal H_t &= D \, \mathcal H_{XX} - \mathcal H + \mathcal A^2 \, , \quad - L < X < L \, ; \qquad \mathcal H_X(\pm L, t) = 0 \,,
    \end{align}
\end{subequations}
where $\tau > 0$ is a constant, $D = \mathcal O(1)$ and $0 < \varepsilon \ll 1$. The novel feature of the model \eqref{GM}, as compared to the analysis in \cite{iron-ward-wei} of spike patterns, is that in \eqref{GM} we now allow for a non-trivial background $\kappa>0$ for the activator. As we will show below, the inclusion of $\kappa > 0$ is necessary for generating new spikes as the domain length slowly increases.

In the three RD systems \eqref{Schnakenberg}--\eqref{GM}, we assume that the one-dimensional domain of half-length $L(t)$ grows exponentially in time with an asymptotically slow growth rate $\rho$, with $\rho = \mathcal O(\varepsilon^2)$, so that
\begin{equation}\label{domain}
    L(t) = L_0 \, e^{\rho \, t} \,.
\end{equation}
The initial domain half-length is $L_0 = 1$. Upon converting the RD model \eqref{Schnakenberg} to a Lagrangian framework as in \cite{crampin_1999}, where we set $x = X/L$, we obtain the following rescaled system for \eqref{Schnakenberg} given by
\begin{subequations}\label{Sch_Lagrange}
    \begin{align}
        v_t &= \varepsilon_L^2 \, v_{xx} - v - \rho \, v + a + u \, v^2 \, , \quad \, - 1 < x < 1 \, ,
        \\
        u_t &= D_L \, u_{xx} - \rho \, u + b - u \, v^2 \, , \quad - 1 < x < 1 \,.
    \end{align}
\end{subequations}   
In \eqref{Sch_Lagrange}, $- \rho v$ and $- \rho u$ are the so-called {\em dilution terms}, and
% we have defined $\varepsilon_L$ and $D_L$ by
\begin{align}\label{scale}
    \varepsilon_L \equiv \frac{\varepsilon}{L} \, , \quad D_L \equiv \frac{D}{L^2} \, .
\end{align}
Since $\rho = \mathcal O\left(\varepsilon^2\right) \ll 1$, we obtain that, to leading order, \eqref{Sch_Lagrange} simplifies to
\begin{subequations}\label{Sch_core}
    \begin{align}
        v_t &= \varepsilon_L^2 \, v_{xx} - v  + a + u \, v^2 \, , \quad - 1 < x < 1 \, ; \qquad v_x(\pm 1, t) = 0 \, , \label{Sch1}
        \\
        u_t &= D_L \, u_{xx} + b - u \, v^2 \, , \quad - 1 < x < 1 \, ; \qquad u_x(\pm 1, t) = 0 \, . \label{Sch2}
    \end{align}
\end{subequations}
In a similar way, the Lagrangian representation of the Brusselator \eqref{Brusselator}, assuming that $\rho = \mathcal O\left(\varepsilon^2\right) \ll 1$ is negligible, is given by
\begin{subequations}\label{Brusselator_Lagrange}
    \begin{align}
        v_t &= \varepsilon_L^2 \, v_{xx} - v + a + f u \, v^2 \, , \quad - 1 < x < 1 \, ; \qquad v_x(\pm 1, t) = 0 \, , \label{Brusselator1}
        \\
        u_t &= D_L \, u_{xx} + v - u \, v^2 \, , \quad - 1 < x < 1 \, ; \qquad u_x(\pm 1, t) = 0 \, , \label{Brusselator2}
    \end{align}
\end{subequations}
where $\varepsilon_L$ and $D_L$ are defined in \eqref{scale}. Finally, the Lagrangian representation for the GM model \eqref{GM}, upon neglecting the asymptotically small dilution terms, is given by
\begin{subequations}\label{GM_Lagrange}
    \begin{align}
        \mathcal A_t &= \varepsilon_L^2 \, \mathcal A_{xx} - \mathcal A + \frac{\mathcal A^2}{\mathcal H} + \kappa \, , \quad - 1 < x < 1 \, ; \qquad \mathcal A_x(\pm 1, t) = 0 \, ,
        \\
        \tau  \mathcal H_t &= D_L \, \mathcal H_{xx} - \mathcal H + \mathcal A^2 \, , \quad - 1 < x < 1 \, ; \qquad \mathcal H_x(\pm 1, t) = 0 \, .
    \end{align}
\end{subequations}

\subsection{The main results}

For these three RD systems, in the limit $\varepsilon_L\to 0$ we will use asymptotic analysis and numerical-path following methods to identify and analyze in detail two distinct mechanisms for generating new spikes as the domain length slowly increases. One mechanism is {\em spike self-replication}, whereby each individual spike self-replicates into two, which doubles the number of spikes on the domain. The other mechanism is {\em spike nucleation} or insertion, whereby a new spike is generated at the domain boundaries or from the quiescent background between adjacent spikes as $L$ increases past a threshold.

By treating $L$ as a static bifurcation parameter, we will show that spike self-replication can occur for the Schnakenberg and Brusselator models owing to the slow passage beyond a saddle-node bifurcation point associated with the local profile of a spike. For all three RD models, we will show that the mechanism for spike nucleation is a slow passage beyond an additional saddle-node point that can be predicted by the non-existence threshold of solutions to a nonlinear boundary-value problem defined in the outer region away from the core of a spike. For certain parameter ranges, that we will identify in our analysis, either spike self-replication or spike-nucleation can occur for the Schnakenberg and Brusselator RD systems. In contrast, spike self-replication behavior is not possible for the GM model \eqref{GM_Lagrange}, and spike nucleation will occur only when $0 < \kappa < 1$.  In other parameter ranges, we will find that no spike-generating mechanism can occur for any of these models. For each, we will provide phase diagrams space which delineate parameter values in which the three distinct behaviors can occur. Those state diagrams for the Schnakenberg and Brusselator examples are summarized in Fig.~\ref{fig:phase_diag}.

From time-dependent PDE numerical results on an exponentially slow-growing domain with growth rate $\rho = \varepsilon^2$, we will show that the analytical thresholds derived from our asymptotic theory for each RD model accurately predict the critical values of $L$ where spike self-replication or spike nucleation will occur. By choosing $\rho = \mathcal O(\varepsilon^2)$, we ensure that the domain growth rate is asymptotically smaller than the slow $\mathcal O(\varepsilon)$ growth rate that is associated with any unstable small eigenvalues of the linearization of spike equilibria (cf.~\cite{kolokolnikov2005existence}). In this sense, when $\rho = \varepsilon^2$, the time-dependent solution should be well-approximated by quasi-steady solutions parametrized by $L$. Indeed, we shall find, through numerical continuation of steady multi-spike equilibria, that this approximation is close, and that the eigenfunctions of the fold equilibria of the steady equilibria also predict which of nucleation or self-replication occurs. 

\begin{figure}[h!tbp]
    \centering
    \begin{subfigure}[b]{0.48\textwidth}  
        \includegraphics[width=\textwidth, height=6.0cm]{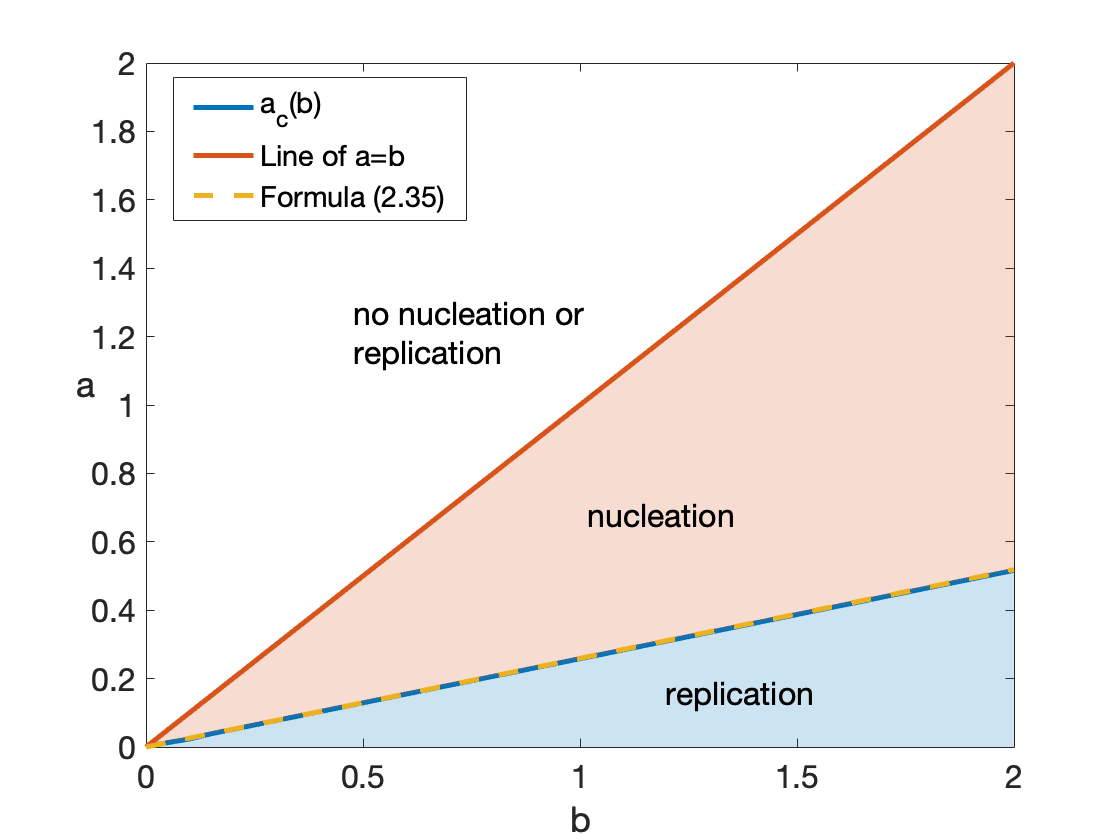}
        \caption{Schnakenberg model}
        \label{fig:Sch_phase_diag}
    \end{subfigure}
    \begin{subfigure}[b]{0.48\textwidth}
        \includegraphics[width=\textwidth, height=6.0cm]{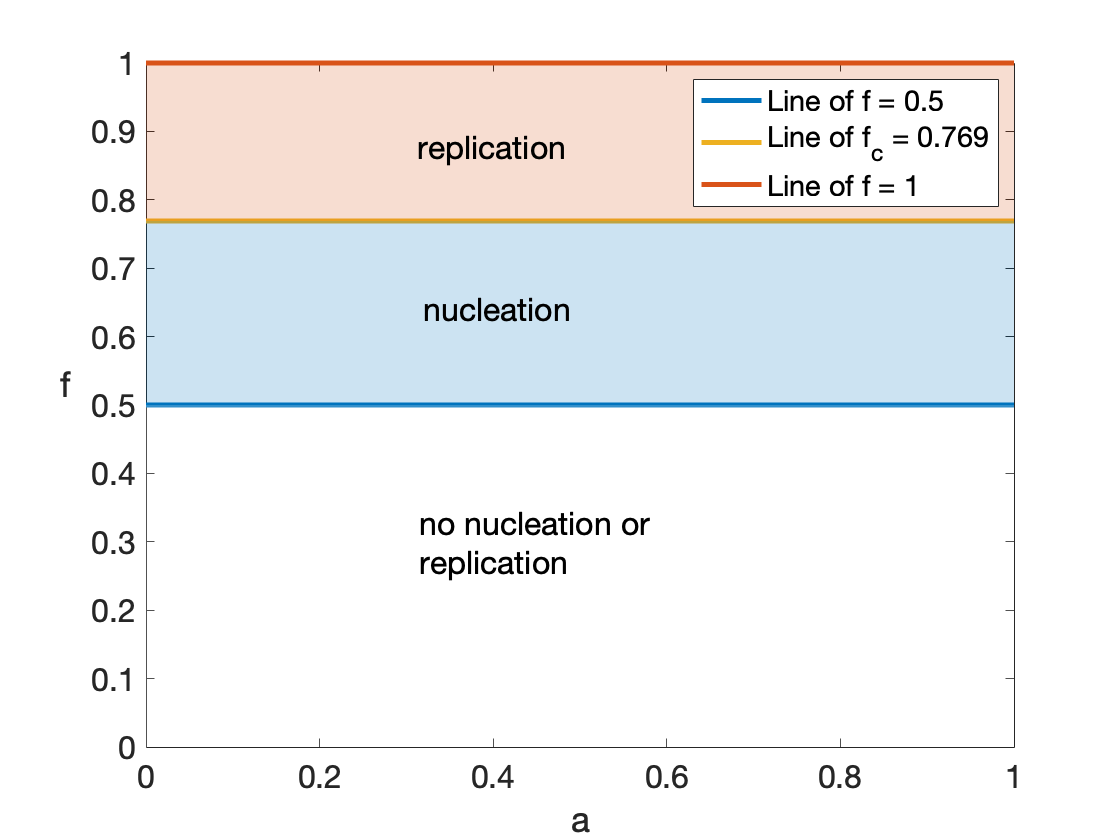}
        \caption{Brusselator model}
        \label{fig:Br_phase_diag}
    \end{subfigure}
    \caption{Phase diagrams in parameter space indicating the type of spike-generating mechanism that will occur as the domain half-length $L$ increases for the Schnakenberg (a) and Brusselator (b) models when $\varepsilon/\sqrt{D} \ll 1$. See \S \ref{sec:sch}--\S \ref{sec:no_nucleation} for the details. The solid blue line $a_c(b)$ in (a), computed numerically from \eqref{sch:integral_Bfinal} for $\varepsilon = 0.01$ and $D = 2$, agrees closely with \eqref{sch:ac_approx}, which neglects the $\varepsilon/\sqrt{D}$ term. For the Brusselator in (b), the thresholds in $f$ are independent of the parameter $a$ when $\varepsilon/\sqrt{D} \ll 1$.}
    \label{fig:phase_diag}
\end{figure}

Recent work \cite{Nico,FahadWoods,FahadGrayScott,Degenerate} has considered stable spikes in a class of RD systems including \eqref{Schnakenberg}, \eqref{Brusselator} and \eqref{GM} as part of a wider zoo of localized patterns for the infinite domain problem. Here, semi-strong interaction analysis predicts single-spike solutions that have monotonically decaying exponential tails. The limit of their existence is a saddle-node bifurcation, which can be continued numerically into the broader {\em homoclinic snaking} regime where complex localized patterns occur that have oscillatory decaying tails \cite{Beck,kno}. As we shall see, the parameter boundary we find here beyond which there is no nucleation or self-replication of the single spike on the growing domain, corresponds precisely to this saddle-node boundary of spikes of the infinite-domain problem. We shall return to this equivalence in the Discussion (\S \ref{sec:discussion}) below, and how the results here might extend beyond the semi-strong interaction regime towards the snaking region.

\subsection{Outline}

The rest of this paper is organized as follows. Treating the domain half-length $L$ as a static parameter, in \S \ref{sec:sch} we use the method of matched asymptotic expansions in the limit $\varepsilon_L \to 0$ to construct quasi-steady state spike solutions for \eqref{Sch_core}. In our analysis, we will derive critical values for the domain half-length $L$ where either spike self-replication or spike nucleation will occur depending on the parameter range of $a$ and $b$. In \S \ref{sec:bruss} we present a similar analysis for the Brusselator model, which can undergo either spike self-replication or spike nucleation events as the domain grows when $1/2 < f < 1$. Section \ref{sec:gm} then proceeds to analyze the possibility of spike nucleation for the GM model \eqref{GM_Lagrange}, which is shown to occur as $L$ exceeds a certain threshold only when $0 < \kappa < 1$. For all three RD models, in \S \ref{sec:no_nucleation} we will show that neither spike self-replication nor spike nucleation can occur in the semi-strong regime whenever there is a spatially homogeneous equilibrium state that is within the range of well-posedness for the nonlinear boundary value problem that is formulated in the outer region between adjacent spikes. Section \ref{sec:transitions} then turns to numerical results to illustrate the global bifurcation mechanism for transitions to patterns of increased spatial complexity. This is done by superimposing time-dependent PDE simulation results for each of our three RD models on the branches of spike equilibria computed using numerical continuation in $L$. The continuation is performed in \textit{pde2path} \cite{pde2path}, which can also analyse stability and identify bifurcation points. Section \ref{sec:discussion} contains a summary of our results and uses our parameter phase diagrams to theoretically explain previous numerical results \cite{crampin_1999,methods,crampin_2002}, in which spike self-replication or spike nucleation behavior was observed. We also show the connection of our results to what is known about the problem on the infinite domain $L \to \infty$, and briefly discuss open problems in higher spatial dimensions. 

\section{The Schnakenberg model}\label{sec:sch}

In this section, we analyze two distinct types of instabilities of spike patterns for the Schnakenberg model \eqref{Sch_core} that lead to the creation of additional spikes as the domain length increases slowly. One instability is referred to as spike self-replication, where the existence of a quasi-steady-state spike pattern is lost owing to the occurrence of a saddle-node bifurcation associated with an inner or {\em core problem}. The ghost effect of this bifurcation triggers the onset of self-replication events that ultimately lead to the doubling of the number of spikes on the domain. For some range of the parameter $a$ in \eqref{Sch_core}, we will show that this instability first occurs when $D_L \equiv D/L^2$ decreases below a threshold, or equivalently when the domain half-length $L$ in \eqref{Schnakenberg} increases above a threshold. The mechanism underlying spike self-replication is related to that studied previously for the Gray–Scott model \cite{doelman1997pattern,kolokolnikov2005existence,muratov2000static,muratov2002stability,kaper} on a fixed domain as a feed-rate parameter increases. In \cite{kolokolnikov2005existence}, it was verified that the criteria for spike self-replication that were formulated in \cite{skeleton} are satisfied for the Gray-Scott model in the semi-strong interaction regime.

The other type of instability that we will analyze is referred to as spike insertion or nucleation, where new spikes can emerge near the domain boundary or the midpoint between two adjacent spikes. For some range of the parameter $a$ in \eqref{Sch_core}, we will show that this instability first arises as $D_L$ decreases below a saddle-node point associated with a nonlinear outer problem as the domain half-length $L$ increases. The underlying mechanism for this behavior is found to be closely related to the phenomena of mesa-splitting in RD systems as studied in \cite{mesa_2007}.

We will derive explicit thresholds for these two distinct instabilities in the semi-strong interaction limit $\varepsilon \ll 1$ with $D = \mathcal O(1)$, and will delineate the range of the parameters in \eqref{Sch_core} where they occur. From a mathematical viewpoint, the novelty of the analysis in contrast to that in \cite{kolokolnikov2005existence} is that we now must couple a nonlinear core, or inner, problem for the spike profile to a nonlinear reduced scalar BVP defined in the outer region away from the spike.

\subsection{Asymptotic construction of quasi-equilibria: The core problem}\label{sec:sch_equil}

We begin by constructing a quasi steady-state spike solution for the Schnakenberg model \eqref{Sch_core} on the canonical interval $|x| \leq \ell$ with a single spike centered at $x = 0$. To construct $K$-spike equilibria of \eqref{Sch_core} on the domain $[- 1, 1]$, with equidistantly-spaced spikes, we only need to set $\ell = 1/K$ and perform a periodic extension of the results obtained for the canonical interval $|x| \leq \ell$.

In the inner region near $x = 0$, we introduce the inner variables $y$, $V$ and $U$ as
\begin{equation}\label{sch:inner}
    y = \frac{x}{\varepsilon_L} \, , \qquad v = \frac{\sqrt{D_L}}{\varepsilon_L} \, V(y) \, , \qquad u = \frac{\varepsilon_L}{\sqrt{D_L}} \, U(y) \, .
\end{equation}
In terms of $V$ and $U$, the steady state problem for \eqref{Sch_core} transforms, on $y \geq 0$, to
\begin{equation}\label{rep:sc_main}
    V_{yy} - V + \frac{\varepsilon_L}{\sqrt{D_L}} \, a + U \, V^2 = 0 \, , \qquad U_{yy} + \frac{\varepsilon_L}{\sqrt{D_L}} \, b - U \, V^2 = 0 \, ,
\end{equation}
and, from \eqref{scale}, we observe that
\begin{equation}\label{sch:scale}
    \frac{\varepsilon_L}{\sqrt{D_L}} = \frac{\varepsilon}{\sqrt{D}} \, .
\end{equation}
Since we will seek even solutions, we consider the half-line $0 \leq y < \infty$. Upon substituting the expansions
\begin{equation}\label{sch:expan}
    V = V_0 + \frac{\varepsilon}{\sqrt{D}} \, V_1 + \ldots \, , \qquad U = U_0 + \frac{\varepsilon}{\sqrt{D}} \, U_1 + \ldots \, ,
\end{equation}
into \eqref{rep:sc_main} we obtain, to leading order, the following coupled {\em core problem} that characterizes the spike profile:
\begin{equation}\label{sch:rep_core}
    V_{0yy} - V_0 + U_0 \, V_0^2 = 0 \, , \qquad U_{0yy} - U_0 \, V_0^2 = 0 \, , \quad y \geq 0 \, .
\end{equation}
This core problem is identical to that derived in \cite{kolokolnikov2005existence} for the Gray-Scott model. We will seek to path-follow the branch of solutions to \eqref{sch:rep_core} that has a single local maximum in $V_0$ and satisfies the symmetry condition $V_{0y}(0) = U_{0y}(0) = 0$. At the next order, the problem for $V_1$ and $U_1$ on $y \geq 0$ becomes
\begin{equation}\label{sch:u1v1}
    V_{1yy} - V_1 + a + 2 U_0 V_0 V_1 + U_1 V_0^2 = 0 \, , \qquad U_{1yy} + b - 2 U_0 V_0 V_1 - U_1 V_0^2 = 0 \, .
\end{equation}
To parameterize solution branches of \eqref{sch:rep_core} we define $B$ as
\begin{equation}\label{B1}
    B \equiv \lim_{y \to \infty} U_{0y} = \int_0^\infty U_0 V_0^2 \, dy \, .
\end{equation}
The far-field behavior of \eqref{sch:rep_core} is that $V_0 \to 0$ exponentially as $y \to \infty$ and
\begin{equation}\label{sch:far_u0}
  U_0 \sim B y + C_s + o(1) \, , \quad \mbox{as} \quad y \to \infty \, .
\end{equation}
The constant $C_s$ in \eqref{sch:far_u0}, depending on $B$ and the specific solution branch, must be computed numerically from the solution to \eqref{sch:rep_core}. For \eqref{sch:u1v1}, the far-field behavior is
\begin{equation}\label{sch:far_u1}
    V_1 \sim a \, , \quad U_1 \sim - {b y^2/2} \, , \quad \mbox{as} \quad y \to \infty \, .
\end{equation}
By using the numerical bifurcation software \textit{pde2path} \cite{pde2path}, we plot the bifurcation diagram of \eqref{sch:rep_core} for single-spike solutions as $B > 0$ is varied. The result is shown in Fig.~\ref{fig:bif_core}, where the vertical axis is
\begin{equation*}
    \beta \equiv U_0(0) V_0(0) \, .
\end{equation*}
Our results show that, along the upper solution branch, the single-spike solution disappears as a result of a saddle-node bifurcation as $B$ is increased past $B_c \sim 1.347$. Along the lower branch, the solution has an unstable volcano-shaped profile. The saddle-node point $B_c$ signifies the onset of self-replication (cf.~\cite{ward_2005,kolokolnikov2005existence}), and is consistent with the threshold computed in \cite{kolokolnikov2005existence,muratov2000static} for the Gray-Scott model.
   
\begin{figure}[htbp]
    \centering
    \includegraphics[width=0.95\textwidth, height=5.0cm]{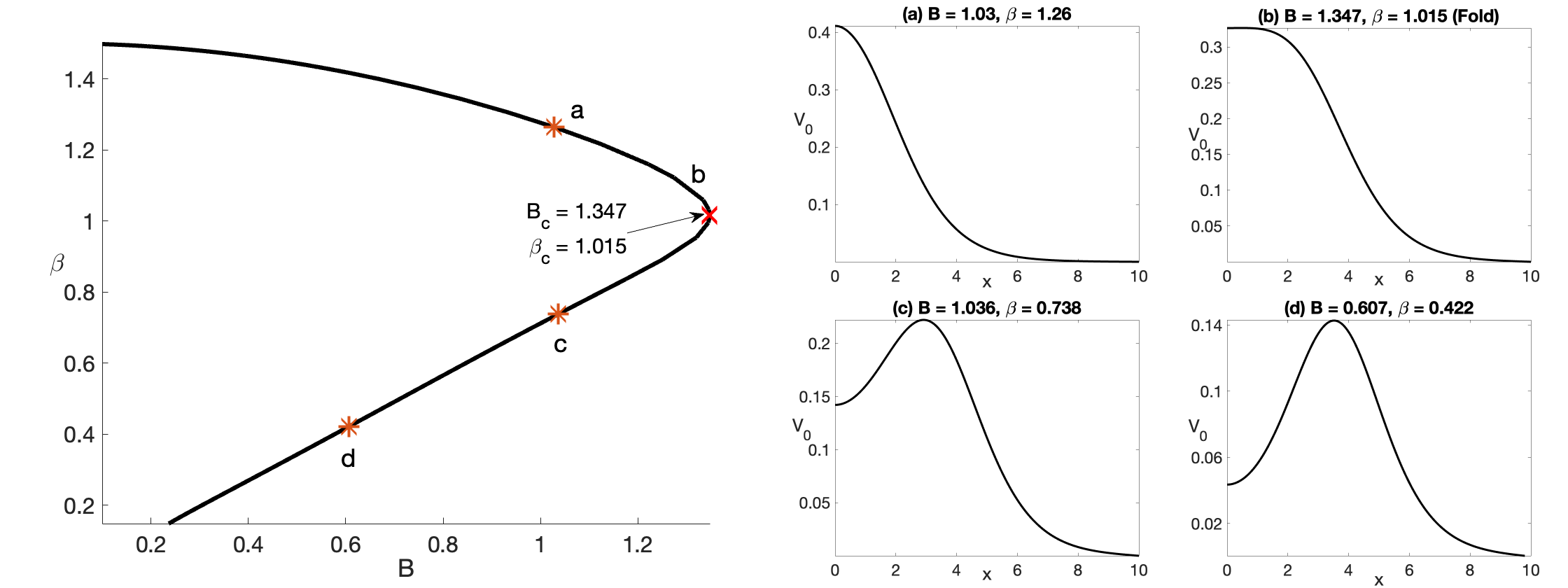}
    \caption{Left panel: Bifurcation diagram of spike solutions for the core problem \eqref{sch:rep_core} obtained using \textit{pde2path} \cite{pde2path}. A saddle-node bifurcation occurs at $B = B_c \approx 1.347$. The single-spike pattern is linearly stable only on the upper branch, which we refer to as the {\em primary branch}. Right panel: Spike profile of the activator $V_0$ at the four indicated points shown in the left panel. The profile has a volcano shape on the unstable lower branch.}
    \label{fig:bif_core}%
\end{figure}

In Fig.~\ref{fig:Sch_Cs} we plot $C_s$ and $U_0(0)$ versus $B$ along the upper branch of the bifurcation diagram in Fig.~\ref{fig:bif_core}, as computed from the numerical solution of the core problem \eqref{sch:rep_core} with far-field behavior \eqref{sch:far_u0}. We observe that $C_s$ is positive on this branch.

To determine the linear stability of the two solution branches in the left panel of Fig.~\ref{fig:bif_core}, in Appendix \ref{app:stab} we derive the following eigenvalue problem on $0<y<\infty$ that characterizes instabilities that can occur on an order $\mathcal O(1)$ time-scale:
\begin{subequations}\label{schnak:eig_prob}
    \begin{align}
        \Phi_{0yy} - \Phi_0 + 2 U_0 V_0 \Phi_0 + V_0^2 N_0 &= \lambda \Phi \,; \qquad \Phi_{0y}(0)=0 \,, \quad \Phi_0\to 0 \quad \mbox{as} \quad y \to \infty \,,
        \\
        N_{0yy} - 2 U_0 V_0 \Phi_0 - V_0^2 N_0 &=0 \,; \qquad N_{0y}(0) = 0 \,, \quad N_{0y} \to 0  \quad \mbox{as} \quad y \to \infty\,.
    \end{align}  
\end{subequations}
From a numerical computation of the dominant eigenvalues of \eqref{schnak:eig_prob}, in Fig.~\ref{fig:sch_eig} we show that the primary solution branch in the left panel of Fig.~\ref{fig:bif_core} is linearly stable on an $\mathcal O(1)$ time-scale, while the other branch is unstable. At the saddle-node bifurcation point $B = B_c$, the neutral mode for \eqref{schnak:eig_prob} where $\lambda=0$ is simply $\Phi_0 = V_{0\beta}$ and $N_0 = U_{0\beta}$, as seen by differentiating the core problem \eqref{sch:rep_core} with far-field behavior \eqref{sch:far_u0} with respect to $\beta \equiv U_0(0) V_0(0)$ and using $dB/d\beta = 0$ at $\beta = \beta_c$. We observe from Fig.~\ref{fig:sch_dimple} that $V_{0\beta}$ has a dimple-shape, which is a necessary condition for the initiation of spike self-replication as formulated in \cite{skeleton} and further studied in \cite{ward_2005,kolokolnikov2005existence}.

\begin{figure}[htbp]
    \centering
    \includegraphics[width=0.55\textwidth, height=5.0cm]{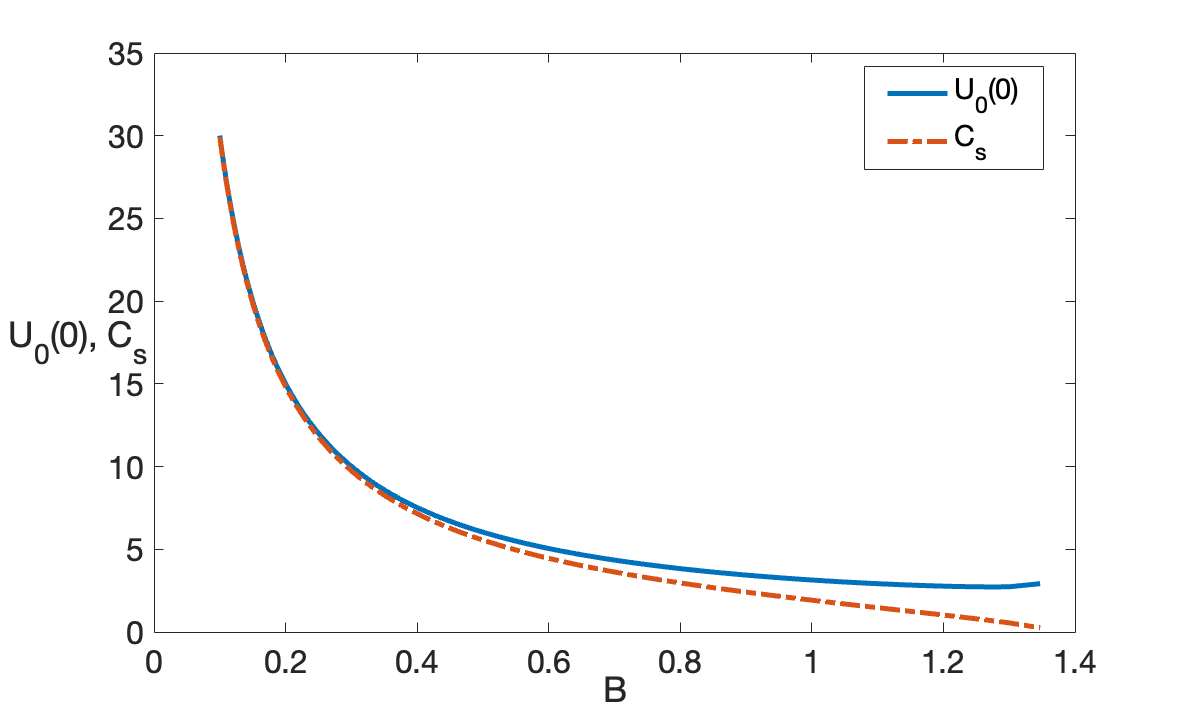}
    \caption{Plots of $C_s$ and $U_0(0)$ as $B$ varies on the upper branch of the bifurcation diagram shown in Fig.~\ref{fig:bif_core}, computed from the numerical solution to the core problem \eqref{sch:rep_core} with \eqref{sch:far_u0}. Observe that $C_s > 0$ up to the fold point $B_c = 1.347$ where $C_{sc} \equiv C_s(B_c) \approx 0.247$.}
    \label{fig:Sch_Cs}
\end{figure}

\begin{figure}[h!tbp]
    \centering
    \begin{subfigure}[b]{0.48\textwidth}  
        \includegraphics[width=\textwidth, height=6.0cm]{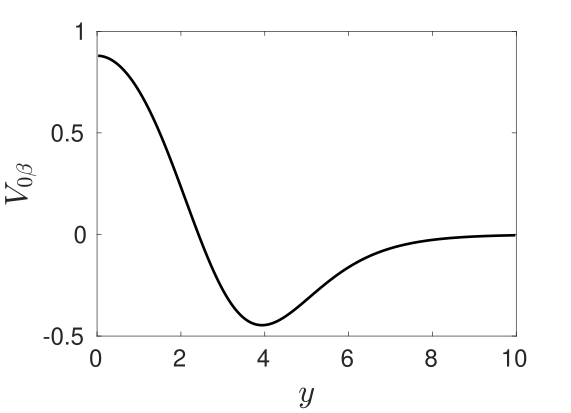}
        \caption{Dimple eigenfunction at the fold point}
        \label{fig:sch_dimple}
    \end{subfigure}
    \begin{subfigure}[b]{0.48\textwidth}
        \includegraphics[width=\textwidth,height=6.0cm]{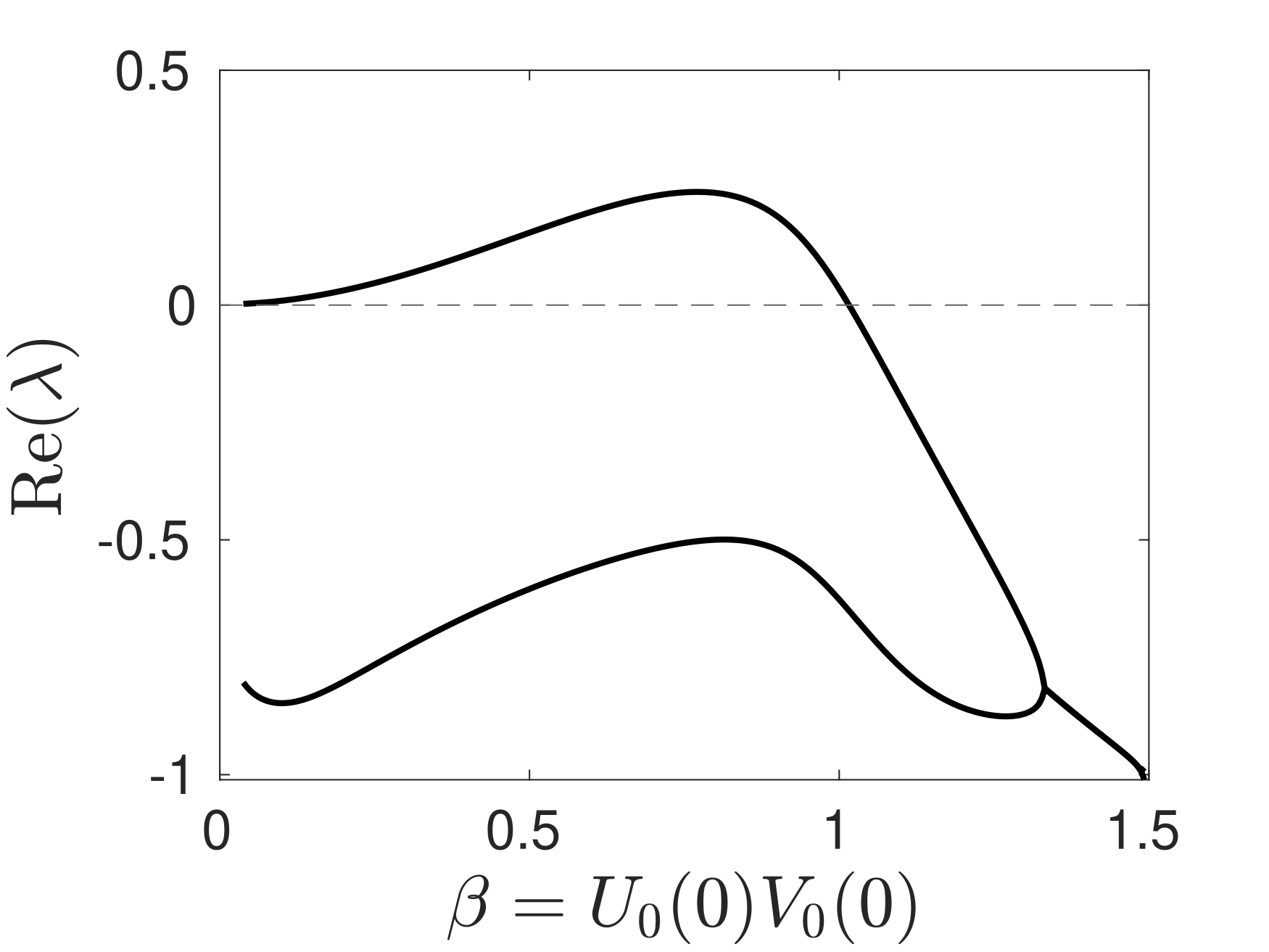}
        \caption{$\mbox{Re}(\lambda)$}
        \label{fig:sch_eig}
    \end{subfigure}
    \caption{Left panel: The dimple eigenfunction component $\Phi = V_{0\beta}$ at the saddle-node bifurcation point $B = B_c \approx 1.347$ that corresponds to $\lambda = 0$. Right panel: $\mbox{Re}(\lambda)$ versus $\beta \equiv U_0(0) V_0(0)$ for the numerically computed dominant eigenvalues of \eqref{schnak:eig_prob}. We observe that $\mbox{Re}(\lambda) < 0$ along the primary solution branch where $\beta_c<\beta<1.5$ with $\beta_c\approx 1.015$. In contrast, the lower branch in the left panel of Fig.~\ref{fig:bif_core}, corresponding to the range $\beta<\beta_c$, is unstable owing to a real positive eigenvalue for \eqref{schnak:eig_prob}.}
    \label{fig:sch_dimple_eig}
\end{figure}

\subsection{Asymptotic construction of quasi-equilibria: The outer solution}\label{sec:sch_equil_out}

To determine $B$ in terms of the parameters of \eqref{Sch_core}, we must match the far-field behavior of the inner solution to an outer solution valid on $\mathcal{O}(\varepsilon_L) \ll |x|<l$. By symmetry, we can extend this outer solution to be even on $- \ell < x < \ell$. Our focus will be to match the inner and outer solutions along the linearly stable primary branch of Fig.~\ref{fig:bif_core} as $B$ is varied on $0 < B < B_c$.

In the outer region, the outer solution for \eqref{Sch_core} on $0^+ < x < \ell$ satisfies
\begin{subequations}
    \begin{align}
        - v +  a + u v^2 &= 0 \,, \label{Sch_out3}
        \\
        D_L u_{xx} + b - u v^2 &= 0 \, , \quad u_x(\ell) = 0 \, . \label{Sch_out4}
    \end{align}
\end{subequations}
From \eqref{Sch_out3}, we obtain
\begin{equation}\label{Sch_utov}
    u = \frac{(v - a)}{v^2}, \quad \mbox{for} \quad v > a \, ,
\end{equation}
which yields
\begin{equation}\label{Sch_ux2}
    u_x = \frac{(2 a - v)}{v^3} \, v_x \, .
\end{equation}
Moreover, by using \eqref{Sch_utov} and \eqref{Sch_ux2} in \eqref{Sch_out4} we obtain
\begin{subequations}\label{Sch_fullouter}
    \begin{equation}\label{Sch_nonlinear}
        D_L \, \left(g(v) \, v_x\right)_x = R_{s}(v) \, , \quad
        0^+ < x < \ell \, ; \quad v_x(\ell) = 0 \, ,
    \end{equation}
    where
    \begin{equation}
          g(v) \equiv \frac{(2 a - v)}{v^3} \, , \qquad R_s(v) \equiv v - a - b \, . \label{Sch_fR}
    \end{equation}
\end{subequations}
For the well-posedness of \eqref{Sch_fullouter} we need $u > 0$ and $u_x > 0$ on $0^+ < x < \ell$, which requires that $a < v(x) < 2 a$ on $0^+ < x < \ell$.

To derive the matching condition between the inner and outer solutions we first use \eqref{sch:inner}, \eqref{sch:expan}, \eqref{sch:far_u0} and \eqref{sch:far_u1} to obtain that the outer solution must satisfy
\begin{equation} \label{sch:match_u}
    u(x) \sim \frac{B x}{\sqrt{D_L}} + \frac{C_s \varepsilon}{\sqrt{D}} \, , \qquad v(x) \sim a + o(1) \, , \quad \mbox{as} \quad x \to 0^+ \, ,
\end{equation}
where $C_s = C_s(B)$. By using \eqref{Sch_ux2} and \eqref{Sch_fR}, and calculating $u_x(0^+)$ from \eqref{sch:match_u}, we conclude that
\begin{equation}\label{sch:match_1}
    \lim_{x \to 0^+} u_x =  \lim_{x \to 0^+} g(v) v_x = \frac{B}{\sqrt{D_L}} \, .
\end{equation}
Then, by using \eqref{Sch_utov} and \eqref{sch:match_u}, we obtain
\begin{equation}\label{sch:match_1.5}
    u(0^+) =  \frac{v(0^+) - a}{\left[v(0^+)\right]^2} \sim \frac{C_s \varepsilon}{\sqrt{D}} \, ,
\end{equation}
which can be solved for $\varepsilon \ll 1$ to obtain
\begin{equation} \label{sch:match_2}
    v(0^+) \sim a + a^2 C_{s} \frac{\varepsilon}{\sqrt{D}} \, .
\end{equation}
Here $C_s = C_s(B)$ is defined from the far-field of the core problem \eqref{sch:far_u0}. Since $C_s(B) > 0$, as shown in Fig.~\ref{fig:Sch_Cs}, we conclude from \eqref{sch:match_2} that $v(0^+) > a$.

We now discuss the solvability of the outer problem \eqref{Sch_fullouter} subject to the two matching conditions \eqref{sch:match_1} and \eqref{sch:match_2}, which will determine $B$. We first require the following lemma:

\begin{lemma}\label{lemma:sch}
    Suppose that $a < b$. Then, on the range of $x$ where $a < v(x) < 2 a$, we have $R_s(v) < 0$ and, consequently, $dv/dx > 0$. 
\end{lemma}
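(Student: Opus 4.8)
The plan is to separate the statement into its two assertions: first the sign of $R_s$, which is purely algebraic, and then the monotonicity of $v$, which I would extract from the structure of the outer equation \eqref{Sch_nonlinear} together with its boundary condition.

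First I would establish $R_s(v) < 0$. From the definition $R_s(v) = v - a - b$ in \eqref{Sch_fR}, and using that we are on the range where $v < 2a$, I obtain $R_s(v) = v - a - b < 2a - a - b = a - b$. Since the lemma hypothesizes $a < b$, the right-hand side is negative, so $R_s(v) < 0$ throughout. This step uses only the upper bound $v < 2a$ together with the assumption $a < b$.

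The crux of the second assertion is to notice that the flux $g(v)\,v_x$ appearing in \eqref{Sch_nonlinear} is exactly $u_x$, by \eqref{Sch_ux2}. Thus \eqref{Sch_nonlinear} can be read as $D_L\, u_{xx} = R_s(v)$, and the first step gives $u_{xx} < 0$ on the interval, so that $u_x$ is strictly decreasing there. Invoking the Neumann condition $u_x(\ell) = 0$ from \eqref{Sch_out4}, strict monotonicity forces $u_x(x) > u_x(\ell) = 0$ for every $x$ with $0^+ < x < \ell$.

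Finally I would convert the sign of $u_x$ into that of $v_x$. On the admissible range $a < v < 2a$ the factor $g(v) = (2a - v)/v^3$ is strictly positive, so writing $v_x = u_x / g(v)$ and combining the positivity of $u_x$ and of $g(v)$ yields $dv/dx > 0$, completing the argument. I anticipate no genuine obstacle: the whole proof is a one-dimensional monotonicity principle anchored at $x = \ell$. The only point demanding care is that the sign conclusion for $u_x$ relies on $a < v < 2a$ persisting all the way up to $x = \ell$, so that the boundary condition can legitimately be applied; this is precisely the well-posedness requirement already imposed on \eqref{Sch_fullouter}.
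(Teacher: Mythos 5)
Your proposal is correct and follows essentially the same route as the paper: the sign of $R_s$ is checked algebraically on $a<v<2a$ using $a<b$, and the monotonicity of $v$ is obtained by integrating \eqref{Sch_nonlinear} from $x$ to $\ell$ (equivalently, noting $u_{xx}<0$) and anchoring at the Neumann condition $u_x(\ell)=0$, then dividing by the positive factor $g(v)$. Your explicit remark that $a<v<2a$ must persist on the whole interval is the same well-posedness assumption the paper uses implicitly.
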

\begin{proof}
    If $a < b$, we observe from \eqref{Sch_fR} that $R_s(a) = - b < 0$, $R_s(2 a) = a - b < 0$ and $dR_s/dv > 0$. Therefore, $R_s< 0$ on the range $a < v < 2 a$. Next, upon integrating \eqref{Sch_nonlinear}, and imposing $v_x(\ell) = 0$, we obtain on $0 < x < \ell$ that
    \begin{equation}\label{lemma:sch_eq}
        D_L \left[g(v) v_x\right]\vert_x^\ell = - D_L g(v) v_x =
        \int_x^\ell R_s\left[v(\eta)\right] \, d\eta < 0 \, ,
    \end{equation}
    whenever $a < v(x) < 2 a$. Since $g(v) > 0$ for $a < v < 2 a$, we conclude that $v_x > 0$ when $a < v(x) < 2 a$.
\end{proof}
Next, we multiply \eqref{Sch_nonlinear} by $g(v) v_x$ and integrate. Upon using the monotonicity of $v(x)$ on $a < v < 2 a$ together with $v_x(\ell) = 0$, we obtain
\begin{equation}
    \frac{D_L}{2} \, \left[g(v) \, v_x\right]^2 = \int_{\mu}^{v(x)} \, R_{s}(\xi) \, g(\xi) \, d\xi \, , \label{sch:integral}
\end{equation}
where we have labeled the boundary value as $\mu \equiv v(l)$. Setting $x = 0^+$ in \eqref{sch:integral}, and using \eqref{sch:match_1}, we conclude that
\begin{equation}\label{sch:integral_B}
    B^2 = - 2 \int_{v(0^+)}^{\mu} \, R_s(\xi) \, g(\xi) \, d\xi \, ,
\end{equation}
where $v(0^+) \sim a$, but with $v(0^+) > a$ given in \eqref{sch:match_2}. Since $R_s < 0$ when $a < \mu = v(\ell) < 2 a$, we observe that the right-hand side of \eqref{sch:integral_B} is positive on this range.

To proceed further, we define $\mathcal G_s^{\prime}(\xi)$ by
\begin{equation}\label{Gprime_sch}
    \mathcal G_s^{\prime}(\xi)\equiv - R_s(\xi) \, g(\xi) =
    \frac{2 a (a + b)}{\xi^3} - \frac{(3 a + b)}{\xi^2} + \frac{1}{\xi} > 0 \, , \quad \mbox{on} \quad a < \xi < 2a \, ,
\end{equation}
when $a < b$. A first integral of \eqref{Gprime_sch} is given by
\begin{equation}\label{G_Sch}
    \mathcal G_s(\xi) = - \frac{a (a + b)}{\xi^2} + \frac{(3 a + b)}{\xi} + \log{\xi} \, .
\end{equation}
In this way, \eqref{sch:integral_B} reduces to the explicit relation
\begin{equation}\label{sch:integral_Bfinal}
    B^2 = 2 \left[\mathcal G_s(\mu) - \mathcal G_s(v(0^+)\right] \, , \quad \mbox{with} \quad v(0^+) \sim a + a^2 C_s \frac{\varepsilon}{\sqrt{D}} \, .
\end{equation}
Since $\mathcal G_s^{\prime}(\mu) > 0$ on $a < \mu < 2 a$, it follows that $B$ is an increasing function of $\mu = v(\ell)$.

By Lemma \ref{lemma:sch}, on the range of $x$ where $a < v < 2 a$, we use \eqref{Gprime_sch} in \eqref{sch:integral} to obtain that
\begin{equation} \label{vx_sch}
    g(v) \, v_x = \sqrt{\frac{2}{D_L}} \, \sqrt{\mathcal G_s(\mu) - \mathcal G_s(v(x))} \, .
\end{equation}
By integrating this separable ODE, we get an implicit relation for $v(x)$ on $0^+ < x < \ell$ given by
\begin{equation}\label{sepa_Sch}
    \chi_s\left[v(x)\right] = \sqrt{\frac{2}{D_L}} \, x \, , \qquad \mbox{where} \qquad \chi_s\left[v(x)\right] \equiv \int_{v\left(0^+\right)}^{v(x)} \frac{g(\xi)}{\sqrt{\mathcal G_s(\mu) - \mathcal G_s(\xi)}} \, d\xi \, .
\end{equation}
Then, by setting $x = \ell$ and  $v(\ell) = \mu$ in \eqref{sepa_Sch}, we obtain an implicit equation for $\mu$ given by
\begin{equation}\label{chi2_Sch}
    \chi_s(\mu) = \int_{v\left(0^+\right)}^\mu \frac{g(\xi)}
    {\sqrt{\mathcal G_s(\mu) - \mathcal G_s(\xi)}} \, d\xi =
    \sqrt{\frac{2}{D_L}} \ell \, ,
\end{equation}
where $v(0^+)$, with $v(0^+) > a$ given by \eqref{sch:match_2}.

Since the integral in \eqref{sepa_Sch} is improper at $\xi = \mu$, to obtain a more tractable formula for $\chi_s(\mu)$ we integrate the expression in \eqref{chi2_Sch} by parts by using $g(\xi) = -\mathcal G_s^{\prime}(\xi)/R_s(\xi)$. This yields the proper integral
\begin{equation}\label{properchi_sch}
    \chi_s(\mu) = - 2 \frac{\sqrt{\mathcal G_s(\mu) - \mathcal G_s(v(0^+))}}{R_s(v(0^+))} + 2 \int_{v(0^+)}^\mu \, \frac{\sqrt{\mathcal G_s(\mu) - \mathcal G_s(\xi)}}{\left[R_s(\xi)\right]^2} \, R_s^{\prime}(\xi) \, d\xi \, ,
\end{equation}
where $R_s(\xi) = \xi - a - b$ and $\mathcal G_s(\xi)$ is defined in \eqref{G_Sch}. On the range $\mu > v(0^+)$, we observe that $\chi_s(\mu)$ is positive since $R_s(\xi) < 0$ and $R_s^{\prime}(\xi) > 0$ on $a < \xi < 2 a$. Moreover, by differentiating \eqref{properchi_sch} we conclude that
\begin{equation}\label{properchip_sch}
    \chi_s^{\prime}(\mu) = \frac{g(\mu)}{\sqrt{\mathcal G_s(\mu) - \mathcal G_s(v(0^+))}} \frac{R_s(\mu)}{R_s(v_0^+)} + \mathcal G_s^{\prime}(\mu) \int_{v(0^+)}^{\mu} \frac{R_s^{\prime}(\xi)}{\sqrt{\mathcal G_s(\mu) - \mathcal G_s(\xi)} \left[R_s(\xi)\right]^2} \, d\xi \, .
\end{equation}
Since $g(\xi) > 0$, $\mathcal G_s^{\prime}(\xi) > 0$ and $R_s^{\prime}(\xi) > 0$ on $a < \xi < 2 a$, we conclude that $\chi_s(\mu)$ is a monotone increasing function of $\mu$ on $v(0^+) < \mu < 2 a$ that reaches its maximum value at $\mu = 2 a$. As such, recalling that $\mu = v(\ell)$, we define
\begin{equation}\label{sch:xmax}
    \chi_{s, \max} \equiv \chi_s(2 a) \, , \quad \mu_{\max} \equiv 2a \, .
\end{equation}
We summarize our asymptotic construction of a one-spike solution to \eqref{Sch_core} as follows:

\begin{result}
On the range $0 < a < b$, our asymptotic construction of a one-spike solution to \eqref{Sch_core} on $|x| \leq \ell$ when $\varepsilon_L \ll 1$ reduces to solving the coupled nonlinear algebraic system
\begin{equation}\label{sch:nas}
    \chi_s(\mu) = \sqrt{\frac{2}{D_L}} \ell \, , \qquad
    B^2 = 2 \left[\mathcal G_s(\mu) - \mathcal G_s(v(0^+)\right]\,,
\end{equation}
for $B$ and $\mu = v(\ell)$ in terms of the parameters $\ell/\sqrt{D_L}$, $a$, $b$ and $\varepsilon/\sqrt{D}$. Here, $v(0^+)$ is given in \eqref{sch:match_2}, while $\chi_s(\mu)$ and $\mathcal G_s(\xi)$ are defined in \eqref{properchi_sch} and \eqref{G_Sch}, respectively. For a $K$-spike solution on $- 1 \leq x \leq 1$, we identify that $\ell = 1/K$.
\end{result}

Our asymptotic theory predicts that spike self-replication will occur for the range of the parameter $a$ where \eqref{sch:nas} has a solution with $B > B_c \approx 1.347$ but $\mu < 2 a$. On such a range for $a$, we predict that spike-replication occurs when
\begin{equation}\label{sch:thresh_repD}
    D_L < D_{L, K}^{rep} \equiv \frac{2}{K^2 \left[\chi_s(\mu)\right]^2} \, .
\end{equation}
Here, $\mu < 2a$, is calculated numerically from solving \eqref{sch:integral_Bfinal} with $B = B_c\approx 1.347$. Equivalently, in terms of the domain half-length $L$, we use $D_L={D/L^2}$ to predict that spike self-replication occurs when
\begin{equation}\label{sch:thresh_repL}
    L > L_K^{rep} \equiv \sqrt{\frac{D}{2}} K \chi_s(\mu) \, .
\end{equation}
Alternatively, spike nucleation near $x = \pm \ell$ is predicted to occur for the range of $a$ where \eqref{sch:integral_Bfinal} has a solution with $\mu = 2 a$ and $B < B_c \approx 1.347$. For this range of $a$, we predict that spike nucleation occurs when
\begin{equation}\label{sch:thresh_nucD}
    D_L < D_{L, K}^{nuc} \equiv \frac{2}{K^2 \left[\chi_{s, \max}\right]^2} \, , \quad \mbox{where} \quad   \chi_{s, \max} \equiv \chi_s(2 a) \, ,
\end{equation}
or, equivalently, when the domain half-length $L$ is sufficiently large so that
\begin{equation}\label{sch:thresh_nucL}
    L > L_K^{nuc} \equiv \sqrt{\frac{D}{2}} K \chi_{s, \max} \, .
\end{equation}

\begin{figure}[htbp]
    \centering
    \includegraphics[width=0.55\textwidth, height=5.0cm]{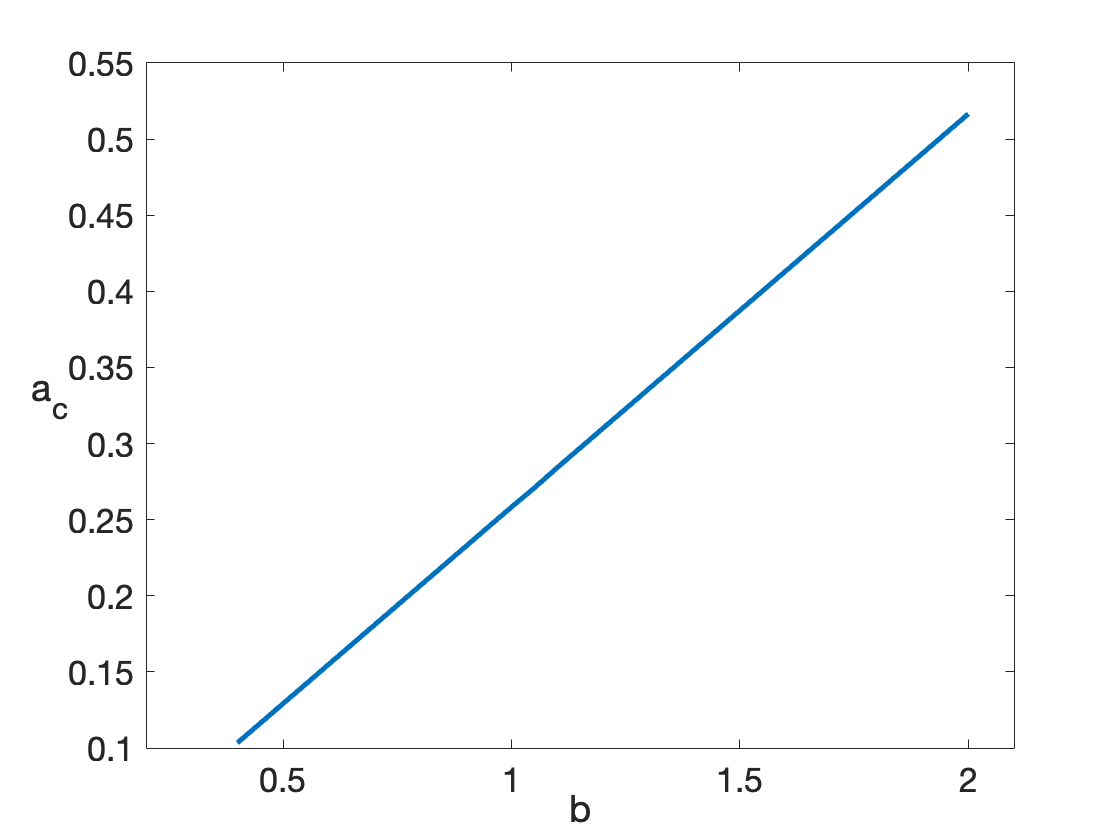}
    \caption{Plot of critical threshold $a = a_c$ versus $b$ separating whether spike self-replication or spike-nucleation will occur, as computed by setting $B=B_c\approx 1.347$ and $\mu = 2 a$ in \eqref{sch:integral_Bfinal}. Parameters: $\varepsilon = 0.01$ and $D = 2$.}
    \label{fig:sch:ac_vs_b}
\end{figure}

The thresholds in \eqref{sch:thresh_repD}--\eqref{sch:thresh_nucL} are calculated by using Newton's method on the nonlinear algebraic system \eqref{sch:nas}. The critical value of $a$, denoted by $a_c$, that distinguishes whether spike-replication or spike nucleation behavior occurs first as $D_L$ is decreased, is obtained by numerically solving \eqref{sch:integral_Bfinal} for $a$, where we set $B = B_c\approx 1.347$ and $\mu = 2 a$. On the range $0 < a < a_c$ spike self-replication occurs, while spike nucleation occurs on the range $a_c < a < b$. Since the threshold $a_c$ is independent of $L$, we conclude that either only spike nucleation events or only spike self-replication events will occur as the domain half-length $L$ increases. In Fig.~\ref{fig:sch:ac_vs_b} we plot the numerically computed threshold $a_c$ versus $b$ for $\varepsilon = 0.01$ and $D = 2$. For the range of $b$ shown, $a_c$ is approximately linear in $b$. To obtain a good approximation for $a_c$, valid for $\varepsilon/\sqrt{D} \ll 1$, we simply set $B = B_c$, $\mu = 2 a$ and $v_0(0^+) = a$ in \eqref{sch:integral_Bfinal} and solve for $a$. By using \eqref{G_Sch} for $\mathcal G_s(\xi)$, we conclude, after some algebra, that
\begin{equation*}
    \frac{B_c^2}{2} \sim \mathcal G_s(2 a) - \mathcal G_s(a) = -\frac{3}{4} + \frac{b}{4 a} + \log{2} \, .
\end{equation*}
By solving for $a$, we get that $a_c=a_c(b)$ is given by
\begin{equation}\label{sch:ac_approx}
    a_c \sim \frac{b}{2 B_c^2 + 3 - 4 \log{2}} \, , \quad \mbox{with} \quad B_c \approx 1.347 \, ,
\end{equation}
which very closely approximates the transition curve shown in Fig.~\ref{fig:sch:ac_vs_b}.

\begin{figure}[h!tbp]
    \centering
    \begin{subfigure}[b]{0.48\textwidth}  
        \includegraphics[width=\textwidth, height=6.0cm]{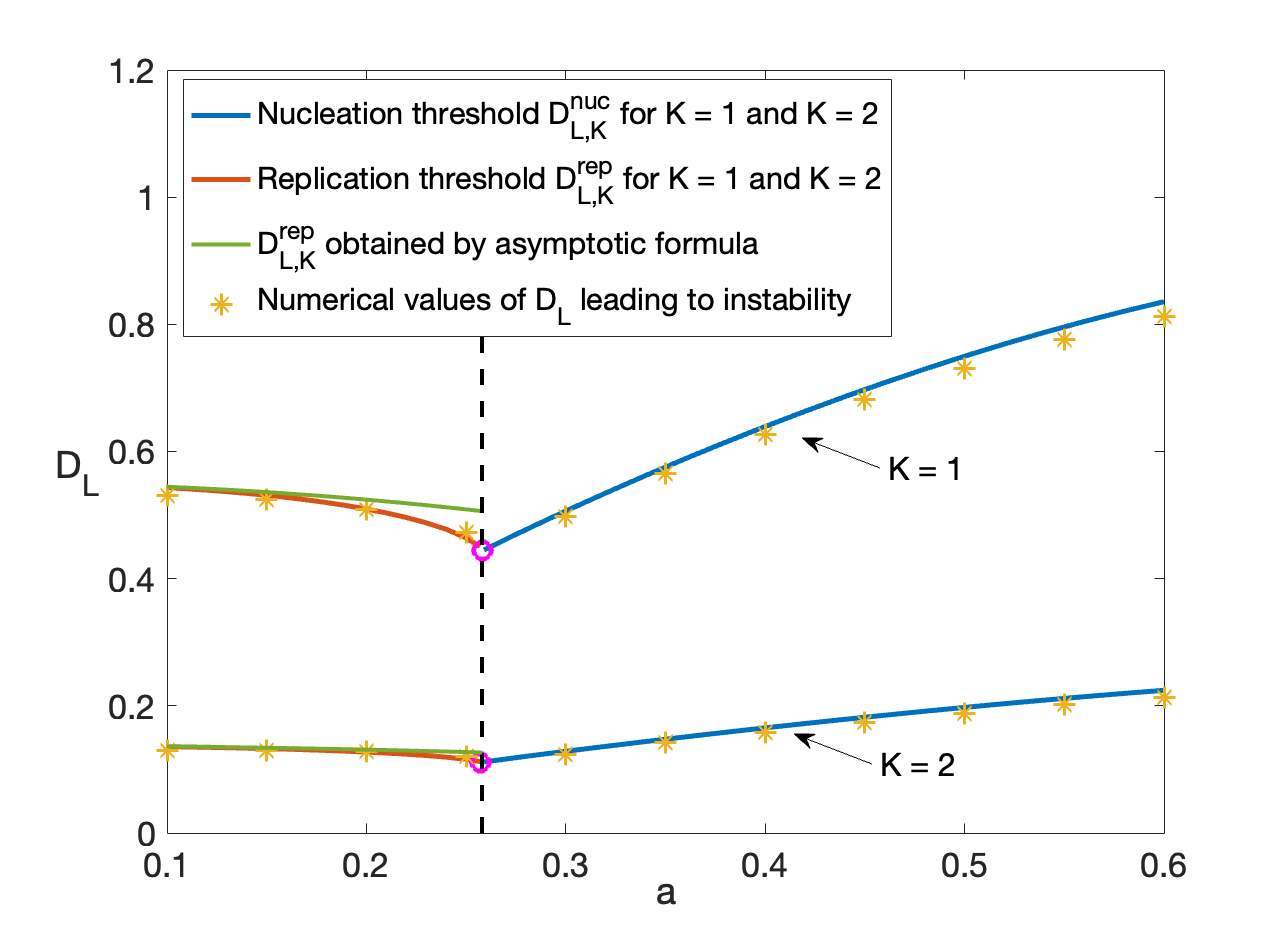}
        \caption{$D_{L, K}^{rep}$ and $D_{L, K}^{nuc}$}
        \label{fig:Sch_Dc}
    \end{subfigure}
    \begin{subfigure}[b]{0.48\textwidth}
        \includegraphics[width=\textwidth, height=6.0cm]{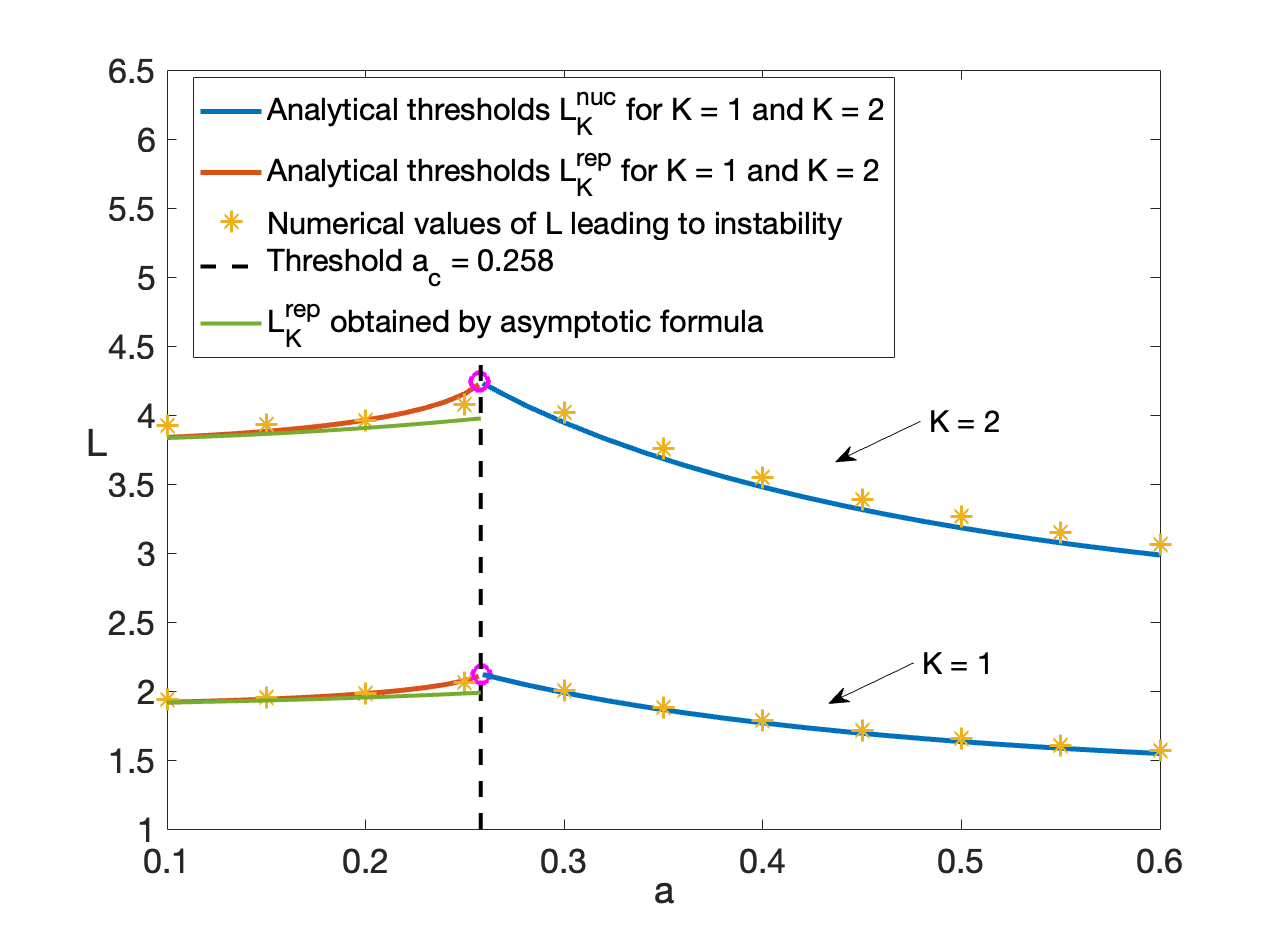}
        \caption{$L_K^{rep}$ and $L_K^{nuc}$}
        \label{fig:Sch_Lc}
    \end{subfigure}
    \caption{Left panel: Comparison between numerical and analytical results of the spike-replication threshold $D_{L, K}^{rep}$ and the spike nucleation threshold $D_{L, K}^{nuc}$ for the Schnakenberg model. The solid curves are the asymptotic results given in \eqref{sch:thresh_repD} and \eqref{sch:thresh_nucD}, based on the nonlinear algebraic system \eqref{sch:nas}. The yellow stars are obtained by full numerical simulations of the Schnakenberg model \eqref{Sch_core} using \textit{FlexPDE} \cite{flexpde2015}. The pink circle is the threshold $a_c \approx 0.258$. Parameters: $\varepsilon = 0.01$, $b = 1$. Right panel: Same plot but for the critical lengths $L_K^{rep}$ and $L_K^{nuc}$ when $D = 2$, given by the asymptotic results in \eqref{sch:thresh_repL} and \eqref{sch:thresh_nucL}, respectively. In both panels, the solid green curves are the closed-form asymptotic results in \eqref{sch:small_DLfinal} and \eqref{sch:small_Lfinal} for the self-replication thresholds valid for $a \ll 1$.}
    \label{fig:Sch_DcLc}
\end{figure}

For $b = 1$, $\varepsilon = 0.01$ and $D = 2$, and $K = 1, 2$, in Fig.~\ref{fig:Sch_Dc} we plot the spike self-replication threshold $D_{L, K}^{rep}$ and the spike nucleation threshold $D_{L, K}^{nuc}$ versus $a$, as computed from \eqref{sch:thresh_repD} and \eqref{sch:thresh_nucD}, respectively, as $a$ is varied. The computed threshold $a = a_c \approx 0.258$ for $b = 1$ (see Fig.~\ref{fig:sch:ac_vs_b}), separating whether spike self-replication or spike nucleation occurs, is identified in the plot. We conclude that, as $L$ increases, which corresponds to decreasing $D_L = D/L^2$ and $\varepsilon_L = \varepsilon/L$, spike self-replication owing to the non-existence of the core solution is the dominant instability when $a < a_c$. In contrast, on the range $a_c < a < b$, spike nucleation events owing to the non-existence of the outer solution are predicted to be the dominant instability as $D_L$ is decreased. As indicated by the yellow stars in Fig.~\ref{fig:Sch_Dc}, we show a very close agreement between our prediction of the two stability thresholds with the corresponding results computed from full numerical time-dependent simulations of \eqref{Sch_core} using \textit{FlexPDE} \cite{flexpde2015}. Setting $D = 2$, in Fig.~\ref{fig:Sch_Lc} we show a similar plot but in terms of the two critical length thresholds $L_K^{rep}$ and $L_K^{nuc}$ as defined in \eqref{sch:thresh_repL} and \eqref{sch:thresh_nucL}, respectively.

The global bifurcation diagrams shown below in \S \ref{sch:global} confirm that the critical values for the non-existence of the core solution or the non-existence of the outer solution correspond to saddle-node bifurcations for single-spike steady-state solution branches for the full Schnakenberg model \eqref{Sch_core}.

\subsection{Asymptotics of the outer solution for $a$ small}
\label{sec:sch_small_a}

For $a \ll 1$, we now derive an explicit analytical prediction for the spike self-replication threshold. In the limit $a \to 0$, spike nucleation behavior no longer occurs.

When $a \ll 1$, we obtain from \eqref{Sch_out3} that on $0^+ < x < \ell$,
\begin{equation}\label{sch:a-small-v}
    v(x) = a + \mathcal O(a^2) + \ldots \, .
\end{equation}
Upon substituting \eqref{sch:a-small-v} into \eqref{Sch_out4} and recalling the two matching conditions \eqref{sch:match_1} and \eqref{sch:match_1.5}, we obtain that $u(x)$ satisfies the following approximating linear problem for $a \ll 1$:
\begin{subequations}\label{sch_nonlinear_asmall}
    \begin{align}
        D_L \, u_{xx} &= - b + a^2 u \, , \quad 0^+ < x < \ell \, ; \quad u_x(\ell) = 0 \, ,
        \\
        u(0^+) &= C_s \frac{\varepsilon}{\sqrt{D}} \, , \quad
        u_x(0^+) = \frac{B}{\sqrt{D_L}} \, ,
    \end{align}
\end{subequations}
Here, $C_s = C_s(B)$ is obtained from the far-field \eqref{sch:far_u0} of the core problem \eqref{sch:rep_core}.

The solution to \eqref{sch_nonlinear_asmall} with $u_{x}(\ell) = 0$ after imposing the condition for $u(0^+)$ is given by
\begin{equation}\label{sch:v1-solve}
    u = \frac{b}{a^2} + \left(\frac{C_s \varepsilon}{\sqrt{D}} -
    \frac{b}{a^2}\right) \frac{\cosh\left[{a (\ell - |x|)/\sqrt{D_L}}\right]}{\cosh\left(a \ell/\sqrt{D_L}\right)} \, .
\end{equation}
Then, upon satisfying the condition for $u_x(0^+)$ in \eqref{sch_nonlinear_asmall}, we conclude that
\begin{equation}\label{sch:v1-B}
    B = \frac{b}{a} \left(1 - \frac{C_s a^2 \varepsilon}{b \sqrt{D}}\right) \tanh\left(\frac{a \ell}{\sqrt{D_L}}\right) \, .
\end{equation}
Moreover, upon solving \eqref{sch:v1-B} for $D_L$ we obtain, for $a \ll 1$, that
\begin{equation}\label{sch:small_DL1}
    D_L \sim a^2 \ell^2 \left[\tanh^{- 1}\left(\frac{a B}{b \left(1 - C_s {a^2 \varepsilon/(b \sqrt{D})}\right)}\right)\right]^{- 2} \, ,
\end{equation}
where $C_s = C_s(B)$. Setting $B = B_c \approx 1.347$ and $\ell = 1/K$ in \eqref{sch:small_DL1}, and neglecting the $\mathcal O\left(\varepsilon/\sqrt{D}\right)$ correction term, we conclude that the spike self-replication threshold $D_{L, K}^{rep}$ is approximated as
\begin{equation}\label{sch:small_DLfinal}
    D_{L, K}^{rep} \sim \frac{a^2}{K^2} \left[\tanh^{- 1}\left(\frac{a B_c}{b}\right)\right]^{- 2} \, , \quad \mbox{for} \quad a \ll 1 \, ,
\end{equation}
where $B_c \approx 1.347$. In terms of the domain half-length, the spike self-replication threshold is estimated as
\begin{equation}\label{sch:small_Lfinal}
    L_K^{rep} \sim \frac{\sqrt{D} K}{a} \tanh^{- 1}\left(\frac{a B_c}{b}\right) \, , \quad \mbox{for} \quad a \ll 1 \, .
\end{equation}
As indicated by the solid green curves in Fig.~\ref{fig:Sch_DcLc} the approximate thresholds \eqref{sch:small_DLfinal} and \eqref{sch:small_Lfinal} are moderately accurate only when $a$ is small.
  
\subsection{Global bifurcation diagrams}\label{sch:global}

\begin{figure}[htbp]
    \centering
    \includegraphics[width=0.99\textwidth, height=6.0cm]{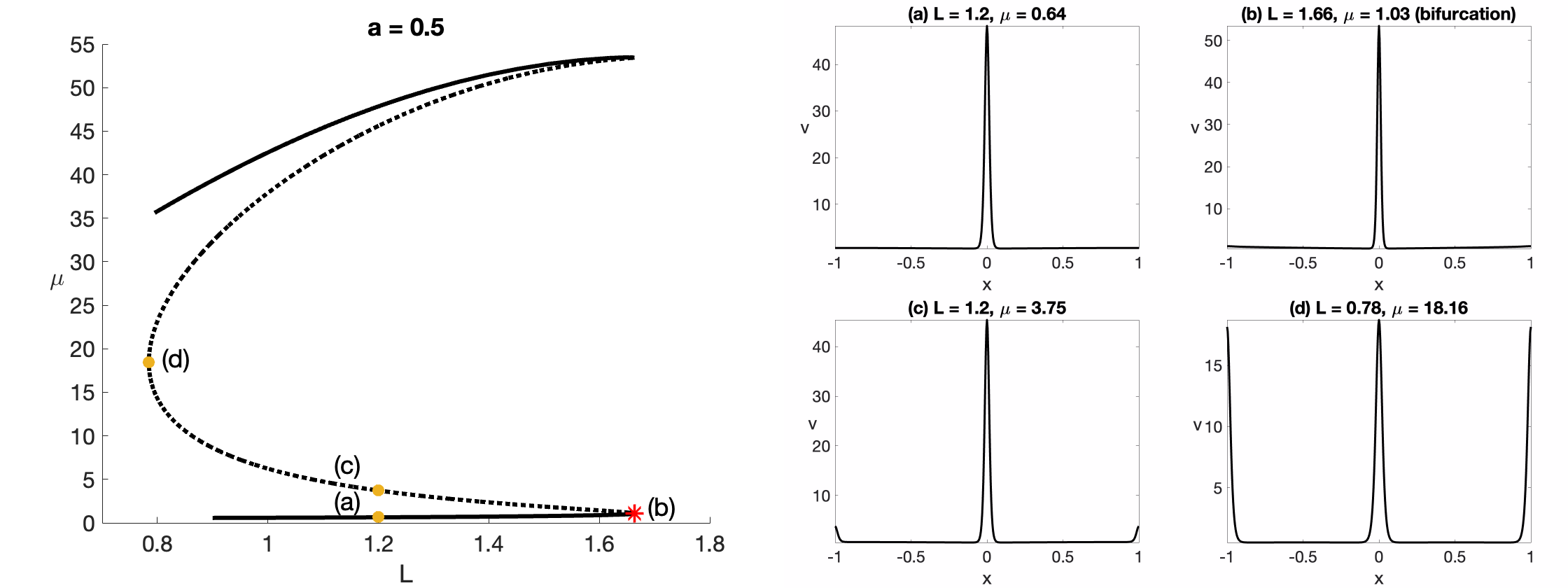}
    \caption{Left panel: Global bifurcation diagram of single-spike steady-states for the Schnakenberg model \eqref{Sch_core} as computed using \textit{pde2path} \cite{pde2path}, where $L$ is the bifurcation parameter and the vertical axis is $\mu = v(1)$. Parameters: $\varepsilon = 0.01$, $a = 0.5$, $b = 1$
    and $D = 2$. Since $a_c < a < b$, where $a_c = 0.258$, we predict that spike nucleation behavior occurs as $L$ is increased starting from point (a) on the lower branch. The red star marks a saddle-node point that is connected to an unstable middle branch that has an interior spike together with two emerging boundary spikes. Solid portions are linearly stable while dashed portions are unstable. Right panel: Spike profile $v(x)$ and bifurcation values (top of subfigures) at the indicated points in the left panel.}
    \label{fig:Sch_bif_nuc}
\end{figure}

Next, we numerically compute global bifurcation diagrams of single-spike steady-state solutions for \eqref{Sch_core} using \textit{pde2path} \cite{pde2path}, where $L$ is taken as the bifurcation parameter and we fix $\varepsilon = 0.01$, $b = 1$ and $D = 2$. When $a = 0.5$ is chosen on the range $a_c < a < b$, where $a_c \approx 0.258$, the global bifurcation diagram of single-spike steady-states shown in Fig.~\ref{fig:Sch_bif_nuc} provides a detailed view on how new spikes are created near the domain boundaries when $L$ approaches the saddle-node bifurcation point, which we denote by $L^{num}$. In this bifurcation diagram, the vertical axis in the left panel of Fig.~\ref{fig:Sch_bif_nuc} is the boundary value $\mu \equiv v(1)$. Solid and dashed portions of the branches indicate linearly stable and unstable solutions, respectively. In the right panel of Fig.~\ref{fig:Sch_bif_nuc} the solution $v(x)$ at the marked points in the left panel is shown. Starting from the bottom solution branch at point (a), this branch that has one interior spike ceases to exist as $L$ increases above $L^{num}$, where $L^{num} \approx 1.66$ as indicated by the red star. We emphasize that this saddle-node point is well-approximated by the value $L_1^{nuc}$ in \eqref{sch:thresh_nucL}, at which the outer solution ceases to exist (see Fig.~\ref{fig:Sch_Lc} for $a = 0.5$ and $K = 1$). Traversing the middle branch on Fig.~\ref{fig:Sch_bif_nuc} starting from the saddle-node point, we observe at point (c) that new boundary spikes are nucleated at the domain endpoints.

We remark that a normal form analysis for \eqref{Sch_core} valid near the saddle-node bifurcation at point (b) can be undertaken in a similar way as done in \cite{gai2024police,tse2016hotspot} for characterizing the nucleation of hotspots for RD systems of urban crime. A closely related normal analysis, and the existence of a dimple-shaped neutral eigenfunction for the linearization at the fold point, were first established in \S 3 of \cite{mesa_2007} in the context of analyzing the onset of self-replication of 1-D mesa patterns in RD systems.

In Fig.~\ref{fig:Sch_bif_rep} we use \textit{pde2path} \cite{pde2path} to obtain a global bifurcation diagram for single-spike steady-states of \eqref{Sch_core} for the value $a = 0.2 < a_c$, where we predict that spike self-replication will occur as $L$ is increased. In the left panel of Fig.~\ref{fig:Sch_bif_rep}, the vertical axis is now $u(0) v(0)$, which is a good measure of the spike amplitude. The computed saddle-node value is $L^{num} \approx 1.99$, which is well approximated by the critical threshold $L_1^{rep}$ in \eqref{sch:thresh_repL}, for which the core problem \eqref{sch:rep_core} with far-field behavior \eqref{sch:far_u0} has a saddle-node point when coupled to the outer solution (see Fig.~\ref{fig:Sch_Lc} for $a = 0.2$ and $K = 1$).

\begin{figure}[htbp]
    \centering
    \includegraphics[width=0.99\textwidth, height=6.0cm]{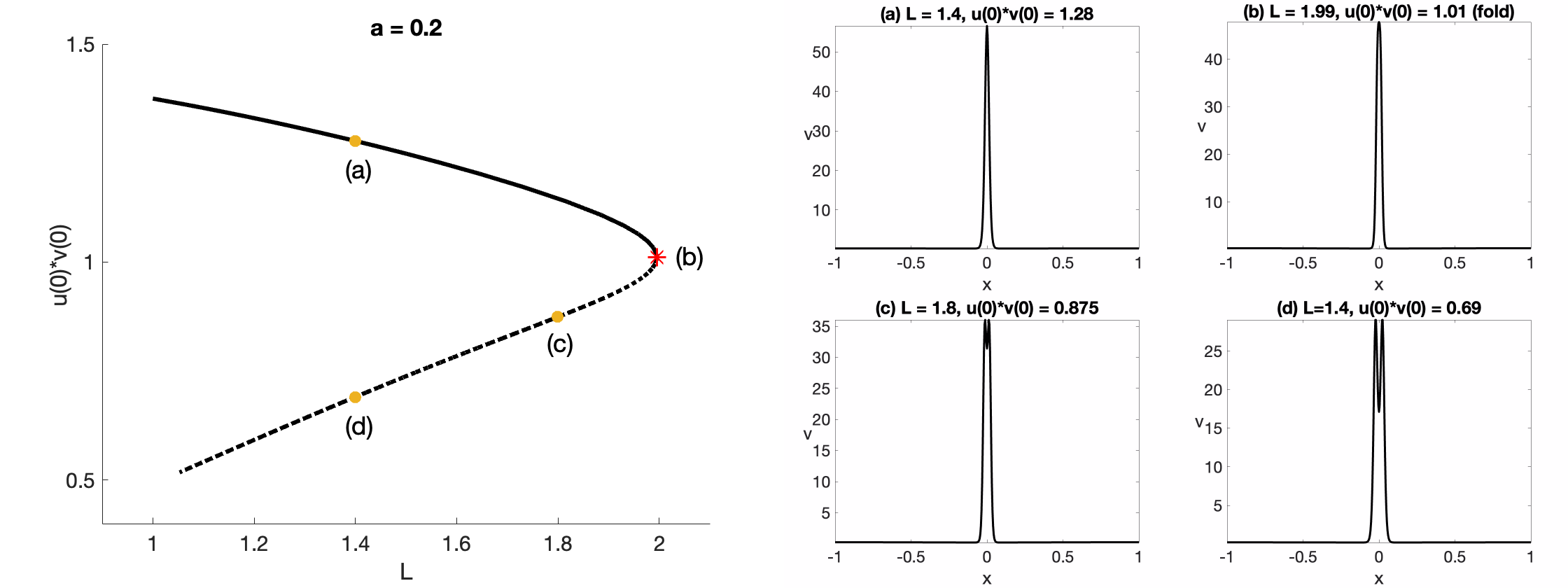}
    \caption{Left panel: Same plot and parameters as in Fig.~\ref{fig:Sch_bif_nuc} except that $a = 0.2 < a_c$. The vertical axis is now $u(0) v(0)$, which better measures the spike amplitude. Since $a < a_c$, we predict that spike self-replication will occur as $L$ is increased starting from point (a) on the upper branch. The red star marks a saddle-node point that is a signature for spike self-replication. The linearly stable upper branch is connected to an unstable lower branch where the spike profile has a volcano shape. Observe that the global bifurcation diagram has a qualitatively similar form to that for the core problem in Fig.~\ref{fig:bif_core}. Right panel: Spike profile $v(x)$ and bifurcation values (top of subfigures) at the indicated points in the left panel.}
    \label{fig:Sch_bif_rep}
\end{figure}

\subsection{Validation of asymptotic theory: Full PDE simulations}\label{sec:sh_fullpde}

\begin{figure}[htbp]
    \centering
    \includegraphics[width=0.99\textwidth, height=6.0cm]{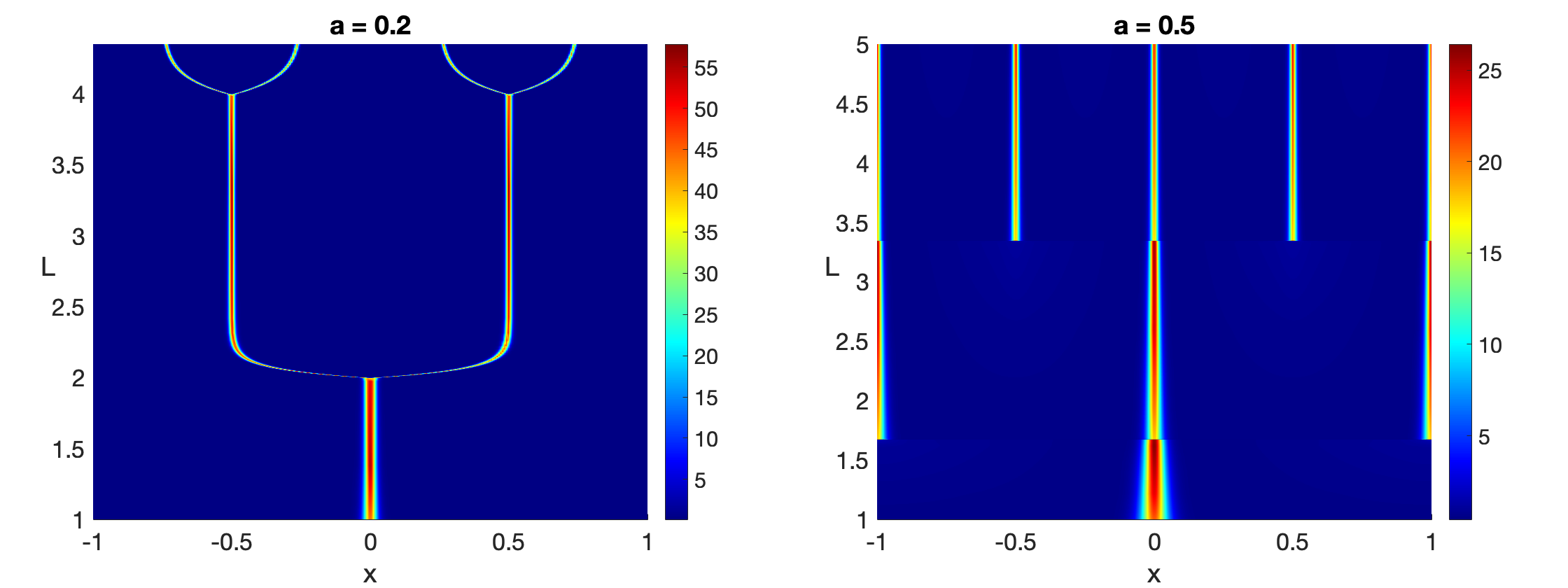}
    \caption{Time-dependent PDE simulations of \eqref{Sch_core} using \textit{FlexPDE} \cite{flexpde2015} illustrating the two spike generation mechanisms as the domain half-length $L$ slowly increases as $L = L_0 e^{\rho t}$ with $L_0 = 1$ and $\rho = 10^{- 4}$. A color plot of the magnitude of the solution $v$ is shown as a function of $L$, which is a proxy measure for time. Parameters: $\varepsilon = 0.01$, $b = 1$ and $D = 2$. Left panel: For $a = 0.2 < a_c \approx 0.258$, spike self-replication dynamics occurs as $L$ increases. Right panel: For $a = 0.5$, where $a_c < a < b = 1$, spike nucleation occurs as $L$ increases. Observe that the transient dynamics that appear immediately after a self-replication or nucleation event occurs on a much shorter time scale than the quasi-steady behavior described by our asymptotic theory.}
    \label{fig:Sch_flexpde_plot}%
\end{figure}

In Fig.~\ref{fig:Sch_flexpde_plot}, we show full-time-dependent PDE simulations of \eqref{Sch_core} computed using \textit{FlexPDE} \cite{flexpde2015} illustrating the two distinct types of spike generation mechanisms as the domain half-length $L$ slowly increases in time according to $L = L_0 e^{\rho t}$ with $L_0 = 1$ and $\rho = 10^{- 4}$. The vertical axes in Fig.~\ref{fig:Sch_flexpde_plot} is $L$, which is a proxy measure for time. In the left panel of Fig.~\ref{fig:Sch_flexpde_plot}, where $a = 0.2 < a_c = 0.258$, a single spike splits into two as $L$ increases above $L \approx 1.98$. Each of these new spikes splits again as $L$ further increases above $L \approx 3.96$. In contrast, in the right panel, where $a = 0.5 > a_c$, we observe that instead of having spike self-replication, new spikes now nucleate from quiescent regions. This nucleation behavior occurs twice: first at the domain boundaries when $L \approx 1.65$ and then later at the midpoint locations between an interior and a boundary spike as $L$ increases above $L \approx 3.26$. We emphasize that all of these numerically computed transition values of $L$ are well-approximated by the critical thresholds predicted in Fig.~\ref{fig:Sch_Lc} from our asymptotic theory.

\section{The Brusselator model}\label{sec:bruss}

We now perform a similar analysis for the Brusselator model \eqref{Brusselator_Lagrange} showing that, depending on the parameter range of $a$ and $f$, a single-spike pattern can either undergo a spike self-replication instability or a spike nucleation event as the domain half-length $L$ grows. In contrast to the analysis of the Schnakenberg model \eqref{Sch_core}, there has been no previous work in a 1-D setting on analyzing the possibility of spike self-replication behavior for the Brusselator \eqref{Brusselator_Lagrange}.

\subsection{Asymptotic construction of quasi-equilibria: The core problem}\label{sec:bruss_equil}

We first construct a quasi-steady-state spike solution for \eqref{Brusselator_Lagrange} on the canonical interval $|x| \leq \ell$ with a single spike centered at $x = 0$. A $K$-spike pattern on $[- 1, 1]$ is obtained by setting $\ell = 1/K$ as in \S \ref{sec:sch_equil}.

In the inner region near $x = 0$, we introduce the inner variables
$y$, $V$ and $U$ in
\begin{equation}\label{bruss:inner}
    y = \frac{x}{\varepsilon_L} \, , \qquad v = \frac{\sqrt{D_L}}{\varepsilon_L} \, V(y) \, , \qquad u = \frac{\varepsilon_L}{\sqrt{D_L}} \, U(y) \, .
\end{equation}
In terms of $V$ and $U$, the steady problem for \eqref{Brusselator_Lagrange} on $y \geq 0$ becomes
\begin{equation}\label{rep:bruss_main}
    V_{yy} - V + \frac{\varepsilon_L}{\sqrt{D_L}} \, a + f U \, V^2 = 0 \, , \qquad U_{yy} + V - U \, V^2 = 0 \, .
\end{equation}
We expand $U$ and $V$ as in \eqref{sch:expan} and substitute the expansion into \eqref{rep:bruss_main}, where we observe that the scaling relation \eqref{sch:scale} still holds. At leading order, we obtain the Brusselator {\em core problem}
\begin{equation}\label{bruss:rep_core}
    V_{0yy} - V_0 + f U_0 \, V_0^2 = 0 \,, \qquad U_{0yy} + V_0 - U_0 \, V_0^2 = 0 \, , \quad y \geq 0 \, ; \qquad
    U_{0y}(0) = V_{0y}(0) = 0 \, .
\end{equation}
We seek to path-follow a solution branch for \eqref{bruss:rep_core} that has a single local maximum in $V_0$ at $y = 0$. At the next order, the problem for $V_1$ and $U_1$ on $y \geq 0$ is given by
\begin{equation}\label{bruss:u1v1}
    V_{1yy} - V_1 + a + 2 f U_0 V_0 V_1 + f U_1 V_0^2 = 0 \, , \qquad U_{1yy} + V_1 - 2 U_0 V_0 V_1 - U_1 V_0^2 = 0 \, .
\end{equation}
We parameterize solution branches to \eqref{bruss:rep_core} in terms of $B$, which we now redefine as $B = \int_0^\infty \left(U_0 V_0^2 - V_0\right) \, dy$. Upon using \eqref{bruss:rep_core}, we can write $B$ equivalently as
\begin{equation}\label{bruss:B1}
    B \equiv \lim_{y \to \infty} U_{0y} = (1 - f) \int_0^\infty
    U_0 V_0^2 \, dy \, ,
\end{equation}
which shows that $B > 0$ when $0 < f < 1$. For \eqref{bruss:rep_core}, $V_0 \to 0$ exponentially as $y \to \infty$, while $U_0$ has the far-field behavior
\begin{equation}\label{bruss:far_u0}
    U_0 \sim B y + C_b + o(1) \, , \quad \mbox{as} \quad y \to \infty \, ,
\end{equation}
where the constant $C_b = C_b(B, f)$ must be computed numerically along each particular solution branch to \eqref{bruss:rep_core}. For \eqref{bruss:u1v1}, the far-field behavior is
\begin{equation}\label{bruss:far_u1}
    V_1 \sim a \, , \quad U_1 \sim - {a y^2/2} \, , \quad \mbox{as} \quad y \to \infty \, .
\end{equation}
The global bifurcation diagram of $\beta \equiv U_0(0) V_0(0)$ versus $B$ shown in the left panel of Fig.~\ref{fig:Bru_core_bif}, as computed by \textit{pde2path}, has qualitatively the same behavior at each fixed $f$ in $0 < f < 1$ as for the Schnakenberg model shown previously in the left panel of Fig.~\ref{fig:bif_core}. For each fixed $f$ in $0 < f < 1$, the upper linearly stable {\em primary} branch of the bifurcation diagram admits a single spike solution that disappears at a saddle-node bifurcation as $B$ is increased past some critical value $B_c = B_c(f)$. The numerical results for $B_c(f)$ given in Fig.~\ref{fig:Bru_core_bif_c}, show that $B_c$ decreases as $f$ increases. Along each lower branch in Fig.~\ref{fig:Bru_core_bif} the solution has an unstable volcano-shaped profile for $V_0$. For several fixed values of $f$, in Fig.~\ref{fig:Bru_Cb_vs_B} we show the numerically computed $C_b = C_b(B,f)$, defined in \eqref{bruss:far_u0}, along the corresponding upper branches in Fig.~\ref{fig:Bru_core_bif}. We observe that $C_b(B) > 0$.

\begin{figure}[htbp]
    \centering
    \begin{subfigure}[b]{0.48\textwidth}  
        \includegraphics[width=\textwidth, height=5.0cm]{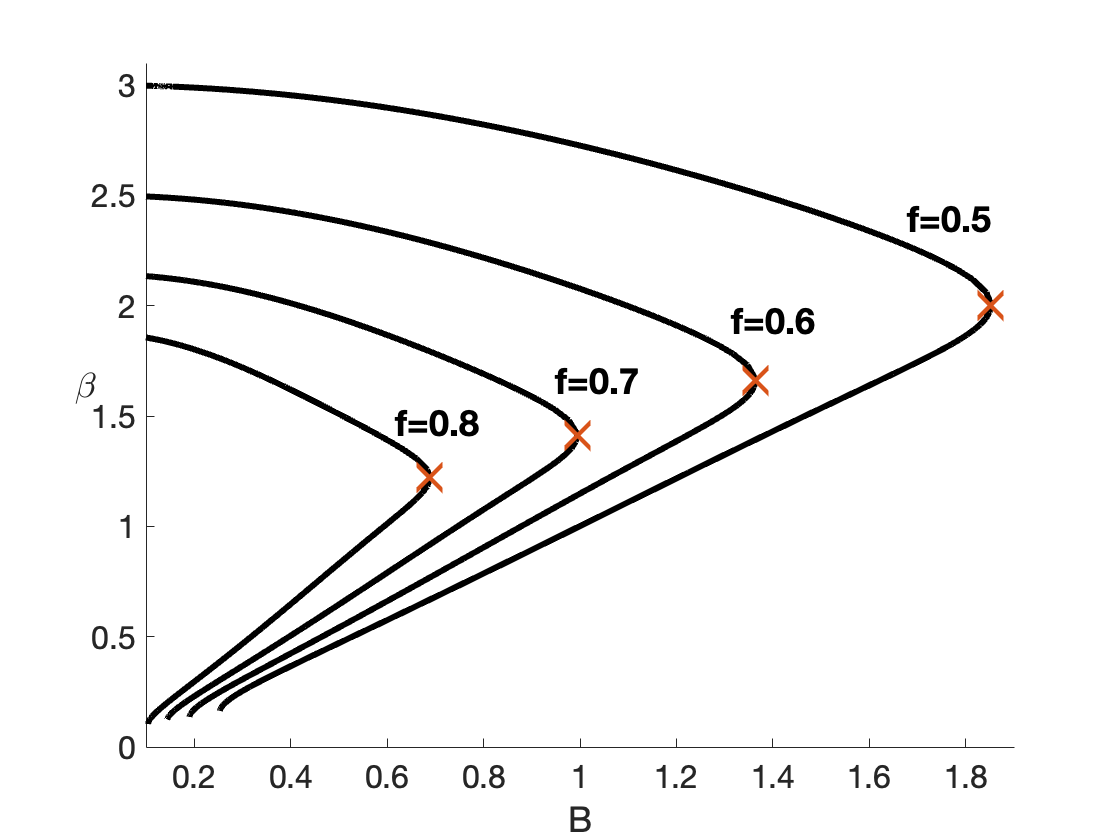}
        \caption{$U_0(0)V_0(0)$ versus $B$}
        \label{fig:Bru_core_bif}
    \end{subfigure}
    \begin{subfigure}[b]{0.48\textwidth}
        \includegraphics[width=\textwidth, height=5.0cm]{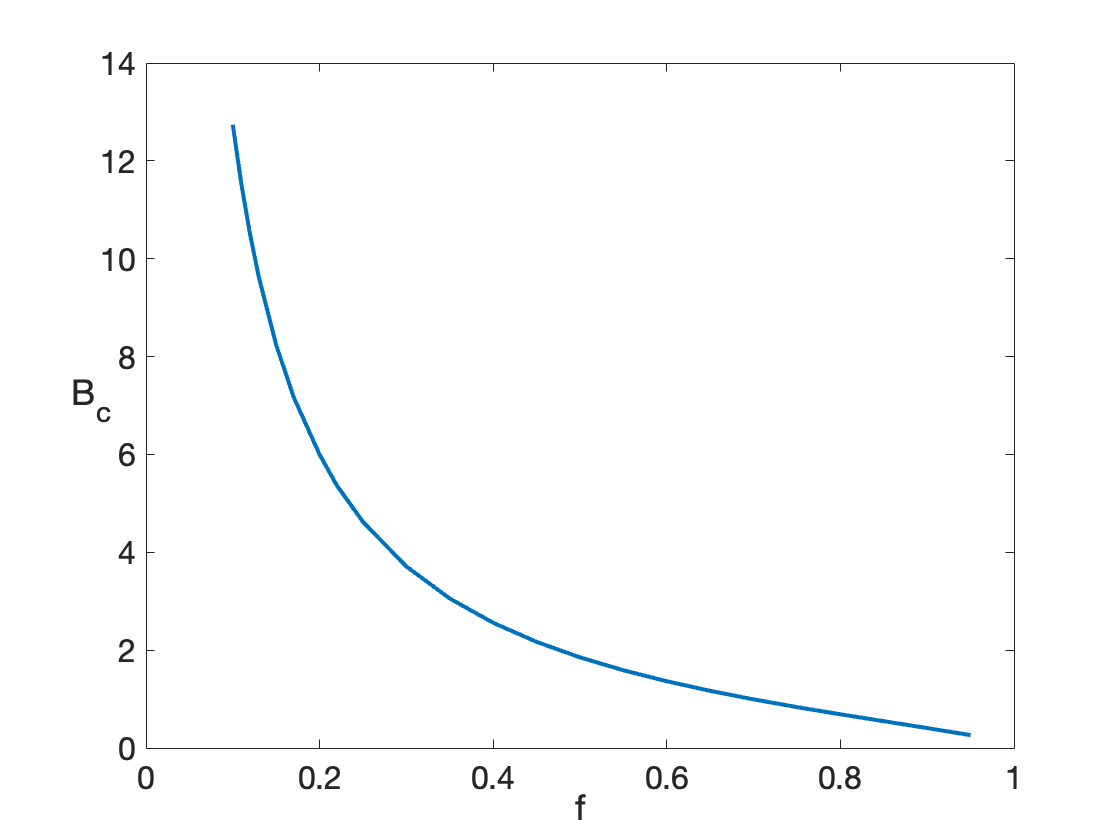}
        \caption{$B_c$ versus $f$}
        \label{fig:Bru_core_bif_c}
    \end{subfigure}
    \caption{Left panel: Bifurcation diagram of $\beta \equiv U_0(0) V_0(0)$ versus $B$ for the Brusselator core problem \eqref{bruss:rep_core} with \eqref{bruss:far_u0} for values of $f$ as indicated. The crosses correspond to the fold points $B = B_c$ near where spike self-replication is predicted. Right panel: Plot of fold points $B_c$ versus $f$ for the core problem.}
    \label{fig:Bru_core_bif_all}
\end{figure}

\begin{figure}[htbp]
    \centering
    \includegraphics[width=0.55\textwidth,height=5.0cm]{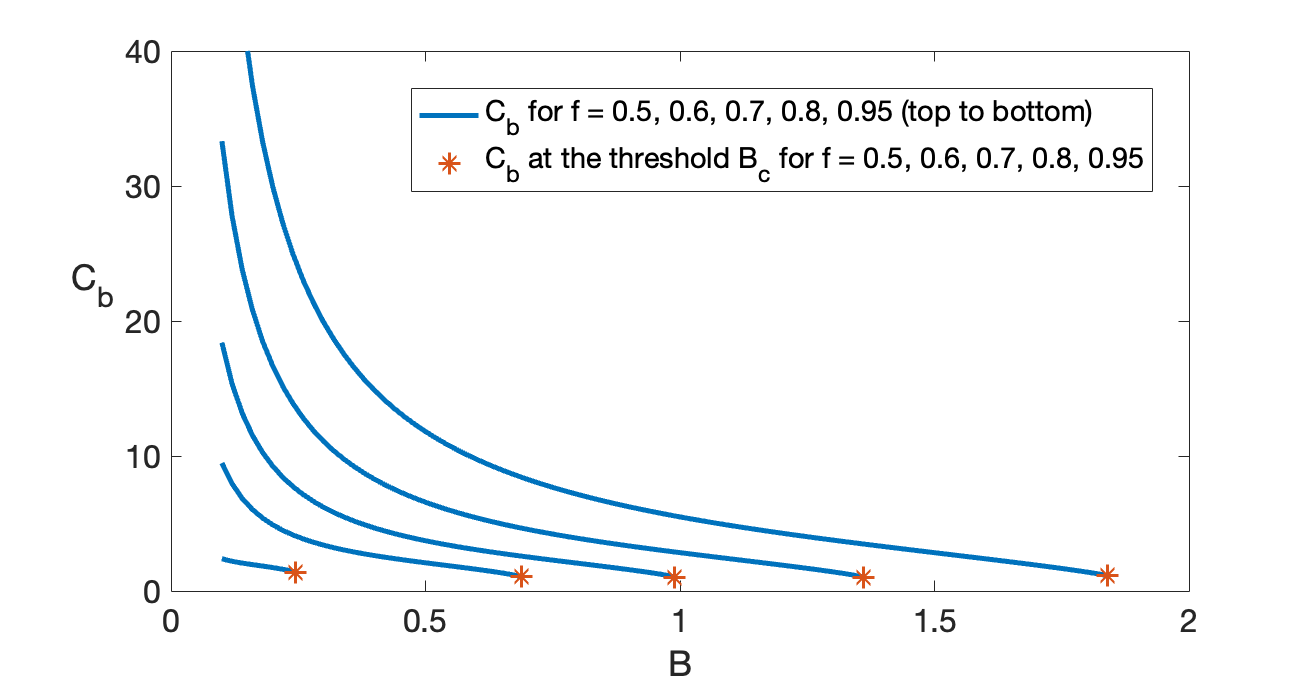}
    \caption{Numerically computed value of $C_b$ as $B$ varies on the upper (primary) branch of the bifurcation diagram shown in the left panel of Fig.~\ref{fig:Bru_core_bif} with a few fixed values of $f$. Note that $C_b > 0$ along the entire branch up to the fold point $B_c$.}
    \label{fig:Bru_Cb_vs_B}%
\end{figure}

To determine the linear stability of the two solution branches in Fig.~\ref{fig:Bru_core_bif} to $\mathcal O(1)$ time-scale instabilities, we must determine the dominant eigenvalues of \begin{subequations}\label{bruss:eig_prob}
    \begin{align}
        \Phi_{0yy} - \Phi_0 + 2f U_0 V_0 \Phi_0 + fV_0^2 N_0 &= \lambda \Phi \,; \qquad \Phi_{0y}(0) = 0 \,, \quad \Phi_0 \to 0 \quad \mbox{as} \quad y \to \infty \,,
        \\
        N_{0yy} + \Phi_0 - 2 U_0 V_0 \Phi_0 - V_0^2 N_0 &=0 \,; \qquad N_{0y}(0) = 0 \,, \quad N_{0y} \to 0 \quad \mbox{as} \quad y \to \infty\,,
    \end{align}  
\end{subequations}
where $U_0$ and $V_0$ satisfy the core problem \eqref{bruss:rep_core}. The derivation of \eqref{bruss:eig_prob} is very similar to that done in Appendix \ref{app:stab} for the Schnakenberg model and is omitted. For $f = 0.8$, in Fig.~\ref{fig:bruss_eig} we show that the primary branch in Fig.~\ref{fig:Bru_core_bif} is linearly stable, while the lower branch is unstable. At the saddle-node point $B = B_c \approx 0.685$ when $f = 0.8$, in Fig.~\ref{fig:bruss_dimple} we show that the neutral mode $\Phi_0 = V_{0\beta}$, corresponding to $\lambda = 0$, has a dimple shape. Qualitatively identical results hold for other values of $0 < f < 1$.

\begin{figure}[h!tbp]
    \centering
    \begin{subfigure}[b]{0.48\textwidth}  
        \includegraphics[width=\textwidth, height=6.0cm]{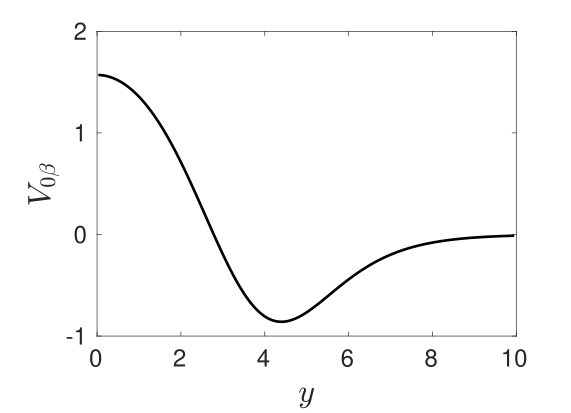}
        \caption{Dimple eigenfunction at the fold point}
        \label{fig:bruss_dimple}
    \end{subfigure}
    \begin{subfigure}[b]{0.48\textwidth}
        \includegraphics[width=\textwidth,height=6.0cm]{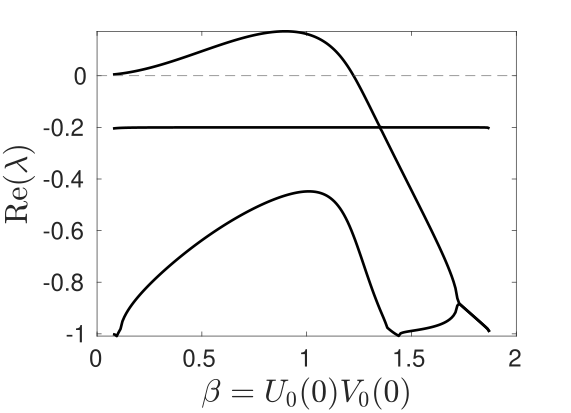}
        \caption{$\mbox{Re}(\lambda)$}
        \label{fig:bruss_eig}
    \end{subfigure}
    \caption{Left panel: The dimple eigenfunction component $\Phi = V_{0\beta}$ at the saddle-node bifurcation point $B = B_c \approx 0.685$ that corresponds to $\lambda = 0$ when $f = 0.8$. Right panel: $\mbox{Re}(\lambda)$ versus $\beta\equiv U_0(0)V_0(0)$ for the numerically computed dominant eigenvalues of \eqref{bruss:eig_prob} when $f = 0.8$. We observe that $\mbox{Re}(\lambda) < 0$ along the primary solution branch of Fig.~\ref{fig:Bru_core_bif} when $f = 0.8$. The lower branch is unstable due to a real positive eigenvalue for \eqref{bruss:eig_prob}.}
    \label{fig:bruss_dimple_eig}
\end{figure}

\subsection{Asymptotic construction of quasi-equilibria: The outer solution}\label{sec:bruss_equil_out}

Next, we show how to determine $B$ by matching the far-field behavior of the Brusselator core solution along the primary branch of an outer solution valid on $\mathcal O(\varepsilon_L) \ll |x| < l$.

In the outer region, the outer solution for \eqref{Brusselator_Lagrange} on $0^+ < x < \ell$ satisfies
\begin{subequations}
    \begin{align}
        - v +  a + f u v^2 &= 0 \, , \label{Bruss_out3}
        \\
        D_L u_{xx} + v - u v^2 &= 0 \, ,  \quad u_x(\ell) = 0 \, . \label{Bruss_out4}
    \end{align}
\end{subequations}
By solving for $u$ in \eqref{Bruss_out3}, we obtain that
\begin{equation}\label{Bruss_utov}
    u = \frac{(v - a)}{f v^2}, \quad \mbox{for} \quad v > a \, ,
\end{equation}
so that
\begin{equation}\label{Bruss_ux2}
    u_x = \frac{(2 a - v)}{f v^3} \, v_x \, .
\end{equation}
Upon substituting \eqref{Bruss_utov} and \eqref{Bruss_ux2} into \eqref{Bruss_out4}, we conclude that
\begin{subequations}\label{Bruss_fullouter}
    \begin{equation}\label{Bruss_nonlinear}
        D_L \, \left(g(v) \, v_x\right)_x = R_b(v) \, , \quad 0^+ < x < \ell \, ; \quad v_x(\ell) = 0 \, ,
    \end{equation}
    where
    \begin{equation}
        g(v) \equiv \frac{(2 a - v)}{v^3} \, , \qquad R_b(v) \equiv (1 - f) v - a \, . \label{Bruss_fR}
    \end{equation}
\end{subequations}
The outer problem \eqref{Bruss_fullouter} is well-posed when $u > 0$ and $u_x > 0$ on $0^+ < x < \ell$. This requires that $a < v(x)< 2 a$ on $0^+ < x < \ell$.

The derivation of the matching condition between the inner and outer solutions parallels that done in \S \ref{sec:sch_equil} and is omitted. We conclude that
\begin{equation}\label{bruss:match}
    f \lim_{x \to 0^+} u_x = \lim_{x \to 0^+} g(v) v_x = \frac{B f}{\sqrt{D_L}} \, , \quad v(0^+) \sim a + a^2 C_b f \frac{\varepsilon}{\sqrt{D}} \, , \quad
    u(0^+) \sim C_b \frac{\varepsilon}{\sqrt{D}} \, , 
\end{equation}
where the constant $C_b = C_b(B, f)$ must be computed from the core problem \eqref{bruss:rep_core} with far-field behavior \eqref{bruss:far_u0}. Since $C_b > 0$, as seen from Fig.~\ref{fig:Bru_Cb_vs_B}, we conclude from \eqref{bruss:match} that $v(0^+) > a$.

Next, we establish a key lemma analogous to that in Lemma \ref{lemma:sch}.

\begin{lemma}\label{lemma:bruss}
    Suppose that $1/2 < f < 1$. Then, on the range of $x$ where $a < v(x) < 2 a$, we have $R_b(v) < 0$ and, consequently, $dv/dx > 0$.
\end{lemma}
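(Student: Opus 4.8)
The plan is to imitate the proof of Lemma \ref{lemma:sch} almost verbatim, since the only structural change between the two outer problems is that the reaction term is now $R_b(v) = (1-f)v - a$ from \eqref{Bruss_fR} in place of $R_s$; the mobility factor $g(v) = (2a-v)/v^3$ is identical and is already known to be positive on $a < v < 2a$. Consequently the whole argument reduces to pinning down the sign of the affine function $R_b$ on that window, after which the monotonicity $dv/dx > 0$ follows from the same integrated energy estimate used before.

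First I would evaluate $R_b$ at the two endpoints of the admissible range. At $v = a$ one finds $R_b(a) = (1-f)a - a = -fa < 0$ since $f > 0$, and at $v = 2a$ one finds $R_b(2a) = (1 - 2f)a$, which is negative precisely when $f > 1/2$. This is the one point where the hypothesis $1/2 < f < 1$ is genuinely invoked: it is exactly the condition that keeps $R_b$ negative at the upper end of the window. Because $R_b'(v) = 1 - f > 0$ (here $f < 1$ enters), $R_b$ is strictly increasing, so $R_b(v) < R_b(2a) < 0$ throughout $a < v < 2a$, which establishes the first claim.

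For the consequence $dv/dx > 0$, I would integrate the outer equation \eqref{Bruss_nonlinear} from $x$ to $\ell$ and impose the Neumann condition $v_x(\ell) = 0$, obtaining $-D_L\, g(v) v_x = \int_x^\ell R_b[v(\eta)]\,d\eta$. By the sign result just established, the right-hand side is strictly negative wherever $a < v(\eta) < 2a$; since $g(v) > 0$ on the same range, I conclude $v_x > 0$ there, exactly paralleling the conclusion of Lemma \ref{lemma:sch}.

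There is essentially no obstacle: the lemma is a direct transcription of the Schnakenberg case, and the only real content is the elementary sign check that identifies $f > 1/2$ as the relevant threshold. The one thing worth stating carefully is self-consistency of the argument, namely that it presumes $a < v(x) < 2a$ throughout, which is precisely the well-posedness range flagged after \eqref{Bruss_fullouter}; the monotonicity conclusion should therefore be read as holding on the maximal subinterval where that two-sided bound is maintained, just as in Lemma \ref{lemma:sch}.
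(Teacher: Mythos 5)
Your proof is correct and follows essentially the same route as the paper: the same endpoint evaluations $R_b(a)=-fa<0$ and $R_b(2a)=a(1-2f)<0$, the same use of $R_b'=1-f>0$, and the same integration of \eqref{Bruss_nonlinear} from $x$ to $\ell$ (the paper simply cites \eqref{lemma:sch_eq} from Lemma \ref{lemma:sch} for this last step). No gaps.
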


\begin{proof}
    From \eqref{Bruss_fR} we have that $R_b(a) = - a f < 0$ and $R_b(2 a) = a (1 - 2 f) < 0$, since $f > 1/2$. Moreover, $dR_b/dv = (1 - f) > 0$, since $f < 1$. Therefore, $R_b < 0$ on the range $a < v < 2 a$, when $1/2 < f < 1$. The proof that $dv/dx > 0$ when $a < v(x) < 2 a$ follows from \eqref{lemma:sch_eq} as in Lemma \ref{lemma:sch} and is omitted.
\end{proof}

Since the remaining steps in the analysis to construct quasi steady-states parallels that in \S \ref{sec:sch_equil} we only highlight the key differences.

In place of \eqref{Gprime_sch}, we define $\mathcal G_b^{\prime}(\xi)$ by
\begin{equation}\label{Bprime_sch}
    \mathcal G_b^{\prime}(\xi) \equiv - R_b(s) \, g(s) = \frac{2 a^2}{\xi^3} - \frac{a (3 - 2 f)}{\xi^2} + \frac{(1 - f)}{\xi} > 0 \, ,  \quad \mbox{on} \quad a < \xi < 2a \, ,
\end{equation}
when $1/2 < f < 1$. A first integral of \eqref{Bprime_sch} is given by
\begin{equation}\label{G_B}
    \mathcal G_b(\xi) = - \frac{a^2}{\xi^2} + \frac{a (3 - 2 f)}{\xi} + (1 - f) \log{\xi} \, .
\end{equation}
With $v(0^+)$ as defined in \eqref{bruss:match}, in place of \eqref{sch:integral_Bfinal}, we obtain that
\begin{equation}\label{Bruss:integral_Bfinal}
    B^2 = \frac{2}{f^2} \left[\mathcal G_b(\mu) - \mathcal G_b
    (v(0^+)\right] \, , \quad \mbox{with} \quad v(0^+) \sim a +
    a^2 C_b f \frac{\varepsilon}{\sqrt{D}} \, .
\end{equation}
Since $\mathcal G_b^{\prime}(\mu) > 0$ on $a < \mu < 2 a$, we observe that $B$ is an increasing function of $\mu = v(\ell)$.

Next, we multiply \eqref{Bruss_nonlinear} by $g(v) v_x$ and integrate. Upon using the monotonicity of $v(x)$ for $a < v < 2 a$ when $1/2 < f < 1$, we obtain that $v(x)$ is now defined implicitly by
\begin{equation}\label{sepa_Bruss}
    \chi_b\left[v(x)\right] = \sqrt{\frac{2}{D_L}} \, x \, ,
\end{equation}
where, in place of \eqref{properchi_sch}, we now have that $\mu = v(\ell)$ is determined by
\begin{equation}\label{properchi_bruss}
    \chi_b(\mu) \equiv - 2 \frac{\sqrt{\mathcal G_b(\mu) - \mathcal G_b(v(0^+))}}{R_b(v(0^+))}+ 2 \int_{v(0^+)}^\mu \, \frac{\sqrt{\mathcal G_b(\mu) - \mathcal G_b(\xi)}}{\left[R_b(\xi)\right]^2} \, R_b^{\prime}(\xi) \, d\xi = \sqrt{\frac{2}{D_L}} \ell \, .
\end{equation}
Here, $R_b(\xi)$ and $\mathcal G_b(\xi)$ are defined in \eqref{Bruss_fR} and \eqref{G_B}, respectively. The positivity of $\chi_b^{\prime}(\mu)$ on $a < \mu < 2 a$ when $1/2 < f < 1$ follows, as in \eqref{properchip_sch}, since $R_b(\xi) < 0$, $R_b^{\prime}(\xi) > 0$ and $\mathcal G_b^{\prime}(\xi) > 0$ on $a < \xi < 2 a$. We conclude that the maximum value of $\mu = v(\ell)$ occurs when $\mu = 2a$, and we define \begin{equation}\label{bruss:xmax}
    \chi_{b, \max} \equiv \chi_b(2a) \, , \quad \mu_{\max} \equiv 2a \, .
\end{equation}
We summarize our asymptotic construction of a one-spike solution to the Brusselator \eqref{Brusselator_Lagrange} as follows:

\begin{result}
    On the range $1/2 < f < 1$, our asymptotic construction of a one-spike solution to \eqref{Brusselator_Lagrange} on $|x| \leq \ell$ when $\varepsilon_L \ll 1$ reduces to solving the coupled nonlinear algebraic system
    \begin{equation} \label{bruss:nas}
        \chi_b(\mu) = \sqrt{\frac{2}{D_L}} \ell \, , \qquad B^2 = \frac{2}{f} \left[\mathcal G_b(\mu) - \mathcal G_b(v(0^+)\right] \, ,
    \end{equation}
    for $B$ and $\mu = v(\ell)$ in terms of the parameters $\ell/\sqrt{D_L}$, $a$, $f$ and $\varepsilon/\sqrt{D}$: Here, $v(0^+)$, with $v(0^+) > a$, is given in \eqref{bruss:match}, while $\chi_b(\mu)$ and $\mathcal G_b(\xi)$ are defined in \eqref{properchi_bruss} and \eqref{G_B}, respectively. For a $K$-spike solution on $- 1 \leq x \leq 1$, we must set $\ell = 1/K$.
\end{result}

Spike self-replication will occur for the range of $f$ in $1/2 < f < 1$ where \eqref{bruss:nas} has a solution with $B > B_c(f)$ and $\mu < 2 a$. For this range of $f$, spike self-replication is predicted to occur when
\begin{equation} \label{bruss:thresh_repD}
    D_L < D_{L, K}^{rep} \equiv \frac{2}{K^2 \left[\chi_b(\mu)\right]^2} \, ,
\end{equation}
where $\mu < 2a$ is calculated from numerically solving \eqref{Bruss:integral_Bfinal} with $B = B_c(f)$. In terms of the domain half-length $L$, we predict that spike self-replication occurs when
\begin{equation}\label{bruss:thresh_repL}
    L > L_K^{rep} \equiv \sqrt{\frac{D}{2}} K \chi_b(\mu) \, .
\end{equation}
Alternatively, spike nucleation near $x = \pm \ell$ will occur for the range of $f$ where \eqref{Bruss:integral_Bfinal} has a solution with $\mu = 2 a$ and $B < B_c(f)$. For this range of $f$, spike nucleation is predicted to occur when
\begin{equation}\label{bruss:thresh_nucD}
    D_L < D_{L, K}^{nuc} \equiv \frac{2}{K^2 \left[\chi_{b, \max}\right]^2} \, , \quad \mbox{where} \quad \chi_{b, \max} = \chi_b(2 a) \, .
\end{equation}
In terms of $L$, we predict that spike nucleation occurs when
\begin{equation} \label{bruss:thresh_nucL}
    L > L_K^{nuc} \equiv \sqrt{\frac{D}{2}} K \chi_{b, \max} \, .
\end{equation}
The critical value $f_c = f_c(a)$ that separates whether spike-replication or spike nucleation behavior occurs is obtained by numerically solving \eqref{Bruss:integral_Bfinal} using Newton's method with $B = B_c(f)$ and $\mu = 2a$ in terms of $f$ (see Fig.~\ref{fig:Bru_core_bif_c}). Spike nucleation occurs for $1/2 < f < f_c(a)$, while spike self-replication occurs on the range $f_c(a) < f < 1$. To estimate $f_c$ when $\varepsilon/\sqrt{D} \ll 1$, we simply set $B = B_c(f)$, $\mu = 2a$ and $v(0^+) = a$ in \eqref{Bruss:integral_Bfinal} to get ${B_c^2 f^2/2} \sim \mathcal G_b(2 a) - \mathcal G_b(a)$, where $\mathcal G_b(\xi)$ is given in \eqref{G_B}. After some algebra, we conclude rather remarkably, for $\varepsilon/\sqrt{D} \ll 1$, that $f_c$ is independent of $a$ and is a root of
\begin{equation}\label{bruss:fc_eq}
    \frac{B_c^2 f^2}{2} = - \frac{3}{4} + f + (1 - f) \log{2} \, .
\end{equation}
By path-following the fold point of the core solution, the curve $B_c(f)$ in Fig.~\ref{fig:Bru_core_bif_c} can be estimated numerically. Then, by numerically solving \eqref{bruss:fc_eq} we estimate for any $a > 0$ that $f_c \approx 0.769 + \mathcal O\left(\varepsilon/\sqrt{D}\right)$.

In summary, we conclude that the transition threshold $f_c$ is independent of the domain half-length $L$ and, within a negligible $\mathcal O\left(\varepsilon/\sqrt{D}\right)$ error, is also independent of the parameter $a$. As a result, for any $a > 0$, as the domain grows either only spike nucleation or only spike self-replication is predicted to occur for the Brusselator.

\begin{figure}[h!tbp]
    \centering
    \begin{subfigure}[b]{0.48\textwidth}
        \includegraphics[width=\textwidth, height=6.0cm]{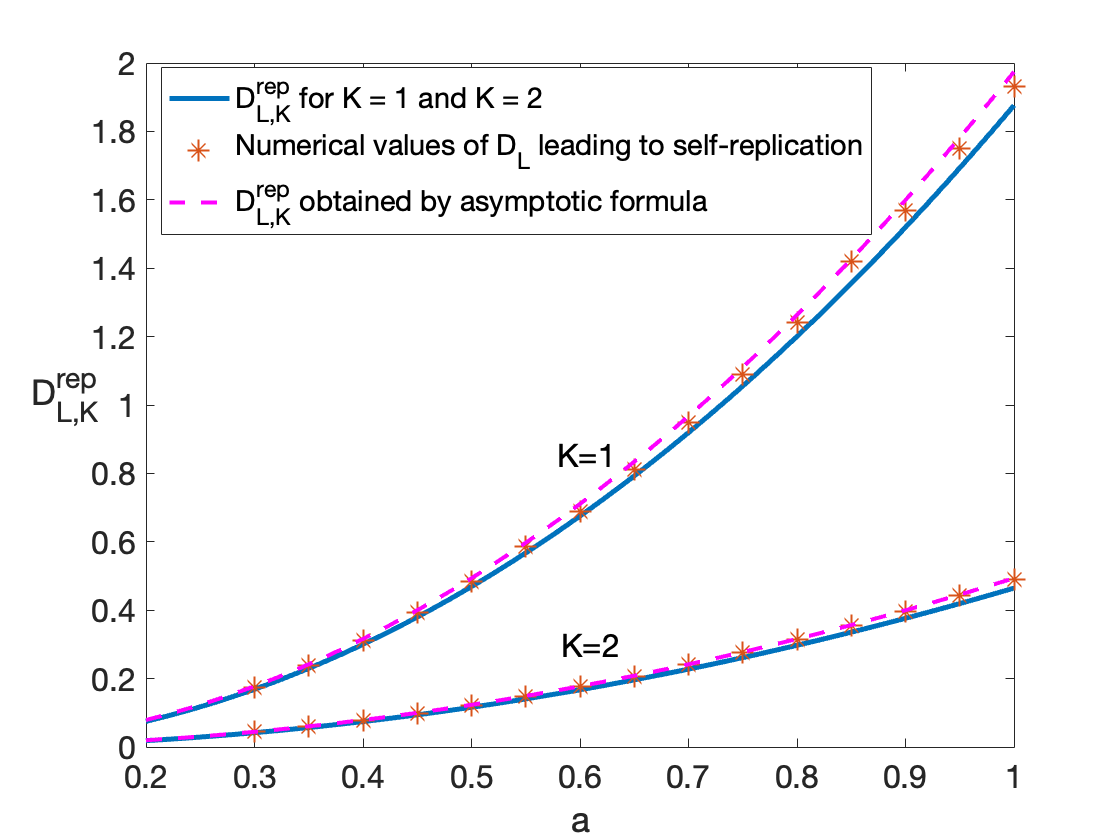}
        \caption{$D_{L,K}^{rep}$}
        \label{fig:Bru_Dc_rep}
    \end{subfigure}
    \begin{subfigure}[b]{0.48\textwidth}
        \includegraphics[width=\textwidth, height=6.0cm]{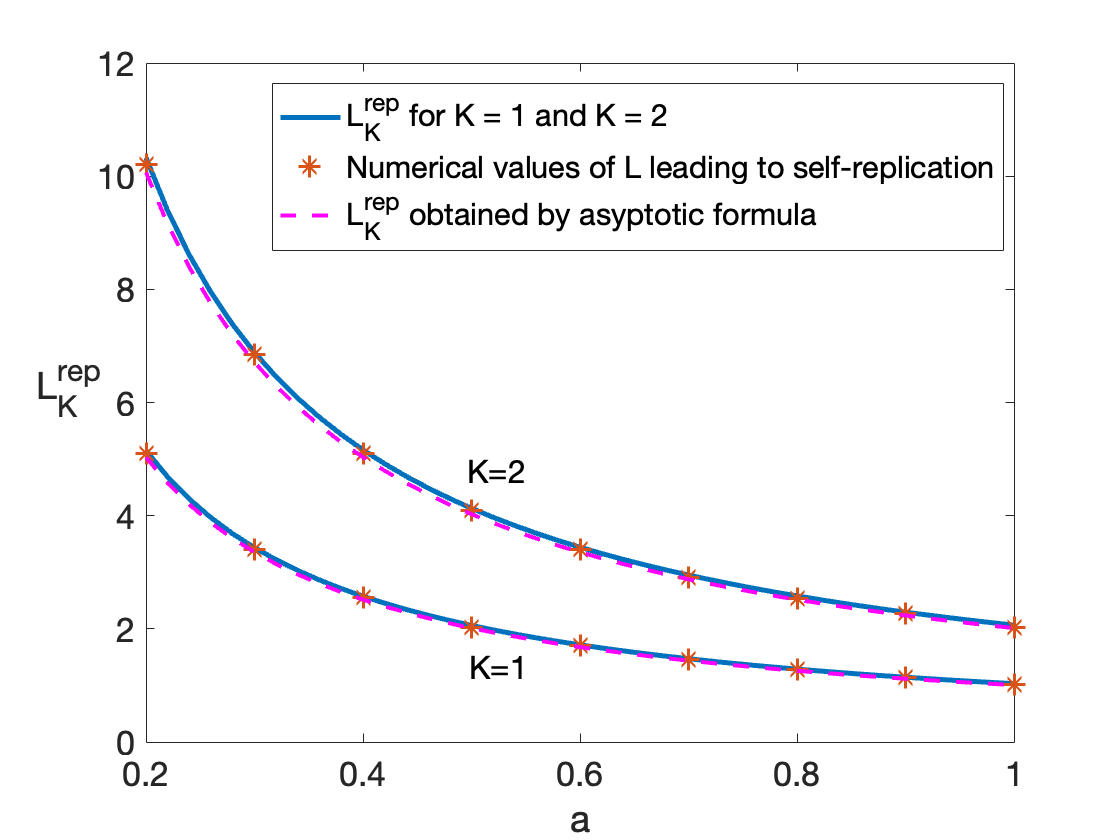}
        \caption{$L_{K}^{rep}$}
        \label{fig:Bru_Lc_rep}
    \end{subfigure}
    \caption{Comparison between numerical and analytical results of the spike self-replication thresholds $D_{L, K}^{rep}$ (a) and $L_K^{rep}$ (b) for the Brusselator model \eqref{Brusselator_Lagrange} versus $a$ with $f = 0.8 > f_c \approx 0.769$, $\varepsilon = 0.01$, $D = 2$, and for $K = 1$ and $K = 2$. The solid curves are the asymptotic results for $D_{L, K}^{rep}$ and $L_K^{rep}$ given in \eqref{bruss:thresh_repD} and \eqref{bruss:thresh_repL}, respectively. The red stars are from full numerical simulations of the Brusselator model \eqref{Brusselator_Lagrange} using \textit{FlexPDE} \cite{flexpde2015}. The dashed curves are the closed-form asymptotic approximations for $D_{L, K}^{rep}$ and $L_K^{rep}$ given in \eqref{bruss:DLK_smalla} and \eqref{bruss:small_Lfinal}, respectively, that are valid for $a \ll 1$.}
    \label{fig:Bru_rep_f0.8DL}
\end{figure}

For a fixed $f_c < f < 1$, where $f_c \approx 0.769$, the spike self-replication threshold in \eqref{bruss:thresh_repD} as $a$ is varied is computed numerically from using Newton's method on \eqref{bruss:nas} with $B = B_c(f)$ and $\mu < 2 a$. For $f = 0.8$, in Fig.~\ref{fig:Bru_Dc_rep} we plot $D_{L, K}^{rep}$ versus $a$ for $\varepsilon = 0.01$, $D = 2$, and for $K = 1$ and $K = 2$. The corresponding critical length threshold $L_K^{rep}$ obtained from \eqref{bruss:thresh_repL} is shown in Fig.~\ref{fig:Bru_Lc_rep}. These figures show a very close agreement between our asymptotic prediction for the spike self-replication threshold $D_{L, K}^{rep}$ or $L_K^{rep}$ and corresponding results estimated from time-dependent simulations of \eqref{Brusselator_Lagrange} using \textit{FlexPDE} \cite{flexpde2015}.

In \S \ref{sec:bruss_small_a} below, we will derive closed-form explicit approximations for $D_{L, K}^{rep}$ and $L_K^{rep}$ in the limit $a \ll 1$ that still provide a rather good approximation for the spike self-replication thresholds even for moderately small $a$. These results, given below in \eqref{bruss:DLK_smalla} and \eqref{bruss:small_Lfinal}, are indicated by the green solid curves in Fig.~\ref{fig:Bru_rep_f0.8DL}.

\begin{figure}[h!tbp]
    \centering
    \begin{subfigure}[b]{0.48\textwidth}
        \includegraphics[width=\textwidth, height=6.0cm]{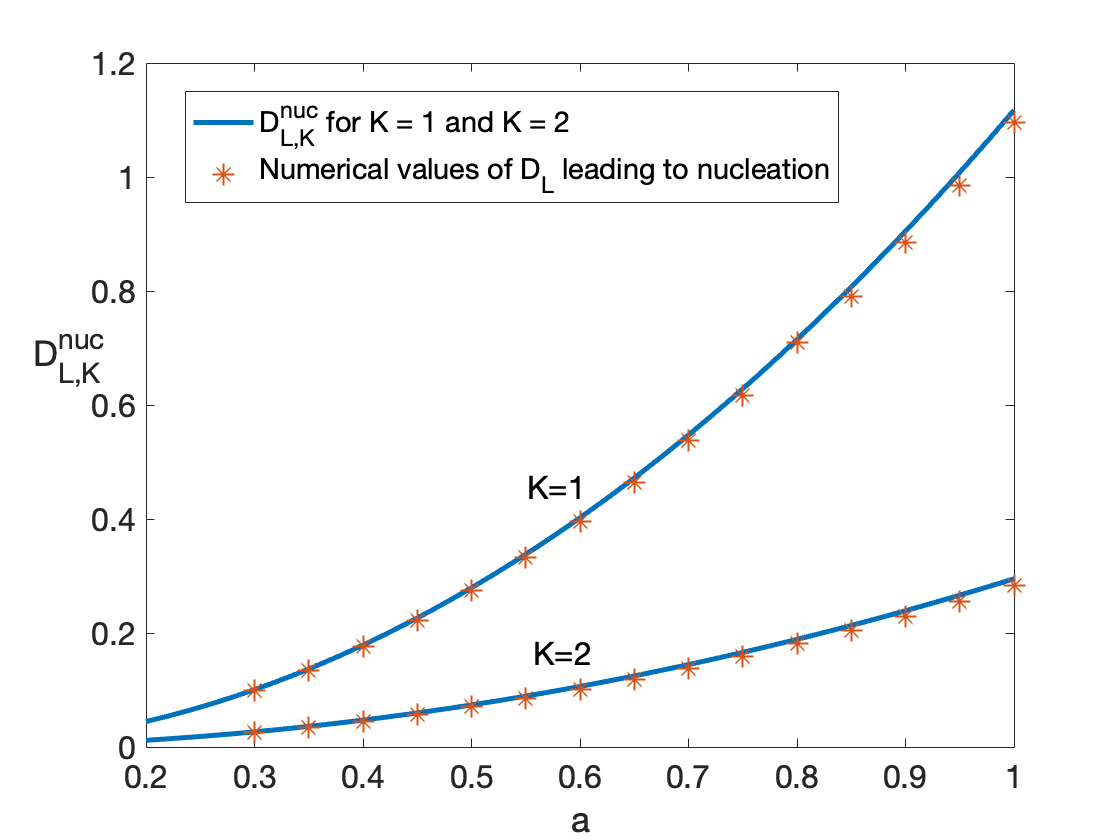}
        \caption{$D_{L,K}^{nuc}$}
        \label{fig:Bru_Dc_nuc}
    \end{subfigure}
    \begin{subfigure}[b]{0.48\textwidth}
        \includegraphics[width=\textwidth, height=6.0cm]{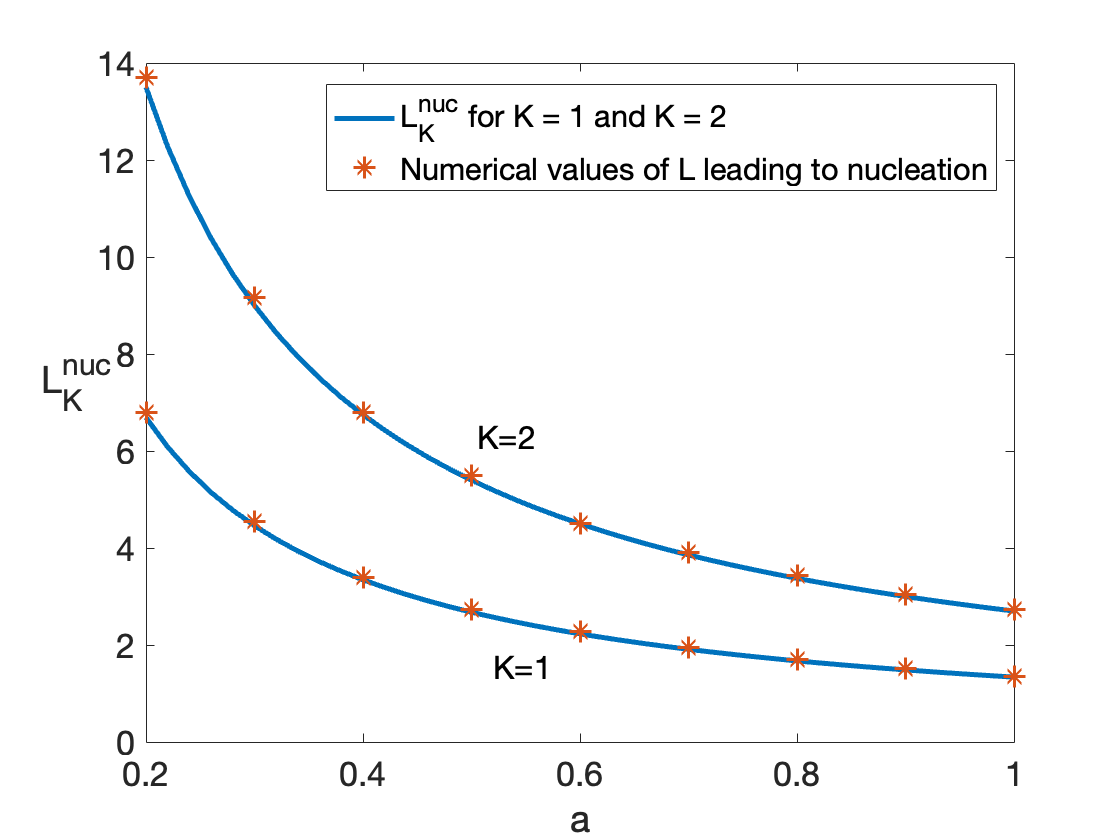}
        \caption{$L_{K}^{nuc}$}
        \label{fig:Bru_Lc_nuc}
    \end{subfigure}
    \caption{Same plot as in Fig.~\ref{fig:Bru_rep_f0.8DL} but for $f = 0.7 < f_c \approx 0.769$, $\varepsilon = 0.01$ and $D = 2$ where spike self-nucleation is the dominant mechanism as $L$ is increased. The solid curves are the asymptotic results for $D_{L, K}^{nuc}$ and $L_K^{nuc}$, as given in \eqref{bruss:thresh_nucD} and \eqref{bruss:thresh_nucL}, respectively. The red stars are obtained from time-dependent numerical simulations of \eqref{Brusselator_Lagrange} using \textit{FlexPDE} \cite{flexpde2015}. }
    \label{fig:Bru_nuc_f0.7DL}
\end{figure}

For the same parameters as in Fig.~\ref{fig:Bru_rep_f0.8DL}, but now with $f = 0.7$ we obtain, from the numerical solution to the algebraic system \eqref{bruss:nas}, that spike nucleation is the dominant mechanism as the domain half-length is increased. In Fig.~\ref{fig:Bru_Dc_nuc} we plot $D_{L, K}^{nuc}$ versus $a$ for $K = 1$ and $K = 2$, obtained from \eqref{bruss:thresh_nucD}, while the corresponding critical length threshold $L_K^{nuc}$, as given in \eqref{bruss:thresh_nucD}, is shown in Fig.~\ref{fig:Bru_Lc_nuc}. From these figures, we observe that these asymptotic predictions for the critical spike-nucleation thresholds agree very closely with corresponding results computed from time-dependent simulations of \eqref{Brusselator_Lagrange}.

\subsection{Asymptotics of the outer solution for $a$ small}
\label{sec:bruss_small_a}

We now calculate an explicit analytical prediction for the spike self-replication threshold valid for $a \ll 1$. In the limit $a \to 0$, spike nucleation behavior no longer occurs.

When $a \ll 1$, we obtain from \eqref{Bruss_out3} that
\begin{equation}\label{bruss:a-small-v}
    v(x) = a + a^2 f u + \ldots \, .
\end{equation}
By substituting \eqref{bruss:a-small-v} into \eqref{Bruss_out4}, and together with the two matching conditions for $u$ in \eqref{bruss:match}, we obtain that $u(x)$ for $a \ll 1$ satisfies the approximating linear problem
\begin{subequations}\label{bruss:nonlinear_asmall}
    \begin{align}
        D_L \, u_{xx} &= - a + a^2 (1 - f) u \, , \quad 0^+ < x < \ell \, ; \quad u_x(\ell) = 0 \, ,
        \\
        u(0^+) &= C_b \frac{\varepsilon}{\sqrt{D}} \, , \quad u_x(0^+) = \frac{B}{\sqrt{D_L}} \, ,
    \end{align}
\end{subequations}
where $C_b = C_b(B, f)$ is defined from the far-field of the core
problem \eqref{bruss:far_u0}.

The solution to \eqref{bruss:nonlinear_asmall} is
\begin{equation}\label{bruss:v1-solve}
    u = \frac{1}{a (1 - f)} + \left(\frac{C_b \varepsilon}{\sqrt{D}} - \frac{1}{a (1 - f)}\right) \frac{\cosh\left[a \sqrt{1 - f} (\ell - |x|)/\sqrt{D}\right]}{\cosh\left(a \sqrt{1 - f} \ell/\sqrt{D_L}\right)} \, .
\end{equation}
Upon satisfying the required condition for $u_{x}(0^{+})$ in \eqref{bruss:nonlinear_asmall} we get that
\begin{equation}\label{bruss:v1-B}
    B \sqrt{1 - f} = \left(1 - \frac{C_b a (1 - f) \varepsilon}{
    \sqrt{D}}\right) \tanh\left(a \sqrt{1 - f} \ell/\sqrt{D_L}\right) \, .
\end{equation}
To determine the approximation for $D_{L, K}^{rep}$ we evaluate \eqref{bruss:v1-B} at the fold point $B = B_c(f)$ of the core problem \eqref{bruss:rep_core} and solve for $D_L$. However, we observe from \eqref{bruss:v1-B} that we must have $a/\sqrt{D_L} = \mathcal O(1)$ when $a \ll 1$ and that $B_c \sqrt{1 - f} < 1$ in order for \eqref{bruss:v1-B} to have a solution. Upon neglecting the $\mathcal O(a \varepsilon)$ term in the prefactor in \eqref{bruss:v1-B}, we set $B = B_c$ and $\ell = 1/K$ and solve for $D_L$ to obtain the approximation
\begin{equation}\label{bruss:DLK_smalla}
    D_{L, K}^{rep} \approx \frac{a^2 (1 - f)}{K^2}
    \left[\tanh^{- 1}\left(B_c \sqrt{1 - f}\right)\right]^{- 2} \, ,
    \quad \mbox{for} \quad a \ll 1 \, ,
\end{equation}
for the spike self-replication threshold when $B_c \sqrt{1 - f} < 1$. Equivalently, the critical domain half-length for spike self-replication when $B_c \sqrt{1 - f} < 1$ is approximated by
\begin{equation}\label{bruss:small_Lfinal}
    L_K^{rep} \sim \frac{\sqrt{D} K}{a \sqrt{1 - f}} \tanh^{- 1}\left(B_c \sqrt{1 - f}\right) \, , \quad \mbox{for} \quad a \ll 1 \, .
\end{equation}
Rather remarkably, in Fig.~\ref{fig:Bru_rep_f0.8DL} we show a very close agreement, even at only moderately small values of $a$, between the approximate results \eqref{bruss:DLK_smalla} and \eqref{bruss:small_Lfinal} and the full simulation results.

\subsection{Global bifurcation diagrams}\label{bruss:global}

\begin{figure}[htbp]
    \centering
    \includegraphics[width=0.99\textwidth, height=5.5cm]{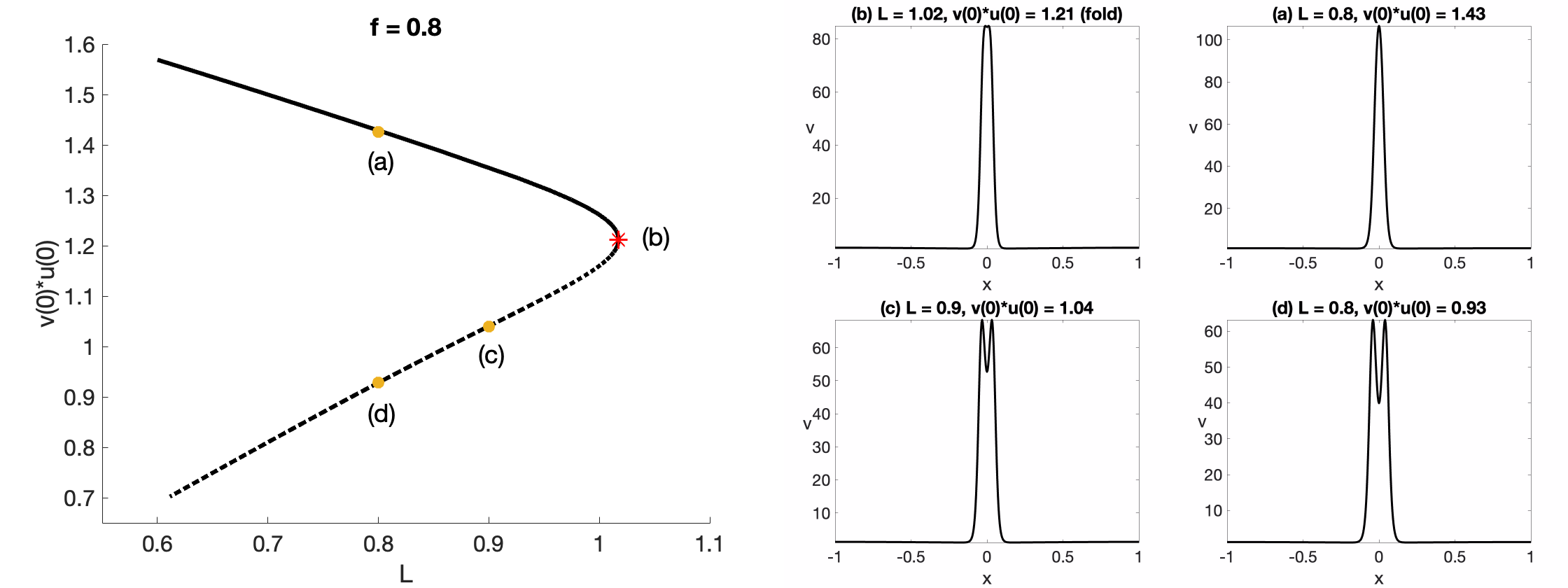}
    \caption{Left panel: Global bifurcation diagram of $v(0) u(0)$ versus $L$ for single-spike steady-states of the Brusselator \eqref{Brusselator_Lagrange} computed using \textit{pde2path} \cite{pde2path} for $f = 0.8$, $\varepsilon = 0.01$, $a = 1$ and $D = 2$. Since $f_c < f < 1$, with $f_c \approx 0.769$, we predict that spike self-replication occurs as $L$ is increased starting from point (a) on the linearly stable upper branch. The red star at (b) is the fold point signifying the onset of replication. The upper branch is connected to an unstable lower branch where the spike profile has a volcano shape. The bifurcation diagram is qualitatively similar to that of the core problem in Fig.~\ref{fig:Bru_core_bif}. Right panel: Spike profile $v(x)$ and bifurcation values (top of subfigures) at the indicated points in the left panel.}
    \label{fig:Bru_bif_rep}%
\end{figure}

In terms of $L$, we use \textit{pde2path} \cite{pde2path} to compute global bifurcation diagrams that path-follow single-spike steady-state solutions for the Brusselator \eqref{Brusselator_Lagrange} with $\varepsilon = 0.01$, $a = 1$ and $D = 2$. For $f = 0.8$, which satisfies $f_c < f < 1$ with $f_c \approx 0.769$, the global bifurcation diagram in Fig.~\ref{fig:Bru_bif_rep} for the case where spike self-replication occurs is qualitatively identical to that for the Schnakenberg model shown in Fig.~\ref{fig:Sch_bif_rep}. The computed saddle-node value $L^{num} \approx 1.02$ in Fig.~\ref{fig:Bru_bif_rep} for the onset of spike self-replication is well-approximated by the critical threshold $L_1^{rep}$ in \eqref{bruss:thresh_repL} for which the core problem \eqref{bruss:rep_core} with far-field behavior \eqref{bruss:far_u0} has a saddle-node point when coupled to the outer solution (see Fig.~\ref{fig:Bru_Lc_rep}).

In contrast, for $f = 0.7$, which satisfies $0.5 < f < f_c \approx 0.769$, the global bifurcation diagram shown in Fig.~\ref{fig:Bru_bif_nuc} for the case where spike nucleation occurs is qualitatively identical to that for the Schnakenberg model shown in Fig.~\ref{fig:Sch_bif_nuc}. The computed saddle-node value $L^{num} \approx 1.35$ in Fig.~\ref{fig:Bru_bif_nuc} for the onset of spike nucleation is well-approximated by the critical threshold $L_1^{nuc}$ in \eqref{bruss:thresh_nucL} for which the outer solution from our asymptotic theory ceases to exist (see Fig.~\ref{fig:Bru_Lc_nuc}).

\begin{figure}[htbp]
    \centering
    \includegraphics[width=0.99\textwidth, height=5.5cm]{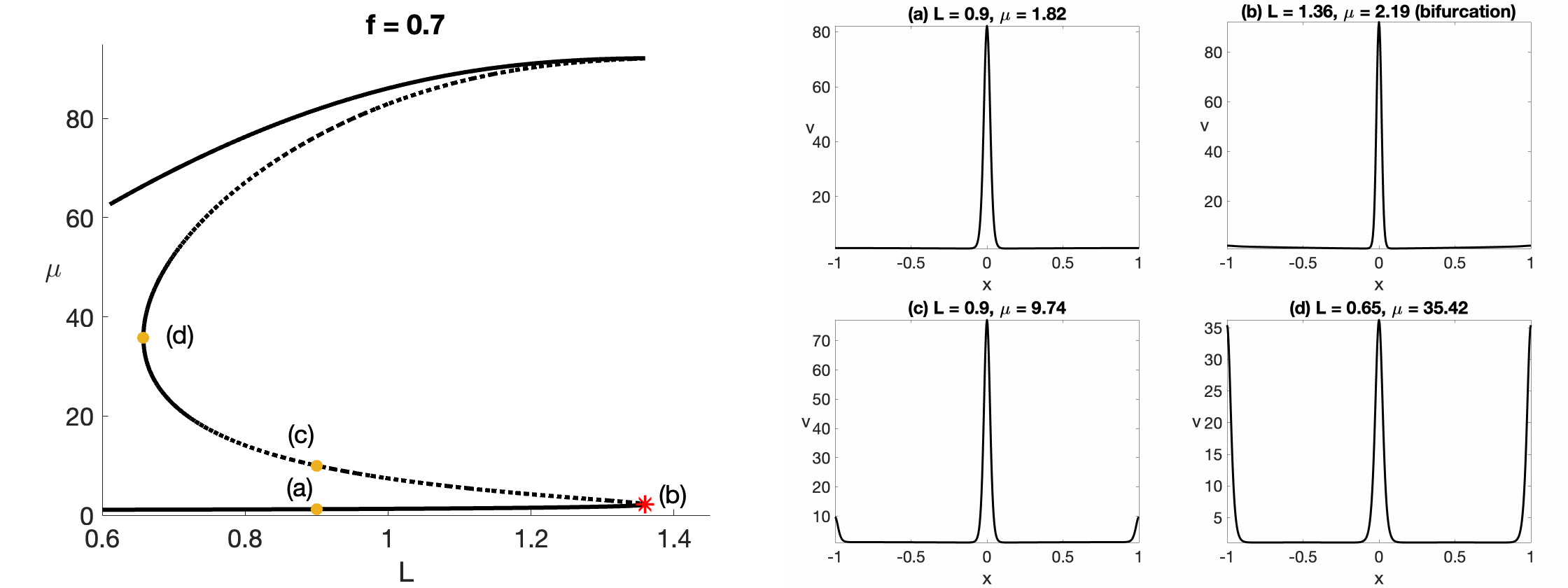}
    \caption{Left panel: Same plot and parameters as in Fig.~\ref{fig:Bru_bif_rep} except that now $f$ is decreased to $f = 0.7 < f_c \approx 0.769$ and the vertical axis is now $\mu = v(1)$. Since $0.5 < f < f_c \approx 0.769$, we predict that spike nucleation will occur as $L$ is increased starting from point (a) on the linearly stable lower branch. The red star is the saddle-node bifurcation point signifying the onset of spike nucleation behavior. On the unstable middle branch, the solution has an interior spike with boundary spikes that emerge at the domain boundaries. Right panel: Spike profile $v(x)$ and bifurcation values at the indicated points in the left panel.}
    \label{fig:Bru_bif_nuc}%
\end{figure}

\subsection{Validation of asymptotic theory: Full PDE
  simulations}\label{sec:Br_fullpde}

\begin{figure}[htbp]
    \centering
    \includegraphics[width=0.95\textwidth, height=6.0cm]{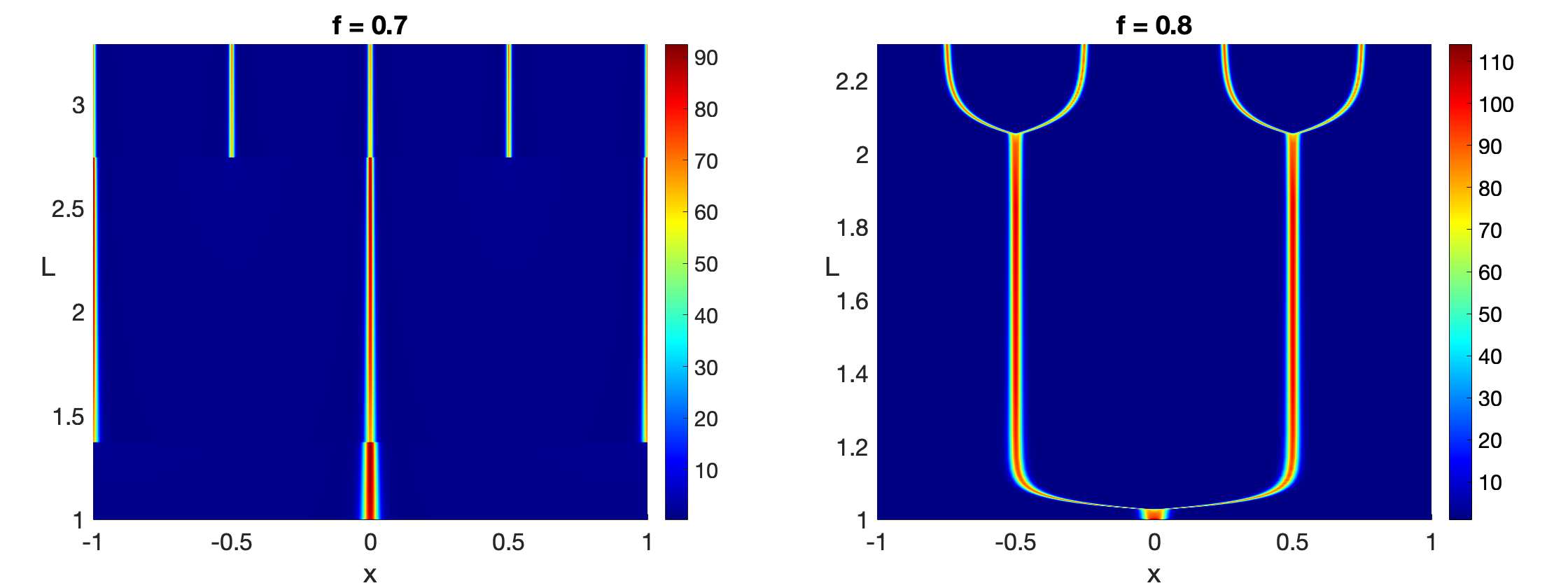}
    \caption{Time-dependent PDE simulations of \eqref{Brusselator_Lagrange} using \textit{FlexPDE} \cite{flexpde2015} as the domain half-length $L$ slowly increases as $L = e^{\rho t}$ with $\rho = 10^{- 4}$. Color plot of the magnitude of the solution $v$ is plotted as a function of $L$, which measures time. Parameters: $\varepsilon = 0.01$, $a = 1$ and $D = 2$. Left panel: For $f = 0.7$, spike nucleation dynamics occur as $L$ increases, as predicted by Fig.~\ref{fig:Bru_nuc_f0.7DL}. Right panel: For $f = 0.8$ spike self-replication dynamics occur as $L$ increases, as predicted by Fig.~\ref{fig:Bru_rep_f0.8DL}.}
    \label{fig:Bru_flexpde_plot}%
\end{figure}

For parameter sets corresponding to either spike self-replication or spike nucleation, in Fig.~\ref{fig:Bru_flexpde_plot} we show full-time-dependent PDE simulations of \eqref{Brusselator_Lagrange} computed using \textit{FlexPDE} \cite{flexpde2015} as the domain half-length $L$ slowly increases in time according to $L=e^{\rho t}$ with $\rho = 10^{- 4}$. For $\varepsilon = 0.01$, $a = 1$ and $D = 2$, in the left panel of Fig.~\ref{fig:Bru_flexpde_plot} we show spike nucleation behavior for $f = 0.7$. The two transition values of $L$ where nucleation events occur are well-approximated by the asymptotic results in \eqref{bruss:thresh_nucL} as plotted in Fig.~\ref{fig:Bru_Lc_nuc}. In contrast, for $f = 0.8$ in the right panel of Fig.~\ref{fig:Bru_flexpde_plot} we show spike self-replication behavior where the two transition values of $L$ where splitting occurs are well-approximated by the results in \eqref{bruss:thresh_repL} as plotted in Fig.~\ref{fig:Bru_Lc_rep}.

\section{The Gierer-Meinhardt model}\label{sec:gm}

In this section, we construct a one-spike solution centered at $x = 0$ to the GM model \eqref{GM_Lagrange} defined on the canonical domain $|x| \leq \ell$. We will show that, although spike self-replication behavior no longer occurs, the GM model admits spike nucleation behavior near $x=\pm \ell$ for $0<\kappa<1$ when $D_{L}$ is below a threshold.

In the inner region, we introduce the inner variables $y$, $A$ and $H$ by
\begin{equation}\label{gm:inn_var}
    y = \frac{x}{\varepsilon_L} \, , \qquad \mathcal A = \frac{A}{\varepsilon_L} \, , \qquad \mathcal H = \frac{H}{\varepsilon_L} \, ,
\end{equation}
so that \eqref{GM_Lagrange} transforms on $y \geq 0$ to
\begin{equation}\label{gm:main}
    A_{yy} - A + \frac{A^2}{H} + \varepsilon_L \kappa = 0 \, , \qquad D_L H_{yy} - \varepsilon_L^2 H + \varepsilon_L A^2 = 0 \, ,
\end{equation}
with $A_y(0) = 0$ and $H_y(0) = 0$. Upon expanding 
\begin{equation}\label{gm:expan}
    A = A_0 + \mathcal O(\varepsilon_L) \, , \qquad H = H_0 + \varepsilon_L H_1 + \ldots \, ,
\end{equation}
and substituting into \eqref{gm:main} we obtain that $H_0$ is a constant, and that $H_1$ satisfies
\begin{equation}\label{gm:h1}
      H_{1yy} = - \frac{A_0^2}{D_L} \, .  
\end{equation}
To account for the non-zero far-field of the activator, we will take $A_0$ to be the homoclinic solution of
\begin{equation}\label{gm:A0}
    A_{0yy} - A_0 + \frac{A_0^2}{H_0} + \varepsilon_L \kappa = 0 \, , \quad y \geq 0 \, ; \quad A_{0y}(0) = 0 \, , \quad A_0(0) > 0 \, .
\end{equation}
The homoclinic solution to \eqref{gm:A0} is
\begin{equation}\label{gm:A0sol}
    A_0(y) = H_0(w_0(y) + \gamma) \, ,
\end{equation} 
where $w_0(y)$ is the unique solution to
\begin{equation}\label{gm:weqn}
    w_{0yy} - (1 - 2 \gamma) w_0 + w_0^2 = 0 \, , \qquad w_0(0) > 0 \, , \quad w_{0y}(0) = 0 \, , \quad \lim_{y \to \infty} w_0 = 0 \, .
\end{equation}
Here $\gamma$ satisfies the quadratic equation
\begin{equation}\label{gm:gamma_eq}
    \gamma^2 - \gamma + \varepsilon_L \kappa H_0 = 0 \, .
\end{equation}
Since we require $\gamma < 1/2$ in \eqref{gm:weqn}, we must take $\gamma$ as the smallest root of \eqref{gm:gamma_eq}, which is given by
\begin{equation}\label{gm:gam}
    \gamma = \frac{1 - \sqrt{1 - 4 \varepsilon_L \kappa/H_0}}{2} \, .
\end{equation}
For $\varepsilon_L \ll 1$, we have $\gamma \sim \varepsilon_L \kappa/H_0 + \mathcal O(\varepsilon_L^2)$. The explicit solution to \eqref{gm:weqn} is readily calculated as
\begin{equation}\label{gm:w0sol}
    w_0(y) = \frac{3}{2}(1 - 2 \gamma) \, \mbox{sech}^2\left(\frac{\sqrt{1 - 2 \gamma}}{2} y\right) \, ,
\end{equation}
which, from \eqref{gm:A0sol}, determines the homoclinic solution to \eqref{gm:A0} up to a constant $H_0$ to be found. By letting $y \to \infty$, we use \eqref{gm:inn_var}, \eqref{gm:expan} and \eqref{gm:A0sol} to conclude that
\begin{equation}\label{gm:A_ff}
    \mathcal A \sim \frac{H_0 \gamma}{\varepsilon_L} \, , \quad \mbox{as} \quad y \to \infty \, .
\end{equation}
Next, we determine the far-field behavior of $H$. Upon substituting \eqref{gm:A0sol} into \eqref{gm:h1}, we obtain that
\begin{equation}\label{gm:h1_1}
  H_{1yy} = - \frac{H_0^2}{D_L}\left(w_0 + \gamma\right)^2 = - \frac{H_0^2\gamma^2}{D_L} - \frac{H_0^2}{D_L}\left(w_0^2 + 2 \gamma w_0\right) \, .
\end{equation}
Upon integrating \eqref{gm:h1_1} using $H_{1y}(0) = 0$, we obtain for any $y > 0$ that
\begin{equation}\label{gm:h1_2}
    H_{1y} = - \frac{H_0^2 \gamma^2}{D_L} y - \frac{H_0^2}{D_L} \left(\int_0^y w_0^2 \, ds + 2\gamma \int_0^y w_0 \, ds \right) \, .
\end{equation}
To determine the limiting behavior as $y \to \infty$, we use \eqref{gm:w0sol} to calculate
\begin{equation}\label{gm:wint}
    \int_0^\infty w_0\, ds = 3 \left(1 - 2 \gamma\right)^{1/2} \, , \qquad \int_0^\infty w_0^2\, ds = 3 \left(1 - 2 \gamma\right)^{3/2} \, .
\end{equation}
By using \eqref{gm:wint} in \eqref{gm:h1_2}, we conclude that
\begin{equation}\label{gm:h1_ff}
    \lim_{y \to \infty} \left(H_{1y} + \frac{H_0^2 \gamma^2}{D_L} y \right) = - \frac{H_0^2}{D_L} \left[3 \left(1 - 2 \gamma\right)^{3/2} + 6 \gamma \left(1 - 2 \gamma\right)^{1/2} \right] = - \frac{3 H_0^2}{D_L} \left(1 - 2 \gamma\right)^{1/2} \, .
\end{equation}
In this way, by using \eqref{gm:h1_ff} together with \eqref{gm:inn_var} and \eqref{gm:expan} we obtain that $\mathcal H$ has the far-field behavior
\begin{equation}\label{gm:H_ff}
    \mathcal H \sim \frac{H_0}{\varepsilon_L} - \frac{H_0^2 \gamma^2}{2 D_L} y^2 - \frac{3 H_0^2}{D_L} \left(1 - 2 \gamma\right)^{1/2} y \, , \quad \mbox{as} \quad y \to \infty \, .
\end{equation}
We now construct the solution to \eqref{GM_Lagrange} in the outer
region $0^+ < x < \ell$. This outer solution satisfies
\begin{subequations}\label{gm:out}
    \begin{align}
        - \mathcal A +  \frac{\mathcal A^2}{\mathcal H} + \kappa &= 0 \, , \label{GM_out1}
        \\
        D_L \mathcal H_{xx} - \mathcal H + \mathcal A^2 &= 0 \, ,  \quad \mathcal H_x(\pm \ell) = 0 \, . \label{GM_out2}
    \end{align}
\end{subequations}
From \eqref{GM_out1}, we obtain that
\begin{equation}\label{gm:hout}
    \mathcal H = \frac{\mathcal A^2}{\mathcal A - \kappa} \, , \quad \mbox{for} \quad \mathcal A > \kappa \, ,
\end{equation}
which implies
\begin{equation}\label{gm:hout_x}
    \mathcal H_x = - \frac{\mathcal A \, \left(2 \kappa - \mathcal A\right)}{\left(A - \kappa\right)^2} \, \mathcal A_x \, , \quad \mbox{for} \quad \mathcal A > \kappa \, .
\end{equation}
Upon substituting \eqref{gm:hout} and \eqref{gm:hout_x} into \eqref{GM_out2} we obtain that the outer problem for $\mathcal A$ is
\begin{subequations}\label{gm:outer_probfull}
    \begin{equation}\label{gm:outer_prob}
        D_L \, \left(f(\mathcal A) \, \mathcal A_x\right)_x = R_g(\mathcal A) \, ; \quad  0^+ < x < \ell \, ; \quad \mathcal A_x(\ell) = 0 \, ,
    \end{equation}
    where
    \begin{equation}\label{gm:outer_def}
        f(\mathcal A) \equiv \frac{\mathcal A \left(2 \kappa - \mathcal A\right)}{\left(\mathcal A - \kappa\right)^2} \, , \qquad R_g(\mathcal A) \equiv \mathcal A^2 - \frac{\mathcal A^2}{\mathcal A - \kappa} \, .
    \end{equation}
\end{subequations}
The problem \eqref{gm:outer_probfull} is well-posed when $\mathcal H > 0$ and $\mathcal H_x > 0$ on $0^+ < x < \ell$. This implies that we must have $\kappa < \mathcal A(x) < 2 \kappa$ on $0^+ < x < \ell$.

Next, we derive the matching conditions between the inner and outer solutions. From \eqref{gm:A_ff}, together \eqref{gm:gam} for $\gamma$, the first matching condition for the outer solution is that
\begin{equation}\label{gm:match_1}
    \mathcal A(0^+) = \frac{H_0}{2 \varepsilon_L} \left(1 - \sqrt{1 - \frac{4 \varepsilon_L \kappa}{H_0}}\right) \, .
\end{equation}
For any $4 \varepsilon_L \kappa/H_0 < 1$, we claim that $\mathcal A(0^+) > \kappa$. To establish this inequality, we define $z \equiv {2 \varepsilon_L \kappa/H_0}$ and readily calculate that
\begin{equation*}
    \frac{\mathcal A(0^+)}{\kappa} = \frac{1 - \sqrt{1 - 2 z}}{z} \, .
\end{equation*}
Since $\sqrt{1 - 2 z} < 1 - z$ on $0 < z < 1/2$, the expression above yields $\mathcal A(0^+) > \kappa$ whenever $4 \varepsilon_L \kappa/H_0 < 1$.

The second condition involves matching the flux $\mathcal H_x$ as $x \to 0^+$. We first observe that the $\mathcal O(y^2)$ term in the far-field behavior \eqref{gm:H_ff} matches exactly with the quadratic term in the near-field behavior of $\mathcal H$ as $x \to 0^+$ that arises from the $\mathcal A^2$ term in \eqref{GM_out2}. From \eqref{gm:H_ff} we conclude that we must have
\begin{equation}\label{gm:hx_match}
    \mathcal H_x = - \frac{3 H_0^2}{\varepsilon_L D_L}
    \left(1 - 2 \gamma\right)^{1/2} \, , \quad \mbox{as} \quad x \to 0^+ \, .
\end{equation}
From \eqref{gm:hout_x}, \eqref{gm:hx_match} provides the second matching condition for the outer solution given in terms of $\mathcal A$ by
\begin{equation}\label{gm:match_2}
    \lim_{x \to 0^+} f(\mathcal A) \mathcal A_x = \frac{3 H_0^2}{\varepsilon_L D_L} \left(1 - 2 \gamma\right)^{1/2} \, .
\end{equation}
Next, we establish the following lemma, which is analogous to Lemmas \ref{lemma:sch} and \ref{lemma:bruss}.

\begin{lemma}\label{lemma:gm}
    Suppose that $0 < \kappa < 1$. Then, on the range of $x$ where $\kappa < \mathcal A(x) < 2 \kappa$, we have $\mathcal R_g(\mathcal A) < 0$ and, consequently, $d \mathcal A/dx > 0$.
\end{lemma}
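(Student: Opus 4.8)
The plan is to mirror the two-part structure of Lemmas~\ref{lemma:sch} and~\ref{lemma:bruss}: first establish the sign $R_g(\mathcal A) < 0$ on the well-posedness range $\kappa < \mathcal A < 2\kappa$ identified just after \eqref{gm:outer_probfull}, and then deduce the monotonicity $d\mathcal A/dx > 0$ by a single integration of the outer ODE \eqref{gm:outer_prob}.

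Unlike the two earlier lemmas, where $R_s$ and $R_b$ were linear so that checking the two endpoints together with the sign of the derivative sufficed, here $R_g$ is the rational function given in \eqref{gm:outer_def}. The cleanest approach is therefore to combine its two terms over the common denominator $\mathcal A - \kappa$, which yields the factored form
\begin{equation*}
    R_g(\mathcal A) = \frac{\mathcal A^2 \left(\mathcal A - \kappa - 1\right)}{\mathcal A - \kappa} \, .
\end{equation*}
On the range $\kappa < \mathcal A < 2\kappa$ the factors $\mathcal A^2$ and $\mathcal A - \kappa$ are both positive, so the sign of $R_g$ is controlled entirely by $\mathcal A - \kappa - 1$. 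Since $\mathcal A < 2\kappa$ and $\kappa < 1$, one has $\mathcal A - \kappa - 1 < \kappa - 1 < 0$, which immediately forces $R_g(\mathcal A) < 0$.

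For the monotonicity, I would replicate the computation in \eqref{lemma:sch_eq}: integrating \eqref{gm:outer_prob} from $x$ to $\ell$ and imposing the no-flux condition $\mathcal A_x(\ell) = 0$ produces
\begin{equation*}
    - D_L \, f(\mathcal A) \, \mathcal A_x = \int_x^\ell R_g\left[\mathcal A(\eta)\right] \, d\eta < 0 \, ,
\end{equation*}
valid wherever $\kappa < \mathcal A < 2\kappa$. Because $f(\mathcal A) = \mathcal A (2\kappa - \mathcal A)/(\mathcal A - \kappa)^2 > 0$ on this range, each factor being positive since $0 < \mathcal A < 2\kappa$, we conclude $\mathcal A_x > 0$.

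There is no genuine obstacle here; the only point worth flagging is that both the sign of $R_g$ and the positivity of $f$ rest on the admissibility restriction $\kappa < \mathcal A < 2\kappa$ --- precisely the range on which the lemma is asserted --- so I would remark that this is the same range already required for the well-posedness of \eqref{gm:outer_probfull}, rather than an additional hypothesis. Everything else is routine algebra.
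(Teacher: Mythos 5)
Your proof is correct, and it follows the same two-part structure as the paper's (sign of $R_g$ first, then monotonicity via a single integration of \eqref{gm:outer_prob} against the no-flux condition, exactly as in \eqref{lemma:sch_eq}); the second half is identical to the paper's argument. Where you differ is in how the sign of $R_g$ is established. The paper does not factor $R_g$: it evaluates the endpoint behavior, $\lim_{\mathcal A \to \kappa^+} R_g = -\infty$ and $R_g(2\kappa) = 4\kappa(\kappa-1) < 0$ for $0 < \kappa < 1$, and then shows $R_g^{\prime}(\mathcal A) = 2\mathcal A + f(\mathcal A) > 0$ on $\kappa < \mathcal A < 2\kappa$, so that monotonicity plus negativity at the right endpoint forces $R_g < 0$ throughout. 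Your factorization $R_g(\mathcal A) = \mathcal A^2(\mathcal A - \kappa - 1)/(\mathcal A - \kappa)$ is more direct and arguably cleaner: the sign is read off from the single factor $\mathcal A - \kappa - 1$, and the role of the hypothesis $\kappa < 1$ becomes completely transparent (it is exactly what makes this factor negative up to $\mathcal A = 2\kappa$). The one thing the paper's route buys that yours does not is the explicit fact $R_g^{\prime} > 0$ on the interval, which is not part of the lemma but is invoked later (in establishing the positivity of $\chi_g^{\prime}(\mu)$ in \eqref{properchi_gm}); if you adopt your version, that derivative computation would still need to be recorded somewhere.
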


\begin{proof}
    From \eqref{gm:outer_def} we have
    \begin{equation*}
        \lim_{\mathcal A \to \kappa^+} R_g(\mathcal A) = - \infty
        \, , \qquad R_g(2\kappa)=4\kappa(\kappa-1) \,.
    \end{equation*}
    It follows that $R_g(2 \kappa)<0$ if $0<\kappa<1$. Moreover, we calculate that $R_g^{\prime}({\mathcal A}) = 2 \mathcal A + f(\mathcal A)$, where $f(\mathcal A)$ is defined in \eqref{gm:outer_def}. Since $f(\mathcal A) > 0$ on $\kappa < \mathcal A < 2 \kappa$, it follows that $R_g^{\prime}(\mathcal A) > 0$ on $\kappa < \mathcal A < 2 \kappa$. For $0 < \kappa < 1$, we conclude that $R_g(\mathcal A) < 0$ whenever $\kappa < \mathcal A(x) < 2 \kappa$. Moreover, upon integrating \eqref{gm:outer_prob}, and imposing $\mathcal A_x(\ell) = 0$, we obtain on $0 < x < \ell$ that  \begin{equation}\label{lemma:gm_eq}
        D_L \left[f(\mathcal A) \mathcal A_x\right]\vert_x^\ell = -D_L f(\mathcal A) \mathcal A_x = \int_x^\ell R_g\left[\mathcal A(\eta)\right] \, d\eta < 0 \, ,
    \end{equation}
    whenever $\kappa < \mathcal A < 2 \kappa$ and $0 < \kappa < 1$. Since $f(\mathcal A) > 0$ for $\kappa < \mathcal A < 2 \kappa$, we conclude that $\mathcal A_x > 0$ when $\kappa < \mathcal A(x)< 2 \kappa$ and $0 < \kappa < 1$.
\end{proof}
The remaining steps in the analysis to construct quasi-steady-states are similar to that for the Schnakenberg and Brusselator models analyzed in \S \ref{sec:sch_equil} and \S \ref{sec:bruss_equil}, respectively.

In place of \eqref{Gprime_sch}, we define $\mathcal G_g^{\prime}(\xi)$ by
\begin{equation}\label{Gprime_gm}
    \mathcal G_g^{\prime}(\xi) \equiv - R_g(\xi) \, f(\xi) = \frac{\xi (2 \kappa - \xi)}{(\xi - \kappa)^2} \left(\frac{\xi^2}{\xi - \kappa} - \xi^2\right) > 0 \, , \quad \mbox{on} \quad \kappa < \xi < 2 \kappa \, .
\end{equation}
A first integral of \eqref{Gprime_gm} yields
\begin{equation}\label{G_GM}
    \mathcal G_g(\xi) = -\frac{\kappa^4}{2 (\xi - \kappa)^2} + \frac{\kappa^3 (\kappa - 2)}{\xi - \kappa} - 2 \kappa^3 \log(\xi - \kappa) - \kappa (\kappa + 1) \xi + \frac{1}{3} \xi^3 - \frac{1}{2} \xi^2 \, , \quad \mbox{on} \quad \kappa < \xi < 2 \kappa \, .
\end{equation}
Upon first multiplying \eqref{gm:outer_prob} by $f(\mathcal A) \mathcal A_x$ and then integrating, we obtain using the monotonicity of $\mathcal A(x)$ and \eqref{Gprime_gm} that
\begin{equation}\label{gm:int_1}
    - \frac{D_L}{2} \left[f(\mathcal A) \mathcal A_x\right]^2 = \int_x^\ell f(\mathcal A) R_g(\mathcal A) \mathcal A_x \, dx = - \int_{\mathcal A(x)}^\mu \mathcal G_g^{\prime}(\mathcal A) \, d \mathcal A = \mathcal G_g(\mathcal A) - \mathcal G_g(\mu) \, .
\end{equation}
Here $\mathcal G_g(\xi)$ is given in \eqref{G_GM} and $\mu \equiv \mathcal A(\ell)$ satisfies $\kappa < \mu \leq 2 \kappa$. By taking the positive square root in \eqref{gm:int_1}
\begin{equation}\label{gm:flux}
    f(\mathcal A) \mathcal A_x = \sqrt{\frac{2}{D_L}} \sqrt{\mathcal G_g(\mu) - \mathcal G_g(\mathcal A(x))} \, .
\end{equation}
Letting $x \to 0^+$ in \eqref{gm:flux} and imposing the matching condition \eqref{gm:match_2} we conclude that $H_0$ is related to $\mu$ by the nonlinear algebraic equation
\begin{equation}\label{gm:cond1}
    \frac{3 H_0^2}{\varepsilon_L \sqrt{2 D_L}} \left(1 - 2 \gamma\right)^{1/2} = \sqrt{\mathcal G_g(\mu) - \mathcal G(\mathcal A(0^+))} \, ,
\end{equation}
where $\mathcal A(0^+) > \kappa$ and $\gamma$ are given in terms of $H_0$ by \eqref{gm:match_1} and \eqref{gm:gam}, respectively. By using the scaling relation \eqref{sch:scale} we can re-write \eqref{gm:cond1} as a nonlinear algebraic equation for $H_{0, L} \equiv H_0/\varepsilon_L$ and $\mu$ in the form
\begin{subequations}\label{gm:cond1_new}
    \begin{equation}\label{gm:cond1_new_a}
        \frac{3 H_{0, L}^2}{\sqrt{2}} \left(\frac{\varepsilon}{\sqrt{D}}\right) \left(1 - 2 \gamma\right)^{1/2} = \sqrt{\mathcal G_g(\mu) - \mathcal G(\mathcal A(0^+))} \, ,
    \end{equation}
    where $\mathcal A(0^+)$ and $\gamma$ are given in terms of $H_{0, L}$ by
    \begin{equation}\label{gm:cond1_new_b}
        \mathcal A(0^+) = \gamma H_{0, L} \, , \quad \mbox{where} \quad \gamma = \frac{1 - \sqrt{1 - {4 \kappa/H_{0, L}}}}{2} \, .
    \end{equation}
\end{subequations}
Next, upon integrating \eqref{gm:flux}, and proceeding as for the Schnakenberg and Brusselator models in \S \ref{sec:sch_equil} and \S \ref{sec:bruss_equil}, respectively, we obtain that $\mu = \mathcal A(\ell)$ is related to $H_{0, L}$ when $0 < \kappa < 1$ by
\begin{equation}\label{properchi_gm}
    \chi_g(\mu) \equiv - 2 \frac{\sqrt{\mathcal G_g(\mu) - \mathcal G_g(\mathcal A(0^+))}}{R_g(\mathcal A(0^+))} + 2 \int_{\mathcal A(0^+)}^\mu \, \frac{\sqrt{\mathcal G_g(\mu) - \mathcal G_g(\xi)}}{\left[R_g(\xi)\right]^2} \, R_g^{\prime}(\xi) \, d\xi = \sqrt{\frac{2}{D_L}} \ell \, .
\end{equation}
Here, $R_g(\xi)$ and $\mathcal G_g(\xi)$ are defined in \eqref{gm:outer_def} and \eqref{G_GM}, respectively. The positivity of $\chi_g^{\prime}(\mu)$ on $\kappa < \mathcal A(0^+) < \mu < 2 \kappa$ when $0 < \kappa < 1$ follows since $R_g(\xi) < 0$, $R_g^{\prime}(\xi) > 0$ and $\mathcal G_g^{\prime}(\xi) > 0$ on $\kappa < \xi < 2 \kappa$. The maximum value of $\mu = v(\ell)$ occurs when $\mu = 2 \kappa$, and we define
\begin{equation}\label{gm:xmax}
    \chi_{g, \max} \equiv \chi_g(2 \kappa) \, , \qquad \mu_{\max} \equiv 2 \kappa \, .
\end{equation}
We summarize our asymptotic construction of a one-spike solution to
\eqref{GM_Lagrange} as follows:

\begin{result}
    On the range $0 < \kappa < 1$, our asymptotic construction of a one-spike solution to \eqref{GM_Lagrange} on $|x| \leq \ell$ when $\varepsilon_L \ll 1$ reduces to solving the coupled nonlinear algebraic equations \eqref{gm:cond1_new} and \eqref{properchi_gm} for $H_{0, L} \equiv H_0/\varepsilon_L$ and $\mu = \mathcal A(\ell)$ in terms of $\kappa$, ${\varepsilon/\sqrt{D}}$ and $\ell/\sqrt{D_L}$. For a $K$-spike solution on $- 1 \leq x \leq 1$, where we must set $\ell = 1/K$, the nucleation threshold $D_{L, K}^{nuc}$ is found by first setting $\mu=2\kappa$ and solving \eqref{gm:cond1_new} for $H_{0, L}$. This determines $\mathcal A(0^+)$ from \eqref{gm:cond1_new_b} as needed in calculating $\chi_{g, \max}$ from \eqref{gm:xmax}. In this way, in terms of $\chi_{g, \max}$ as defined in \eqref{gm:xmax}, spike nucleation for a $K$-spike solution is predicted to occur when
    \begin{equation}\label{gm:nuc_D}
        D_L < D_{L, K}^{nuc} \equiv \frac{2}{K^2 \left[\chi_{g, \max}\right]^2} \, .
    \end{equation}
    Equivalently, in terms of the domain half-length $L$, we predict that spike nucleation occurs when
    \begin{equation}\label{gm:nuc_length}
        L > L_K^{nuc} \equiv \sqrt{\frac{D}{2}} K \chi_{g, \max} \, .
    \end{equation}
\end{result}

In Fig.~\ref{fig:GM_nuc_L} we show a very favorable comparison between the asymptotic prediction $L_{K}^{nuc}$ in \eqref{gm:nuc_length} of the critical length for the onset of spike nucleation with the location of the saddle-node bifurcation point as computed from \eqref{GM_Lagrange} using \textit{pde2path} \cite{pde2path} when $\varepsilon = 0.01$ and $D = 1$.

\begin{figure}[htbp]
    \centering
    \includegraphics[width=0.55\textwidth, height=6.0cm]{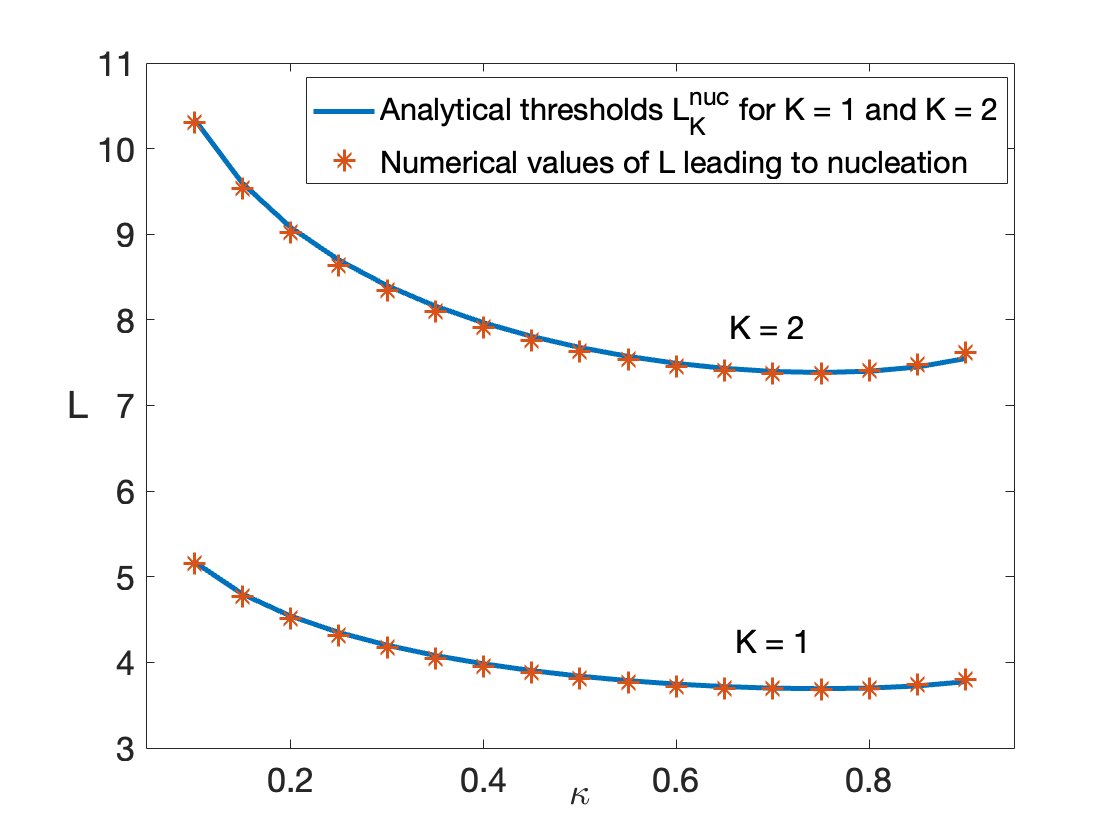}
    \caption{Comparison between asymptotic and numerical results for the nucleation threshold $L_K^{nuc}$ versus $\kappa$ for $K = 1$ and $K = 2$. The solid curves are the asymptotic results given in \eqref{gm:nuc_length}. The red stars are the numerical results for the saddle-node bifurcation point as computed from \eqref{GM_Lagrange} using \textit{pde2path} \cite{pde2path}. Parameters: $\varepsilon = 0.01$ and $D = 1$.}
    \label{fig:GM_nuc_L}
\end{figure}

\subsection{Global bifurcation diagram and full PDE simulations}\label{sec:GM_fullpde}

For $\kappa = 0.5$, $\varepsilon = 0.01$ and $D = 1$, the numerically computed global bifurcation diagram, as obtained by path-following the single-spike branch for the GM model \eqref{GM_Lagrange} using \textit{pde2path} \cite{pde2path}, is shown in Fig.~\ref{fig:GM_nuc_bif}. The saddle-node bifurcation point at point (b) in this figure given by $L \approx 3.81$, signifying the onset of spike nucleation, is well-approximated by the value $L_1^{nuc}$ in \eqref{gm:nuc_length} from our asymptotic theory. This global bifurcation diagram, and the resulting solution behavior on the various branches, is qualitatively identical to that for the Schnakenberg and Brusselator models shown in Fig.~\ref{fig:Sch_bif_nuc} and Fig.~\ref{fig:Bru_bif_nuc}, respectively, for parameter regimes in these models where spike nucleation is the dominant mechanism for creating new spikes as $L$ increases.

\begin{figure}[htbp]
    \centering
    \includegraphics[width=0.95\textwidth, height=6.0cm]{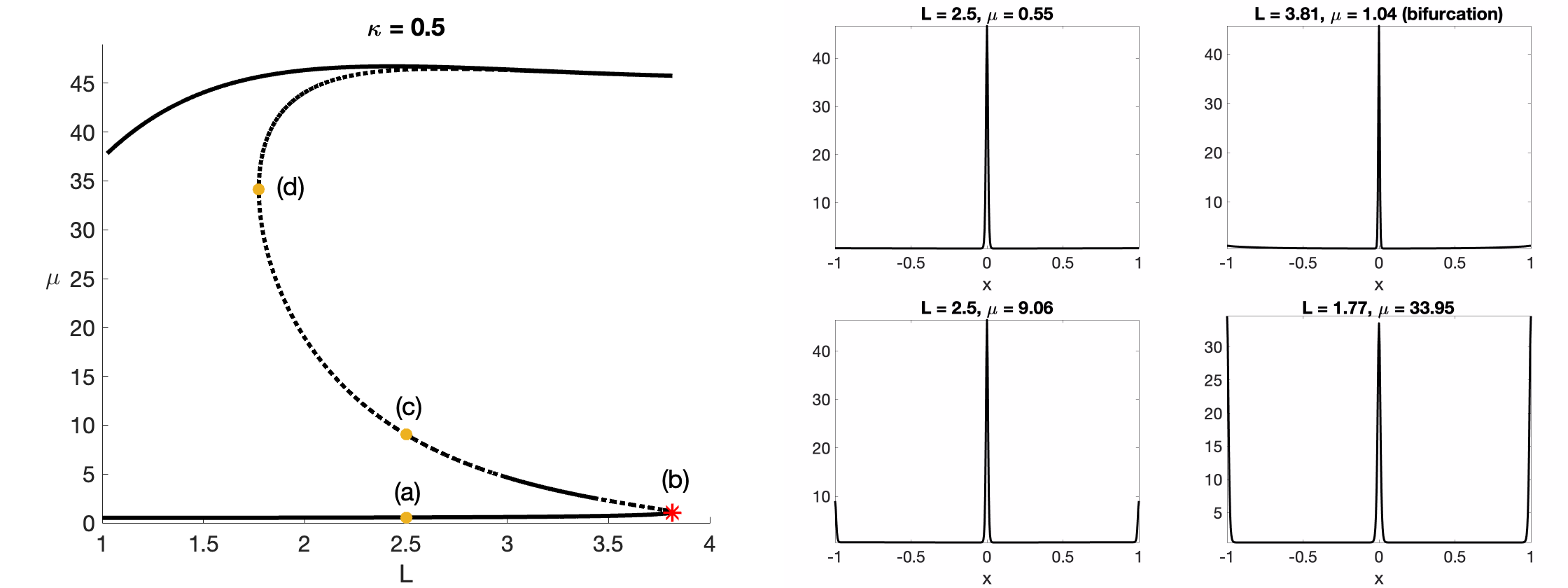}
    \caption{Left panel: Global bifurcation diagram of $\mu=v(1)$ versus $L$ for single-spike steady-states for the GM model \eqref{GM_Lagrange} as computed using \textit{pde2path} \cite{pde2path} for $\kappa = 0.5$, $\varepsilon = 0.01$ and $D = 1$. Since $\kappa<1$, we predict that spike nucleation occurs as $L$ is increased starting from point (a) on the linearly stable lower branch. The red star is the saddle-node point that signifies the onset of spike nucleation behavior. The global bifurcation diagram is similar to that for the Schnakenberg and Brusselator models whenever spike nucleation occurs. Right panel: Spike profile $\mathcal A(x)$ and bifurcation values at the indicated points in the left panel.}
    \label{fig:GM_nuc_bif}%
\end{figure}

\begin{figure}[htbp]
    \centering
    \includegraphics[width=0.66\textwidth, height=6.0cm]{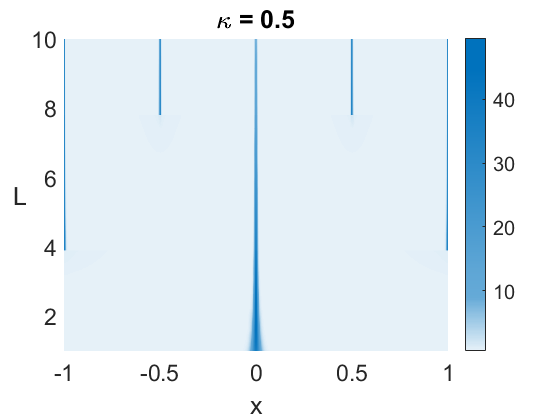}
    \caption{Time-dependent PDE simulations of \eqref{GM_Lagrange} using \textit{FlexPDE} \cite{flexpde2015} illustrating spike nucleation behavior as the domain half-length $L$ slowly increases as $L = e^{\rho t}$ with $\rho = 10^{- 4}$. The transitions where boundary spikes first emerge and then later when spikes are nucleated between the interior and boundary spikes are well-approximated by the critical lengths in Fig.~\ref{fig:GM_nuc_L}. Parameters: $\varepsilon = 0.01$, $D = 1$ and $\kappa=0.5$.}
    \label{fig:GM_flexpde_plot}%
\end{figure}

For $\varepsilon = 0.01$, $D = 1$ and $\kappa = 0.5$, in Fig.~\ref{fig:GM_flexpde_plot} we show full time-dependent PDE simulations of \eqref{GM_Lagrange} computed using \textit{FlexPDE} \cite{flexpde2015} as the domain half-length $L$ slowly increases in time by $L = e^{\rho t}$ with $\rho = 10^{- 4}$. As $L$ increases, boundary spikes first emerge when $L\approx 3.89$. As $L$ is increased further above $L \approx 7.8$, new spikes are nucleated at the midpoint locations between the interior spike and each boundary spike. These two transition values of $L$, estimated from our time-dependent PDE results, are well-approximated by the critical thresholds $L_1^{nuc}$ and $L_2^{nuc}$ with $\kappa = 0.5$ for the non-existence of the outer solution as shown in Fig.~\ref{fig:GM_nuc_L}.

\subsection{Asymptotics of the outer solution for $\kappa$ small} \label{sec:gm_small_a}

In this subsection, we approximate the outer problem \eqref{gm:outer_probfull} by a linear problem when $\kappa\ll 1$. In the limit $\kappa \ll 1$, no spike nucleation behavior occurs.

For $\kappa \ll 1$, we obtain from \eqref{GM_out1} that $\mathcal A \sim \kappa + \mathcal O(\kappa^2)$ in the outer region. As a result, from \eqref{GM_out2}, we obtain that $\mathcal H$ satisfies
\begin{equation}\label{gm_small:outer}
    D_L \mathcal H_{xx} - \mathcal H = \kappa^2 + \mathcal O(\kappa^3) \, , \quad 0^+ < x < \ell \, ; \qquad \mathcal H_x(\ell) = 0 \, ,
\end{equation}
with the matching condition $\mathcal H(0^+) = H_0/\varepsilon_L$. Upon neglecting the $\mathcal O(\kappa^3)$ term in \eqref{gm_small:outer}, we calculate that
\begin{equation}\label{gm:small-solve}
    \mathcal H(x) = \kappa^2 + \left(\frac{H_0}{\varepsilon_L} -
    \kappa^2\right) \frac{\cosh\left[(\ell - |x|)/\sqrt{D_L}\right]}
    {\cosh\left(\ell/\sqrt{D_L}\right)} \, .
\end{equation}
By imposing the second matching condition given by \eqref{gm:hx_match}, we obtain that $H_0$ must satisfy
\begin{equation}\label{gm:small_H0eq}
    \frac{H_0}{\varepsilon_L \sqrt{D_L}} \tanh\left[\frac{\ell}{\sqrt{D_L}}\right] = \frac{3 H_0^2}{D_L \varepsilon_L} \left(1 - 2 \gamma\right)^{1/2} \, .
\end{equation}
For $\kappa \ll 1$, \eqref{gm:cond1_new_b} yields $\gamma = \mathcal O(\kappa \varepsilon_L/H_0) \ll 1$. By setting $\gamma \ll 1$ in \eqref{gm:small_H0eq}, we get that
\begin{equation}\label{gm:small_H0val}
    H_0 \sim \frac{\sqrt{D_L}}{3} \tanh\left(\frac{\ell}{\sqrt{D_L}}
    \right) \, .
\end{equation}
In this way, for $\kappa \ll 1$ our asymptotic result for $\mathcal H(0) = H_0/\varepsilon_L$ is, by using the scaling relation \eqref{scale}, that
\begin{equation}\label{gm:small_H0fin}
    \mathcal H(0) \sim \frac{\sqrt{D}}{\varepsilon}
    \tanh\left[\frac{\ell}{\sqrt{D_L}}\right] \, .
\end{equation}
In contrast, for $\kappa = \mathcal O(1)$, we predict that $\mathcal H(0) \sim H_{0, L}$, where $H_{0, L}$ must be computed from the coupled nonlinear algebraic system \eqref{gm:cond1_new} and \eqref{properchi_gm}.

Although \eqref{gm:small_H0val} and \eqref{gm:small_H0fin} were derived only for the limit $\kappa \ll 1$, in Fig.~\ref{fig:GM_H0} we show that the simple closed-form result \eqref{gm:small_H0val} agrees rather well with full numerical results computed from \eqref{GM_Lagrange} using \textit{pde2path} \cite{pde2path} as $D_L$ is varied for both $\kappa = 0.1$ and $\kappa = 0.5$. In particular, the results in Fig.~\ref{fig:GM_H0} show that $H_0$ has a rather weak dependence on $\kappa$.

\begin{figure}[htbp]
    \centering
    \includegraphics[width=0.55\textwidth, height=6.0cm]{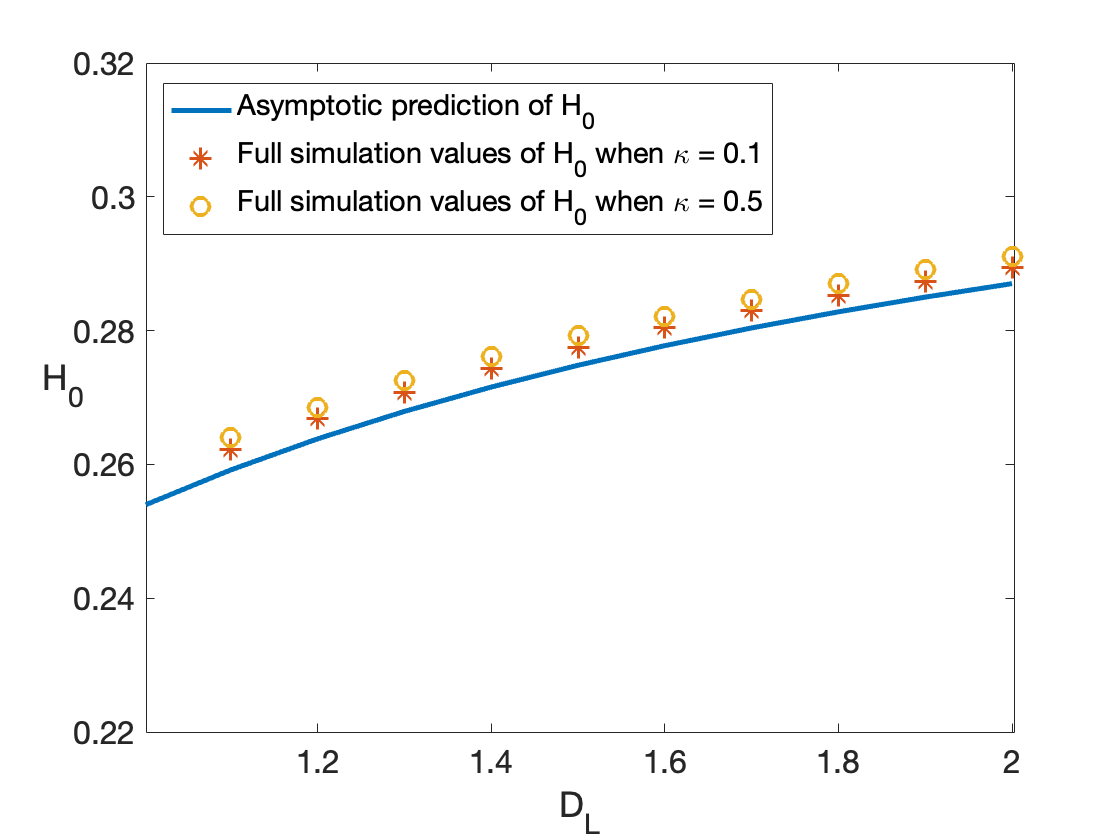}
    \caption{Comparison between asymptotic and numerical results for $H_0$ for two values of $\kappa$ as $D_L$ is varied. The solid curve is the asymptotic result given in \eqref{gm:small_H0val} derived in the limit $\kappa\ll 1$. The red and yellow stars are obtained by full simulations of the GM model \eqref{GM_Lagrange} using \textit{pde2path} \cite{pde2path} with $\kappa = 0.1$ and $\kappa = 0.5$, respectively. Parameters: $\varepsilon = 0.01$ and $\ell = 1$.}
    \label{fig:GM_H0}%
\end{figure}

\section{Conditions for the absence of nucleation instability} \label{sec:no_nucleation}

Our analyses in \S \ref{sec:sch}--\S \ref{sec:gm} have shown that, on certain parameter ranges, spike nucleation behavior can occur as the domain length increases for the Schnakenberg, Brusselator and GM models in the limit $\varepsilon_L\ll 1$. An essential ingredient for the occurrence of spike nucleation behavior was that there is a saddle-node bifurcation point at some finite critical value of $D_L$, at which the outer solution ceases to exist.

One key condition for the existence of this saddle-node point is that the parameters in each RD system are such that there is no spatially homogeneous steady-state solution on the range of well-posedness for the outer problem. More specifically, this is evidenced by our derivation in Lemmas \ref{lemma:sch} and \ref{lemma:bruss} where we require that $R_s(v) < 0$ in \eqref{Sch_fR} and $R_b(v) < 0$ in \eqref{Bruss_fR} for the Schnakenberg and Brusselator models, respectively, in the range $a < v < 2 a$ where the outer problem is well-posed. Under this condition, we obtain, from \eqref{properchi_sch} and \eqref{properchi_bruss}, that the integrals defining $\chi_s(\mu)$ and $\chi_b(\mu)$ are monotone increasing in $\mu$ but have {\em finite values} as $\mu \to 2 a$ from below. These limiting values, based on the non-existence of the outer asymptotic solution, were used in \eqref{sch:thresh_nucD} and \eqref{bruss:thresh_nucD} for the Schnakenberg and Brusselator models, respectively, to obtain a leading-order prediction for the critical threshold of $D_L$ where a saddle-node point along the single-spike solution branch must occur. The existence of such a saddle-node point is the signature for the onset of spike nucleation behavior. A completely similar characterization occurs for the GM model as evidenced by Lemma \ref{lemma:gm} and the corresponding result given in \eqref{gm:nuc_D}.

For parameter ranges where each of our RD systems has a spatially homogeneous steady state, denoted by $v_{\infty}$, on the range of well-posedness of the outer problem, it follows that $R(v_{\infty})=0$ with $R^{\prime}(v_{\infty})>0$, where $v_\infty$ must satisfy the bounds in $v$ for which the outer problem is well-posed. Here, $R(v)$ is the right-hand side of the three outer problems \eqref{Sch_fullouter}, \eqref{Bruss_fullouter} or \eqref{gm:outer_probfull}. More specifically, we calculate that 
%\begin{subequations}
    \begin{align}
        v_\infty &= b + a < 2 a \, , \quad \mbox{when} \quad a > b \, , \quad \mbox{(Schnakenberg model)} \, ,
        \label{eq:vinfty_sch}
        \\
        v_\infty &= \frac{a}{1 - f} < 2 a \, , \quad \mbox{when} \quad 0 < f < 1/2 \, , \quad \mbox{(Brusselator model)} \, ,
        \label{eq:vinfty_bru}
        \\
        v_\infty &= 1 + \kappa < 2 \kappa \, , \quad \mbox{when} \quad \kappa > 1 \, , \quad \mbox{(GM model)} \, .
        \label{eq:vinfty_GM}
    \end{align}
%\end{subequations}
To analyze the outer problem for each of our three RD models, we need only modify our previous analysis by requiring that $\mu \equiv v(\ell) < v_\infty$. More specifically, we observe that $\chi(\mu)$ for either \eqref{properchi_sch}, \eqref{properchi_bruss} or \eqref{properchi_gm} is monotone increasing on $v(0^+) < \mu < v_\infty$ and that $\lim_{\mu \to v_\infty^-} \chi(\mu) = +\infty$ owing to the non-integrability of the integrals at $\mu = v_\infty$ that define $\chi(\mu)$. As a result, we conclude that {\em for any} $D_L > 0$ there is a unique $\mu = \mu^{\star}$ in $v(0^+) < \mu^{\star} < v_\infty$ where either \eqref{properchi_sch}, \eqref{properchi_bruss}, or \eqref{properchi_gm} has a solution.

\begin{figure}[htbp]
    \centering
    \includegraphics[width=0.99\textwidth, height=5.5cm]{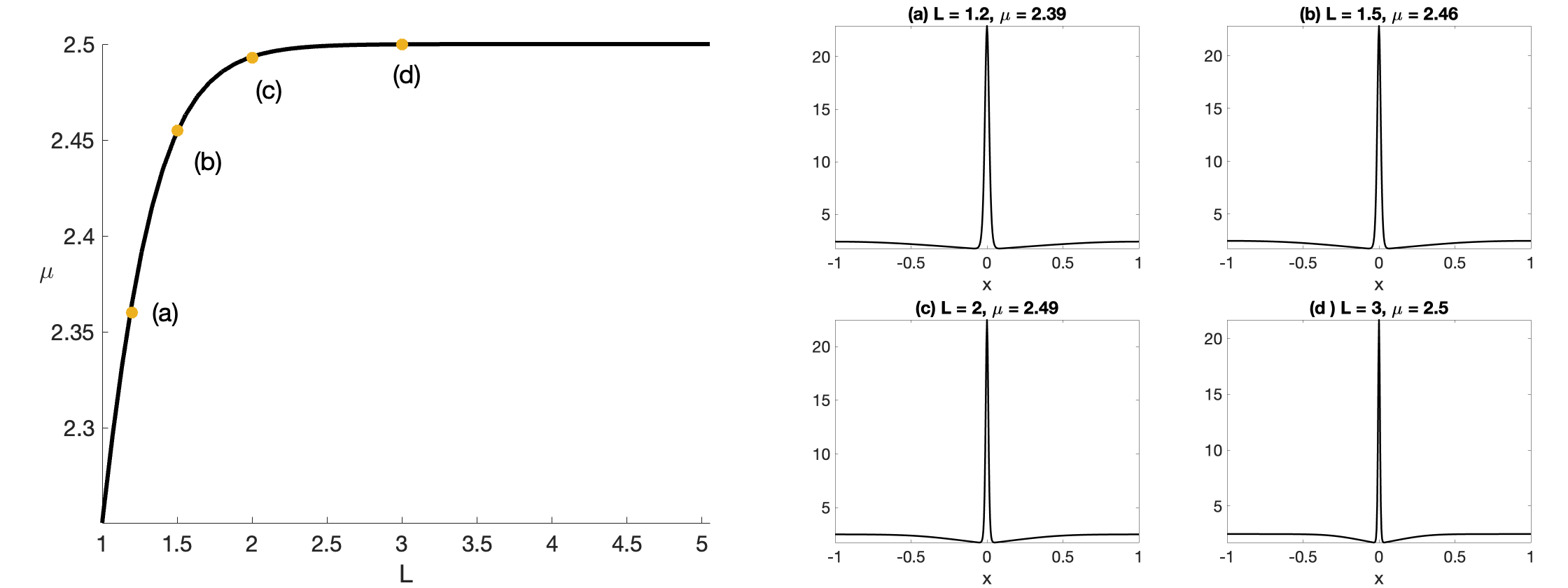}
    \caption{Left panel: Global bifurcation diagram of $\mu = v(1)$ versus $L$ for single-spike steady-state solutions of the Schnakenberg model \eqref{Sch_core} computed using pde2path \cite{pde2path}. Parameters: $a = 1.5$, $b = 1$, $\varepsilon = 0.01$ and $D = 2$. Since $a > b$, a spatially uniform steady-state occurs and there is no longer any saddle-node bifurcation on this branch. Right panel: Spike profile $v(x)$ at the indicated points in the left panel.}
    \label{fig:Sc_novel_bif}
\end{figure}

In Fig.~\ref{fig:Sc_novel_bif} we show a global bifurcation diagram obtained from path-following a single-spike steady-state solution of the Schnakenberg model \eqref{Sch_core} using pde2path \cite{pde2path} for the parameters $a = 1.5$, $b = 1$, $\varepsilon = 0.01$ and $D = 2$. Since $a > b$, we observe that there is no longer any saddle-node bifurcation along this branch, and so we predict that no spike nucleation events will occur. Qualitatively similar global bifurcation diagrams for the Brusselator model \eqref{Brusselator_Lagrange} with $f = 0.3 < 1/2$ and for the GM model \eqref{GM_Lagrange} with $\kappa = 1.5 > 1$ are shown in Fig.~\ref{fig:Br_novel_bif} and Fig.~\ref{fig:GM_novel_bif}, respectively.

In summary, we conclude that the outer problems for our three RD models are uniquely solvable for any $D_L>0$ whenever the RD system has a spatially homogeneous steady-state solution on the range where the outer problem is well-posed. This precludes the existence of a saddle-node bifurcation point in $D_L$ for the outer solution, and, consequently, no spike nucleation events will occur as the domain length increases. Moreover, in these parameter regimes, we remark that there exists a spike solution on the infinite line since we can impose $v \to v_\infty$ as $x \to \pm \infty$. We emphasize that since the outer problems were derived under the assumption of a large diffusivity ratio, the discussion above only pertains to the range where $D_L = \mathcal O(1)$ and not to the weak interaction regime where $D_L = \mathcal O(\varepsilon_L^2)$.

\begin{figure}[htbp]
    \centering
    \includegraphics[width=0.99\textwidth, height=5.5cm]{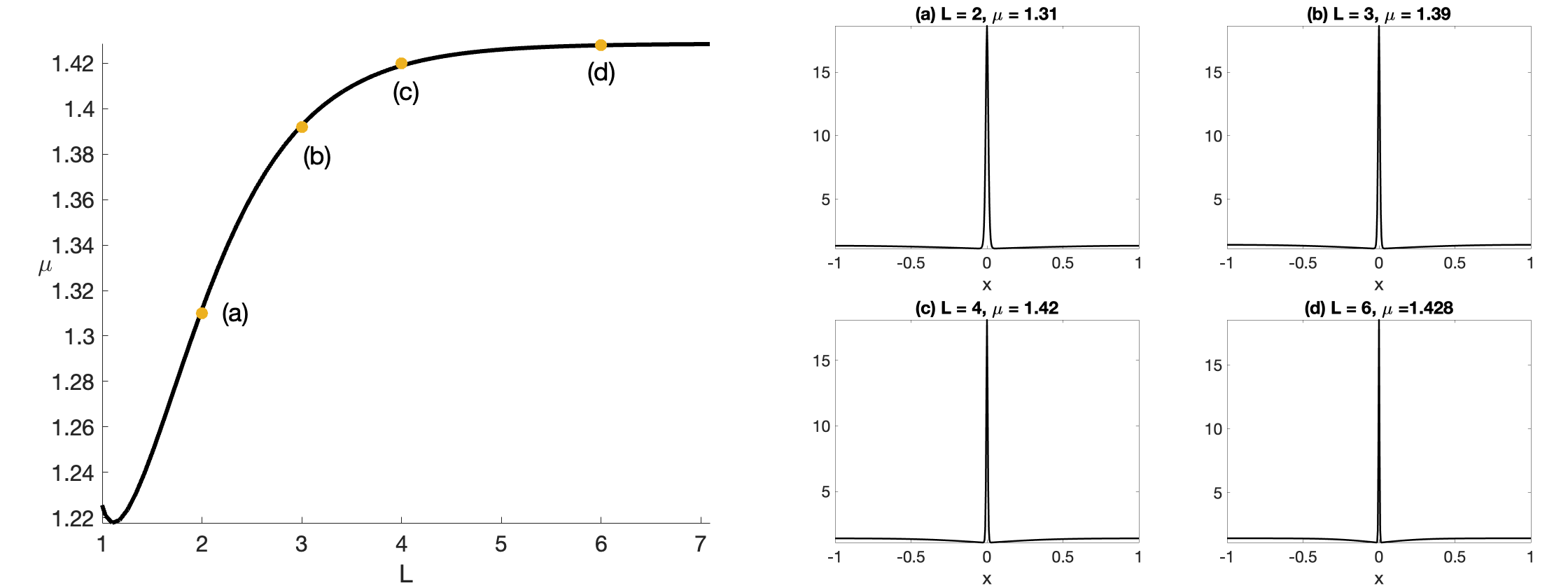}
    \caption{Same plot as in Fig.~\ref{fig:Sc_novel_bif} but for the Brusselator model \eqref{Brusselator_Lagrange} with $f = 0.3$, where a spatially homogeneous steady-state occurs. Remaining parameters: $a = 1$, $\varepsilon = 0.01$ and $D = 2$. Since $f < 1/2$, there is no longer any saddle-node bifurcation and spike nucleation is precluded.}
    \label{fig:Br_novel_bif}
\end{figure}

\begin{figure}[htbp]
    \centering
    \includegraphics[width=0.99\textwidth, height=5.5cm]{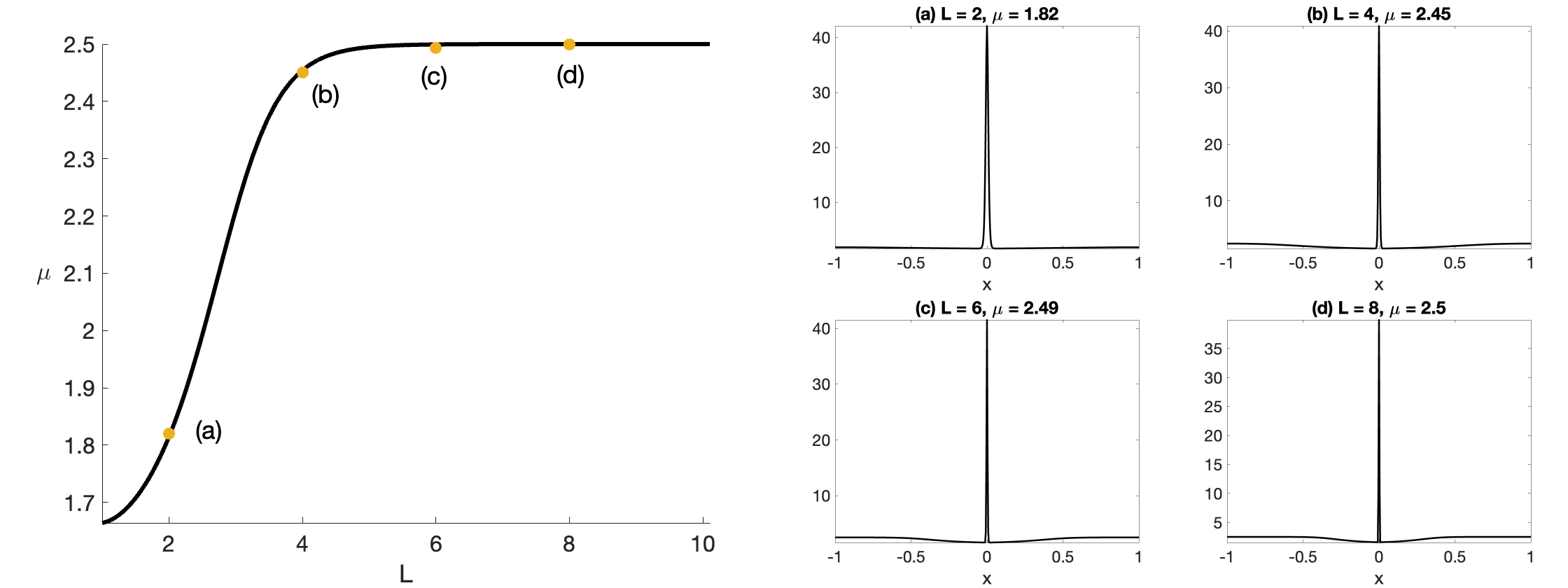}
    \caption{Same plot as in Fig.~\ref{fig:Sc_novel_bif} but for the GM model \eqref{GM_Lagrange} with $\kappa = 1.5$. Remaining parameters: $\varepsilon = 0.01$ and $D = 1$. Since $\kappa > 1$, a spatially uniform steady-state occurs. There is no longer any saddle-node bifurcation and spike nucleation behavior is precluded.}
    \label{fig:GM_novel_bif}
\end{figure}

For $\varepsilon_L\ll 1$, our next result shows that whenever $b < a$, spike self-replication will not occur for the Schnakenberg model \eqref{Sch_core}. As a result, we conclude that there is always a one-spike solution for any $L > 0$.

\begin{lemma}\label{lemma:norep_sch}
    Suppose that $0<b<a$. Then, for $\varepsilon/\sqrt{D} \ll 1$, spike self-replication will not occur for \eqref{Sch_core}.
\end{lemma}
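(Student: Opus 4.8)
The plan is to show that, when $0<b<a$, the core-problem parameter $B$ stays strictly below the self-replication fold value $B_c\approx 1.347$ for \emph{every} $L>0$, so that the saddle-node of the core problem in Fig.~\ref{fig:bif_core} — the trigger for self-replication — is never reached. The key structural observation is that the condition $b<a$ places the Schnakenberg outer problem in precisely the regime analyzed in \S\ref{sec:no_nucleation}: the homogeneous state $v_\infty=a+b$ obeys $a<v_\infty<2a$ (cf.~\eqref{eq:vinfty_sch}) with $R_s(v_\infty)=0$ and $R_s'(v_\infty)=1>0$. On the relevant subinterval $a<v<v_\infty$ one still has $R_s(v)<0$, so the monotonicity argument of Lemma~\ref{lemma:sch} and the integral estimates of \S\ref{sec:no_nucleation} apply verbatim on $(v(0^+),v_\infty)$, with only the sign change of $R_s$ at $v_\infty$ requiring care.

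First I would invoke the conclusion of \S\ref{sec:no_nucleation} that $\chi_s(\mu)$ is monotone increasing on $v(0^+)<\mu<v_\infty$ with $\lim_{\mu\to v_\infty^-}\chi_s(\mu)=+\infty$. Since the relation $\chi_s(\mu)=\sqrt{2/D_L}\,\ell$ determines $\mu=v(\ell)$, this confines $\mu$ to $(v(0^+),v_\infty)$ for every $D_L>0$ and forces $\mu\to v_\infty^-$ as $L\to\infty$. Because $B^2=2\left[\mathcal G_s(\mu)-\mathcal G_s(v(0^+))\right]$ with $\mathcal G_s'>0$ on $(a,2a)$, the map $\mu\mapsto B$ is increasing, so $B$ attains its supremum only in the limit $\mu\to v_\infty$. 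This produces the uniform bound $B^2<2\left[\mathcal G_s(v_\infty)-\mathcal G_s(v(0^+))\right]$, valid for all $L>0$.

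The remaining task is to verify that this supremum lies below $B_c$. In the regime $\varepsilon/\sqrt{D}\ll 1$ we have $v(0^+)\to a$ by \eqref{sch:match_2}, so using the explicit first integral \eqref{G_Sch} and writing $t\equiv b/a\in(0,1)$, a short computation collapses the bound to $B^2<2\left[\log(1+t)-t/(1+t)\right]$. The function $h(t)\equiv\log(1+t)-t/(1+t)$ has $h'(t)=t/(1+t)^2>0$, hence is increasing on $(0,1)$ and bounded above by $h(1)=\log 2-\tfrac12\approx 0.193$. Therefore $B^2<2\left(\log 2-\tfrac12\right)\approx 0.386$, which is well below $B_c^2\approx 1.815$; consequently $B<B_c$ uniformly in $L$ and the core fold is never encountered, precluding self-replication while guaranteeing a one-spike solution for every $L>0$.

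I expect the main obstacle to be this last quantitative step: a priori there is no structural reason why the largest attainable $B$, namely its value at the homogeneous state $v_\infty$, must respect the inner fold threshold $B_c$, which is computed from a completely separate problem. The plan reduces this to the explicit maximization of $h(t)$ above, so the crux is the clean cancellation that yields $h$ and its monotonicity. Finally I would check that reinstating the $\mathcal O(\varepsilon/\sqrt{D})$ correction in $v(0^+)$ from \eqref{sch:match_2} cannot push the bound past $B_c$; this is immediate given the large gap between $0.386$ and $1.815$, and makes the $\varepsilon/\sqrt{D}\ll 1$ hypothesis of the lemma sufficient.
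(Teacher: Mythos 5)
Your proposal is correct and follows essentially the same route as the paper: exploit the monotonicity of $B$ in $\mu$ on $(v(0^+),v_\infty)$ with $v_\infty=a+b$, evaluate the supremum $2\left[\mathcal G_s(v_\infty)-\mathcal G_s(a)\right]$ explicitly via \eqref{G_Sch} (your $h(t)$ with $t=b/a$ is the paper's $\mathcal F_s(z)^2/2$ with $z=1+t$), and check that the resulting maximum $B^2=-1+2\log 2\approx 0.386$ lies below $B_c^2\approx 1.815$. Your version is, if anything, slightly more careful numerically, since the paper's stated value $B_{\max}\approx 0.386$ is actually $B_{\max}^2$ (the correct $B_{\max}=\sqrt{0.386}\approx 0.62$, still comfortably below $B_c$).
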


\begin{proof}
    Since $\mathcal G_s^{\prime}(\mu) > 0$ on $a < \mu < v_\infty \equiv a + b$, we have for \eqref{sch:integral_Bfinal} that $B$ is a monotonically increasing function of $\mu$ on this interval. As such, by setting $\mu = v_\infty$ and $v(0^+) = a$, it follows that if we can show that $B = \sqrt{2} \left[\mathcal G_s (a + b) - \mathcal G_s(a)\right]^{1/2}$ satisfies $B < B_c \approx 1.347$ whenever $0 < b < a$, then for any $\mu^{\star}$ in $a < \mu^{\star} < v_\infty$, there is a unique $B < B_c$ on the primary branch for the core problem (see the left panel of Fig.~\ref{fig:bif_core}). As a result, a one-spike solution will always exist and spike self-replication will not occur. To establish that $B < B_c$ for $b < a$, we first use \eqref{G_Sch} for $\mathcal G_s(\xi)$ to obtain after some algebra that
    \begin{equation*}
        \mathcal G_s(v_\infty) - \mathcal G_s(a) = - \frac{b}{a + b} + \log\left(\frac{a + b}{a}\right) \, .
    \end{equation*}
    Upon defining $z \equiv {(a + b)/a}$, where $1 < z < 2$, we conclude that
    \begin{equation}\label{sch:norep}
        B = \mathcal F_s(z) \equiv \sqrt{2} \left(- 1 + \frac{1}{z} + \log{z}\right)^{1/2} \, , \quad \mbox{for} \quad 1 < z < 2 \, .
    \end{equation}
    We readily calculate that $\mathcal F_s(z) > 0$ and $\mathcal F_s^{\prime}(z) > 0$ on $1 < z < 2$, and so the maximum value of $B$ is
    \begin{equation*}
        B_{\max} = \lim_{z \to 2^-} \mathcal F_s(z)= \sqrt{- 1 + 2 \log{2}} \approx 0.386 \, .
    \end{equation*}
    Since $B_{\max} < B_c \approx 1.347$, it follows that $B(z) < B_c$ on $1 < z < 2$. We conclude that spike self-replication can never occur when $b < a$.
\end{proof}

Next, we establish a similar result for the Brusselator model \eqref{Brusselator_Lagrange} that is based, in part, on the range of values of the fold point $B_c(f)$ for the core problem plotted on $0 < f < 1/2$, as seen from Fig.~\ref{fig:Bru_core_bif_c}.

\begin{lemma} \label{lemma:norep_bruss}
    Suppose that $0 < f < 1/2$. Then, for $\varepsilon/\sqrt{D} \ll 1$, spike self-replication can not occur for the Brusselator model \eqref{Brusselator_Lagrange}.
\end{lemma}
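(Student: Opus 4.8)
The plan is to mirror the argument used for Lemma~\ref{lemma:norep_sch}. The essential observation is that when $0<f<1/2$ the Brusselator admits a spatially homogeneous steady state $v_\infty = a/(1-f)$, which by \eqref{eq:vinfty_bru} satisfies $v_\infty < 2a$ and therefore lies strictly inside the well-posedness window $a<v<2a$ of the outer problem \eqref{Bruss_fullouter}. The relevant range for the boundary value $\mu = v(\ell)$ is consequently $a < \mu < v_\infty$. First I would verify that $R_b(\xi) < 0$ and $g(\xi) > 0$ on $a < \xi < v_\infty$ (the former since $R_b$ is increasing with $R_b(a) = -af<0$ and $R_b(v_\infty)=0$, the latter since $v_\infty < 2a$), so that $\mathcal{G}_b^{\prime}(\xi) = -R_b(\xi)g(\xi) > 0$ there. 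By \eqref{Bruss:integral_Bfinal} this makes $B$ a strictly increasing function of $\mu$, so the matched value of $B$ attains its largest admissible value $B_{\max}$ as $\mu \to v_\infty$, where, to leading order for $\varepsilon/\sqrt{D}\ll 1$, we set $v(0^+)=a$.

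Next I would evaluate $B_{\max}^2 = (2/f^2)\left[\mathcal{G}_b(v_\infty) - \mathcal{G}_b(a)\right]$ using the first integral \eqref{G_B}. A short computation gives $\mathcal{G}_b(v_\infty)-\mathcal{G}_b(a) = -(1-f)\left[f+\log(1-f)\right]$, so that, introducing $z \equiv v_\infty/a = 1/(1-f)\in(1,2)$, the amplitude parameter collapses to the $a$-independent form
\begin{equation*}
  B_{\max} = \mathcal{F}_b(z) \equiv \left(\frac{2\left[z\log z - (z-1)\right]}{(z-1)^2}\right)^{1/2} \, , \qquad 1 < z < 2 \, ,
\end{equation*}
in exact analogy with $\mathcal{F}_s(z)$ in \eqref{sch:norep}. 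To bound this I would show $\mathcal{F}_b(z) < 1$, which after clearing denominators is equivalent to $\eta(z) \equiv z^2 - 1 - 2z\log z > 0$ on $1<z<2$. This follows from $\eta(1)=\eta^{\prime}(1)=0$ together with $\eta^{\prime\prime}(z) = 2(z-1)/z > 0$ for $z>1$, which forces $\eta^{\prime} > 0$ and hence $\eta > 0$ on $(1,2)$. Thus $B_{\max}(f) < 1$ for every $0<f<1/2$.

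Finally I would invoke the numerically computed fold-point curve of Fig.~\ref{fig:Bru_core_bif_c}, which shows that $B_c(f) > 1$ throughout $0<f<1/2$. Combining this with the previous bound, every admissible $\mu$ yields $B \le B_{\max}(f) < 1 < B_c(f)$, so the matched value of $B$ always lies strictly below the saddle-node of the core problem \eqref{bruss:rep_core}; the one-spike solution therefore persists on the primary branch for all $L>0$ and no self-replication can be triggered. The main obstacle is that, unlike the elementary analytic bound $B_{\max}<1$, the inequality $B_c(f)>1$ is not available in closed form: it rests on the numerically continued fold-point data $B_c(f)$ for the Brusselator core problem. Everything else reduces to the routine monotonicity estimates above, so the genuine content of the argument is pinning down the range of $B_c(f)$ on $(0,1/2)$ from Fig.~\ref{fig:Bru_core_bif_c}.
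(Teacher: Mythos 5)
Your proposal is correct and follows essentially the same route as the paper: you use the monotonicity of $B$ in $\mu$ from \eqref{Bruss:integral_Bfinal}, evaluate at $\mu = v_\infty = a/(1-f)$ with $v(0^+)=a$, reduce to the same $a$-independent expression for $B_{\max}$, and then invoke the numerically computed fold curve $B_c(f)>1$ on $0<f<1/2$ from Fig.~\ref{fig:Bru_core_bif_c}. The only (harmless) difference is cosmetic: you substitute $z=1/(1-f)$ and prove $B_{\max}<1$ via the convexity of $\eta(z)=z^2-1-2z\log z$, whereas the paper asserts $\mathcal F_b^{\prime}(f)<0$ and identifies the supremum $1$ as the limit $f\to 0^+$; your version actually spells out the elementary inequality more completely.
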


\begin{proof}
    The proof is similar to that in Lemma \ref{lemma:norep_sch}. Since $\mathcal G_b^{\prime}(\mu) > 0$ on $a < \mu < v_\infty = a/(1 - f)$ when $0 < f < 1/2$, it follow from \eqref{Bruss:integral_Bfinal} that $B$ is a monotonically increasing function of $\mu$ on this interval in $\mu$ when $0 < f < 1/2$. As such, by setting $\mu=v_{\infty}$ and $v(0^+) = a$, it follows that if we can show that $B = \sqrt{2} f^{- 1} \left[\mathcal G_b(v_\infty) - \mathcal G_b(a)\right]^{1/2}$ satisfies $B < B_c(f)$ on $0 < f < 1/2$, then for any $\mu^{\star}$ in $a < \mu^{\star} < v_\infty$ and any $f$ in $0 < f < 1/2$, there is a unique $B < B_c$ on the primary branch for the Brusselator core problem (see Fig.~\ref{fig:Bru_core_bif}). As a result, a one-spike solution will always exist and spike self-replication will not occur. To establish that $B < B_c(f)$ for $0 < f < 1/2$, we first use \eqref{G_B} for $\mathcal G_b(\xi)$ to obtain after some algebra that
    \begin{equation*}
        \mathcal G_s(v_\infty) - \mathcal G_s(a) = - f (1 - f) - (1 - f) \log(1 - f) \, .
    \end{equation*}
    By using \eqref{Bruss:integral_Bfinal}, we conclude that
    \begin{equation}\label{bruss:norep}
        B = \mathcal F_b(f) \equiv \sqrt{2} \left[\frac{(1 - f)}{f^2} \left(- f - \log(1 - f)\right)\right]^{1/2} \, , \quad \mbox{for} \quad 0 < f < 1/2 \, .
    \end{equation}
    A simple calculation shows that $\mathcal F_b(f) > 0$ and $\mathcal F_b^{\prime}(f) < 0$ on $0 < f < 1/2$. Therefore, by using the Taylor series $\log(1 - f)\sim - f - {f^2/2}$ as $f \to 0$, we calculate that the maximum value of $B$ is
    \begin{equation*}
        B_{\max} = \lim_{f \to 0^+} \mathcal F_b(f) = 1 \, .
    \end{equation*}
    From the numerical results in Fig.~\ref{fig:Bru_core_bif_c}, we observe that $1 = B_{\max} < B_c(f)$ on $0 < f < 1/2$. We conclude that, for any $0 < f < 1/2$, we must have $B < B_c(f)$, and so spike self-replication can never occur.
\end{proof}

Finally, pulling together all the results in this section and \S \ref{sec:bruss} and \S \ref{sec:sch}, we arrive at the two-parameter summaries that were presented in Fig.~\ref{fig:phase_diag}, indicating the type of spike-generating mechanism that occurs as the domain half-length $L$ increases when $\varepsilon/\sqrt{D} \ll 1$.

\section{Dynamic transitions between multi-spike quasi-equilibria}\label{sec:transitions}

In this section, we turn to numerical bifurcation analysis to probe the structure of steady solution branches of our three RD systems on a static domain $|x|\leq L$, with the domain length $L$ as the primary bifurcation parameter. For each system, in Figs.~\ref{fig:bif_Sch_merge}--\ref{fig:bif_GM} we plot bifurcation diagrams of multi-spike equilibria, as computed using the numerical bifurcation and continuation package \textit{pde2path} \cite{pde2path}. In each figure, the vertical axis is the $L_2$ norm of the singularly perturbed solution component. Solution branches in red depict linearly stable $K$-spike equilibria, in which each successive branch to the right corresponds to a half-spike (i.e.~a boundary spike) increment. The blue dashed curves correspond to unstable multi-spike equilibria. In Figs.~\ref{fig:bif_Sch_merge}--\ref{fig:bif_GM}, we have overlaid in yellow the time-dependent numerical results for the $L_2$ norm, as computed from our three RD systems using \textit{FlexPDE} \cite{flexpde2015} with the specific domain growth rate $\rho = \varepsilon^2$. In this way, we can visualize how the quasi-steady state solutions ``jump'' to closely track other steady-state solution branches consisting of more spikes as $L$ slowly increases.

\begin{figure}[htbp]
    \centering
    \includegraphics[width=0.85\textwidth,height=6.0cm]{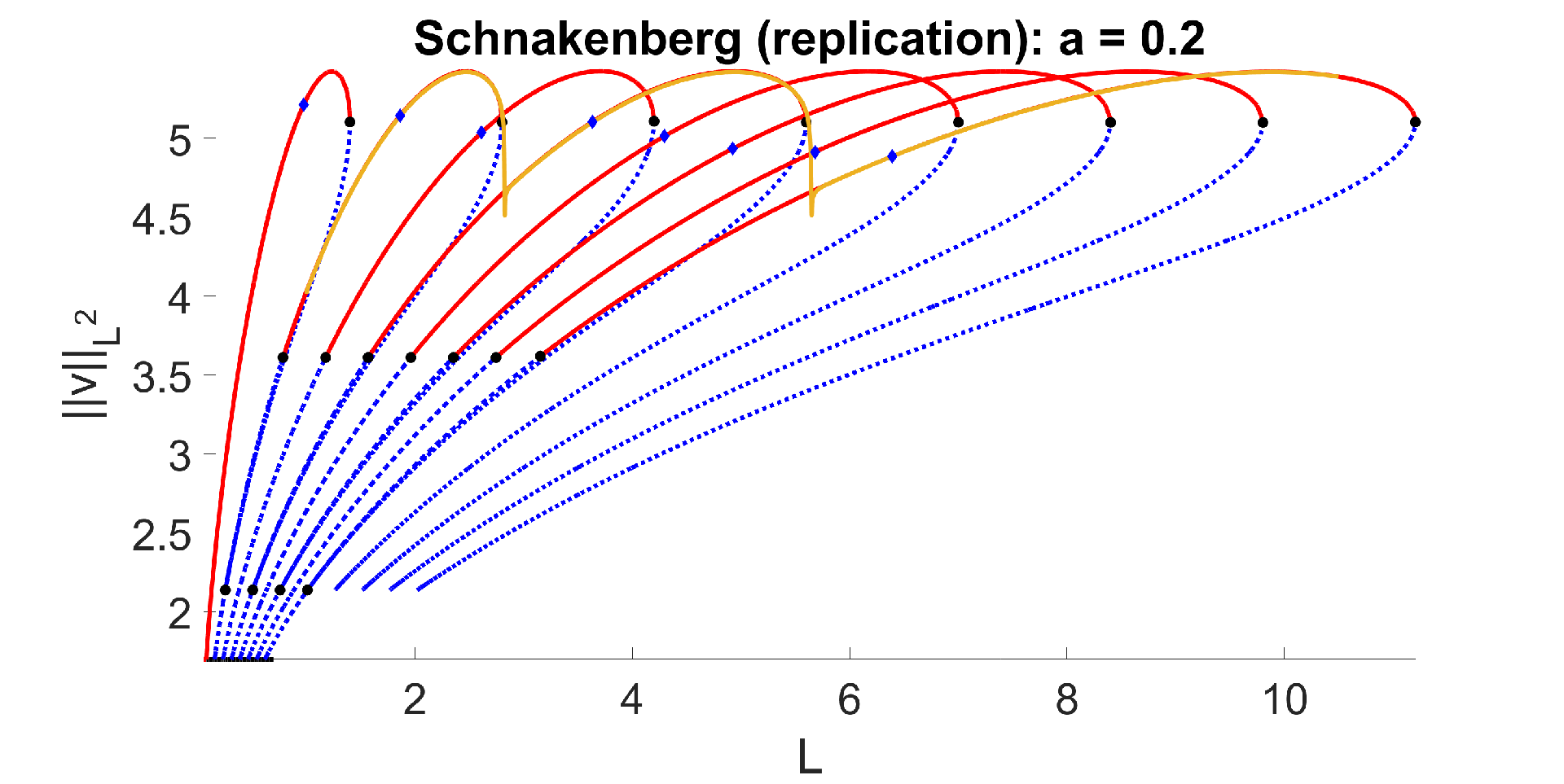}
    \includegraphics[width=0.85\textwidth,height=6.0cm]{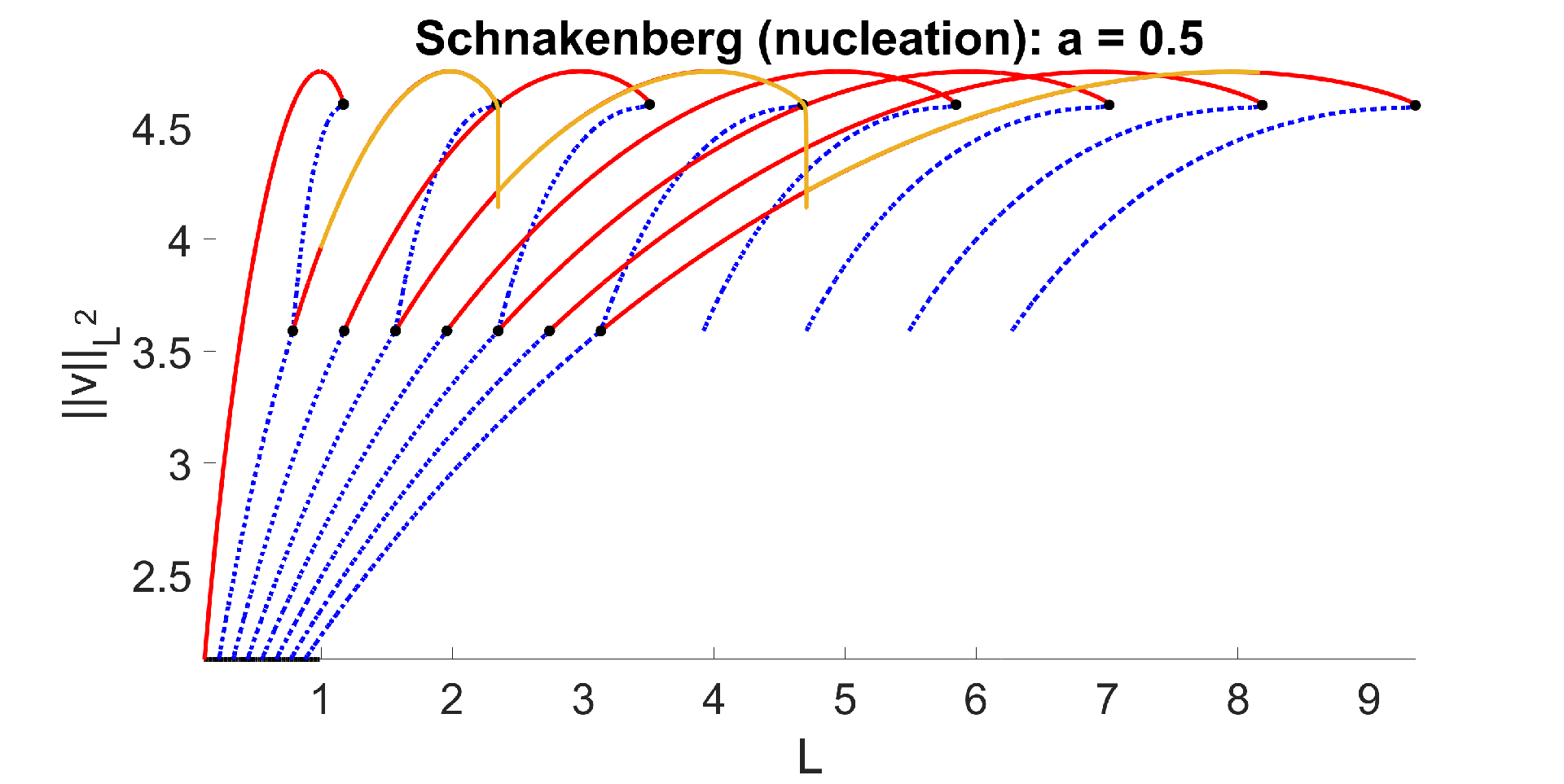}
    \caption{Plots of steady-state solution branches, with successive increments of a half-spike, for linearly stable multi-spike equilibria (red curves) overlaid with the time-dependent solutions (yellow curves) computed from full \textit{FlexPDE} \cite{flexpde2015} simulations of the Schnakenberg model \eqref{Sch_Lagrange}. The leftmost branch corresponds to a half-spike steady-state. The horizontal axis is the bifurcation parameter $L$ and the vertical axis is the $L_2$ norm of $v$. Parameters: $b = 1$, $\varepsilon = 0.04$, $D = 4$ and $\rho = \varepsilon^2 = 0.0016$. Top panel: For $a = 0.2$ spike self-replication occurs along the yellow curve. Bottom panel: For $a = 0.5$ spike nucleation occurs along the yellow curve. Black points represent bifurcation points that could be either a fold point (if the curve folds) or a branch point. Diamonds represent the points from where the solutions were considered for plotting in Figure \ref{fig:bif_Sch_sols}. Furthermore, continuous red (respectively, dotted blue) curves represent stable (respectively, unstable) solutions.}
    \label{fig:bif_Sch_merge}
\end{figure}

\begin{figure}[htbp]
    \centering
    \includegraphics[width=0.85\textwidth,height=6.0cm]{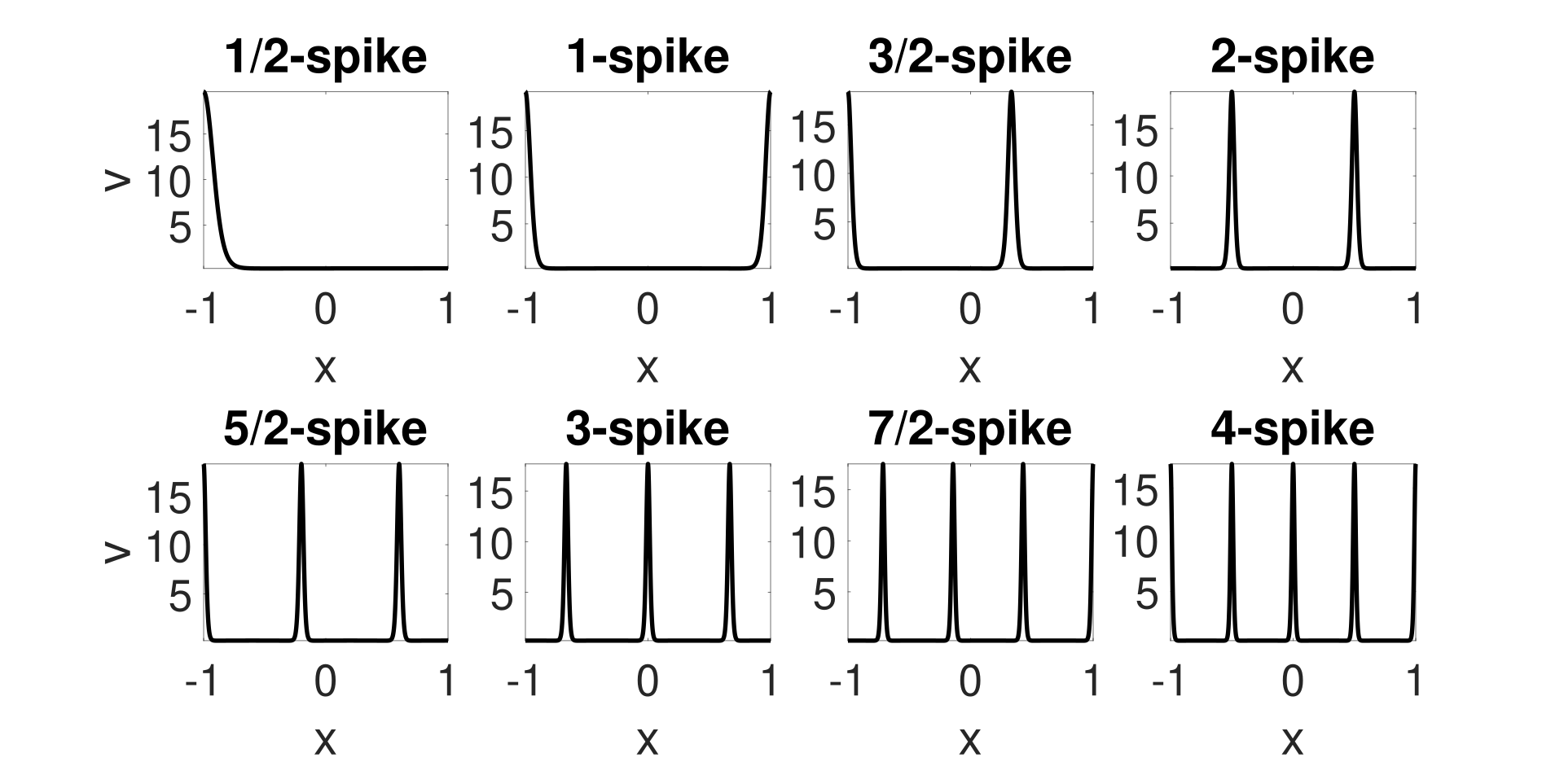}
    \caption{Plots of solutions in each of the branches in the top panel of Figure \ref{fig:bif_Sch_merge}. Each solution corresponds to the diamond-indicated point. Furthermore, the solutions are sorted according to the branch they belong to from left to right. Specifically, the solutions correspond, from left to right and top to bottom, to the parameter values $\left(L, \lVert v \rVert_{L^2}\right) \in \{(0.9729, 5.2111), (1.8597, 5.143), (2.6115, 5.036). (3.6315, 5.1044), (4.2945, 5.0125), (4.9228, 4.9332),
    \\
    (5.678, 4.9125), (6.3921, 4.8852)\}$.}
    \label{fig:bif_Sch_sols}
\end{figure}

\begin{figure}[htbp]
    \centering
    \includegraphics[width=0.85\textwidth,height=6.0cm]{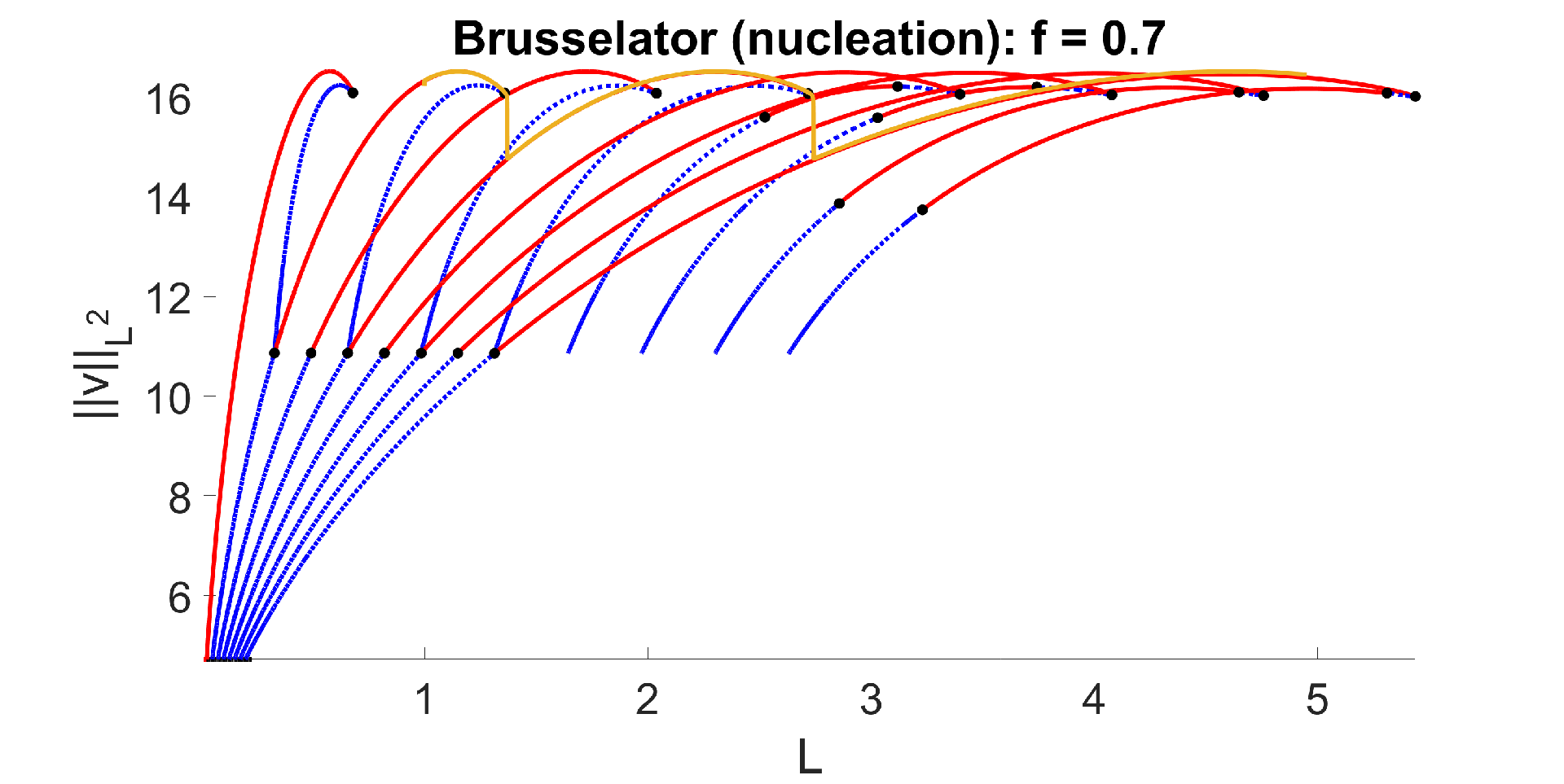}
    \includegraphics[width=0.85\textwidth,height=6.0cm]{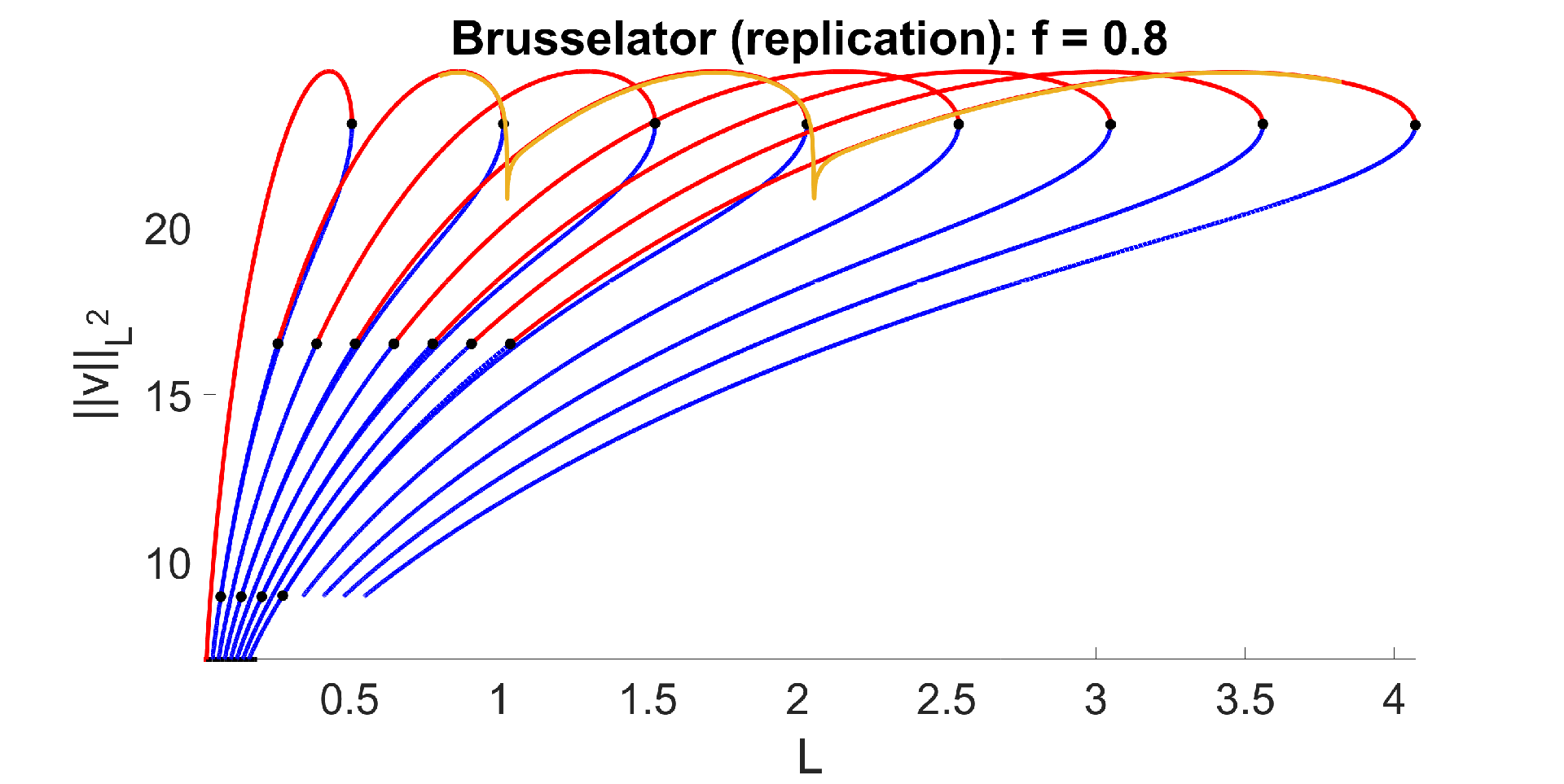}
    \caption{Same caption as in Fig.~\ref{fig:bif_Sch_merge} except now the results are for the Brusselator model \eqref{Brusselator_Lagrange}. Parameters: $\varepsilon = 0.01$, $a = 1$, $D = 2$ and $\rho = \varepsilon^2 = 0.0001$. Top panel: for $f = 0.8$ spike self-replication occurs along the yellow curve. Bottom panel: for $f=0.7$ spike nucleation occurs along the yellow curve.}
    \label{fig:bif_Bruss_merge}
\end{figure}

% \begin{figure}[htbp]
%     \centering
%     \includegraphics[width=0.85\textwidth,height=6.0cm]{Brusf0.8sols.eps}
%     \includegraphics[width=0.85\textwidth,height=6.0cm]{Brusf0.7sols.eps}
%     \caption{Same caption as in Fig.~\ref{fig:bif_Sch_merge} except
%       now the results are for the Brusselator model
%       \eqref{Brusselator_Lagrange}. Parameters: $\varepsilon = 0.01$,
%       $a=1$, $D = 2$ and $\rho=\varepsilon^2=0.0001$.  Top panel: for
%       $f = 0.8$ spike self-replication occurs along the yellow
%       curve. Bottom panel: for $f=0.7$ spike nucleation occurs along
%       the yellow curve.}
%     \label{fig:bif_Bruss_sols}
% \end{figure}

\begin{figure}[htbp]
    \centering
    \includegraphics[width=0.85\textwidth,height=6.0cm]{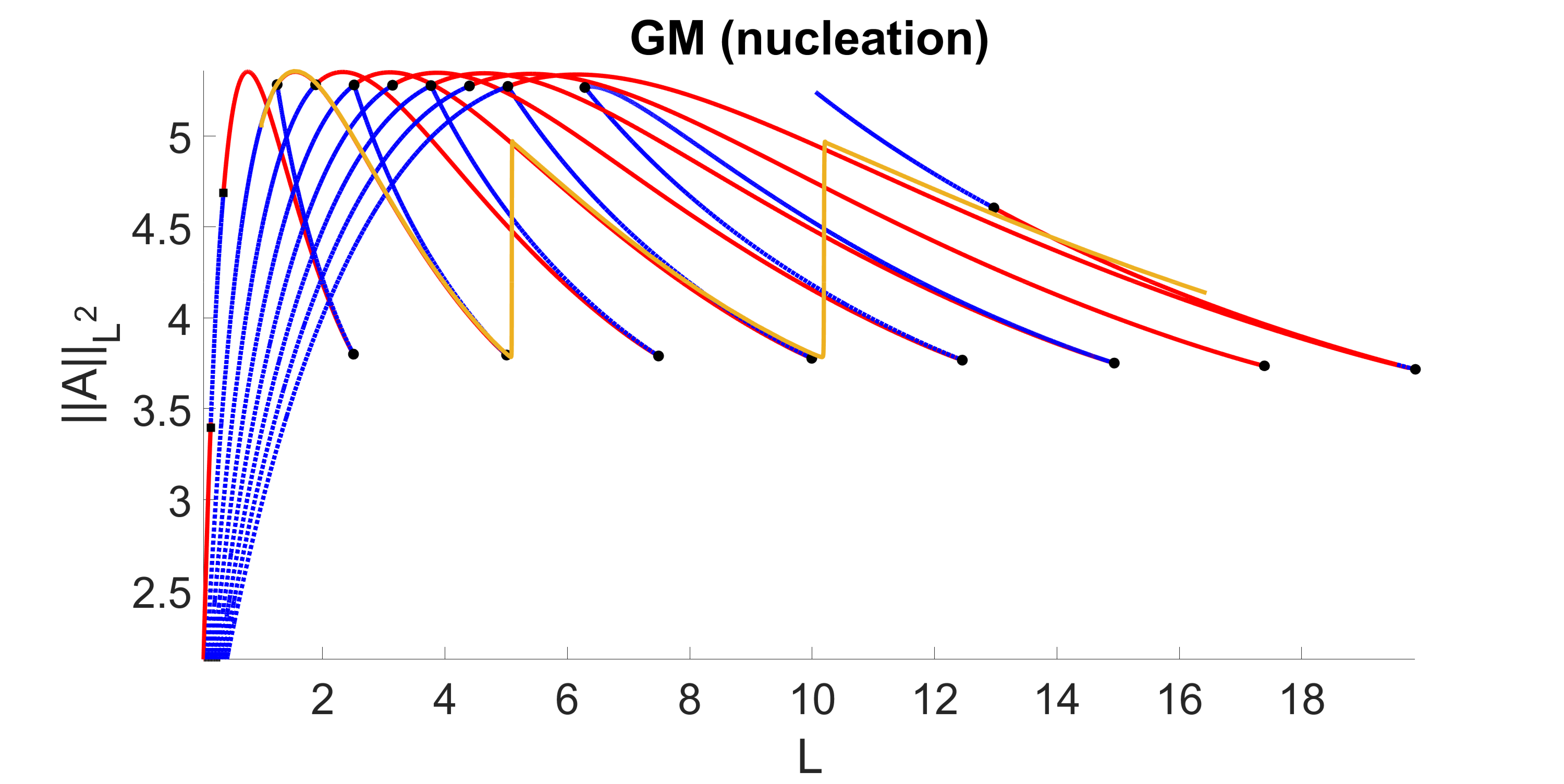}
    \caption{Plots of bifurcation branches for the GM model \eqref{GM_Lagrange}, starting from a half-spike solution and with successive increments of a half-spike, for multi-spike equilibria (red curves are linearly stable) overlaid with the time-dependent solutions (yellow curves), as computed from full \textit{FlexPDE} \cite{flexpde2015} simulations. The vertical axis is the $L_2$ norm of the spike solution $A$. Parameters: $\varepsilon = 0.02$, $D = 2$ and $\rho = \varepsilon^2 = 0.0004$. For $\kappa = 0.5$ spike nucleation occurs along the yellow curve.}
    \label{fig:bif_GM}
\end{figure}

%\begin{figure}[htbp]
%     \centering
%    \includegraphics[width=0.85\textwidth,height=6.0cm]{GMsols.eps}
%    \caption{Similar to Figure \ref{fig:bif_Sch_sols} but the
%      solutions correspond to the branches shown in Figure
%      \ref{fig:bif_GM}.}
%     \label{fig:bif_GM_sols}
% \end{figure}

For the Schnakenberg model \eqref{Sch_Lagrange} with parameters $\varepsilon = 0.04$, $D = 4$, $b = 1$, and growth rate $\rho = \varepsilon^2 = 0.0016$, in Fig.~\ref{fig:bif_Sch_merge} the quasi-steady state solutions computed from time-dependent PDE simulations are superimposed on the steady-state solution branches for $a = 0.2$ and $a = 0.5$ separately, in order to illustrate the two distinct spike-generating mechanisms. The leftmost solution branch is for a half-spike steady-state. In the top panel of Fig.~\ref{fig:bif_Sch_merge} where $a = 0.2$, we observe that the solution jumps from 1 interior spike to 2 interior spikes, and then to 4 interior spikes as $L$ increases. This clearly illustrates spike self-replication behavior as was shown in the left panel of Fig.~\ref{fig:Sch_flexpde_plot} from full PDE simulations. In the bottom panel of Fig. \ref{fig:bif_Sch_merge} where $a = 0.5$, we observe that the quasi-steady solution jumps from 1 interior spike to 2 spikes (1 interior + 2 boundary spikes), and then to 4 spikes (3 interior + 2 boundary spikes) as $L$ increases. The corresponding time-dependent spike-nucleation behavior was shown in the right panel of Fig.~\ref{fig:Sch_flexpde_plot}. For $a = 0.2$, the solutions at the diamond indicated points in the top panel of Fig.~\ref{fig:bif_Sch_merge} are shown in Fig.~\ref{fig:bif_Sch_sols}.

In Fig.~\ref{fig:bif_Bruss_merge}, we consider the Brusselator model with parameters $\varepsilon = 0.01$, $D = 2$, $a = 1$, and growth rate $\rho = \varepsilon^2 = 0.0001$. We have shown that two spike-generating mechanisms can occur depending on the parameter $f$. For $f = 0.8$, where spike self-replication is predicted, we observe from the top panel in Fig.~\ref{fig:bif_Bruss_merge} that the solution jumps from 1 interior spike to 2 interior spikes, and then to 4 interior spikes, as is consistent with time-dependent behavior shown in the left panel of Fig.~\ref{fig:Bru_flexpde_plot}.  In the bottom panel of Fig.~\ref{fig:bif_Bruss_merge} where $f=0.7$, we observe that the solution jumps from 1 interior spike to 2 spikes (1 interior + 2 boundary spikes), and then to 4 spikes (3 interior + 2 boundary spikes). This spike-nucleation behavior was shown in the full-time-dependent simulations in the right panel of Fig.~\ref{fig:Bru_flexpde_plot}.

Finally, we consider the GM model \eqref{GM_Lagrange} with $\varepsilon = 0.02$, $D = 2$ and growth rate $\rho = \varepsilon^2 = 0.0004$. For $\kappa = 0.5$, in Fig.~\ref{fig:bif_GM} the quasi-steady state solutions (yellow curves) computed from time-dependent PDE simulations are superimposed on the solution branches of spike equilibria. The leftmost solution branch in Fig.~\ref{fig:bif_GM} is for a half-spike solution. From this figure, we observe that the quasi-steady solution first jumps from 1 interior spike to 2 spikes (1 interior + 2 boundary spikes), and then to 4 spikes (3 interior + 2 boundary spikes) as $L$ increases. This illustrates the time-dependent spike nucleation behavior shown in Fig. \ref{fig:GM_flexpde_plot}.

\section{Discussion}\label{sec:discussion}

For three RD models in the semi-strong interaction regime, as characterized by an asymptotically large diffusivity ratio, we have analyzed two distinct global bifurcation mechanisms that lead to the generation of spatial patterns of increased complexity as the domain half-length $L$ slowly increases. By treating the domain length as a static parameter, we have shown that spike self-replication can occur from the passage beyond a saddle-node point associated with the local profile of a spike. In contrast, spike nucleation occurs owing to the passage beyond a saddle-node point that can be predicted by the non-existence threshold of a reduced nonlinear problem defined in the outer region away from the core of a spike. Depending on the parameter values in either the Schnakenberg or Brusselator models, we have analyzed in detail which of these two bifurcations will occur first as $L$ increases. For the GM model, we have shown that spike nucleation is the only possible spike-generating mechanism. In parameter regimes where a one-spike solution exists on the infinite line, in \S \ref{sec:no_nucleation} we showed that neither spike self-replication nor spike-nucleation will occur as $L$ increases for our three RD systems in the semi-strong interaction regime. Our theoretical predictions regarding whether self-replication or nucleation will occur are encoded in the parameter phase diagrams in Fig.~\ref{fig:phase_diag}.

For the time-dependent problem on an exponentially slowly growing domain with growth rate $\rho = \varepsilon^2$ we have shown through numerical simulations that our asymptotic theory, in which $L$ is treated as a parameter, can accurately predict critical values of the domain length where either spike self-replication or spike-nucleation will occur. Our choice of $\rho = \mathcal O(\varepsilon^2)$ is motivated by the need to ensure that the domain growth rate is asymptotically smaller than the $\mathcal O(\varepsilon)$ growth rate that is associated with the small eigenvalues of the linearization of spike equilibria (cf.~\cite{kolokolnikov2005existence}). In this way, when $\rho = \varepsilon^2$ the domain growth is sufficiently slow so that, to leading order, the quasi-steady solutions can be parameterized by $L$.

We remark that the necessary conditions for self-replicating patterns, formulated in \cite{skeleton} in the context of exponentially weakly interacting pulses on the infinite line and applied to slow domain growth in \cite{ueda} for the weak interaction regime, are still relevant to our semi-strong interaction setting. In particular, one of the two branches of steady-state solutions with $K$ spikes is linearly stable on an $\mathcal O(1)$ time scale, there is a dimple-shaped neutral eigenfunction for the linearization of $K$-spike equilibria at the saddle-node bifurcation point, and that further branches of multi-spike equilibria have a ``lining-up'' or nearly coincident structure (see \S 1 of \cite{ward_2005} for more details). This global bifurcation mechanism for transitions to patterns of increased spatial complexity was illustrated in \S \ref{sec:transitions} by our superposition of the time-dependent PDE simulation results with growth rate $\rho = \varepsilon^2$ on the solution branches of spike equilibria.

We now apply our phase diagrams to theoretically explain some previous numerical results in \cite{crampin_1999,crampin_2002,methods,tzou_bruss} illustrating either spike self-replication or spike nucleation. For the Schnakenberg model \eqref{Schnakenberg} with $a=0.1$, $b=0.9$, and a diffusivity ratio $\varepsilon^2/D = 0.01$, in Fig.~2 of \cite{crampin_1999} spike self-replication dynamics were observed for an exponentially growing domain with growth rates satisfying $10^{- 6} < \rho < 10^{- 2}$. Since $a < a_c(0.9) \approx 0.441$, as obtained from \eqref{sch:ac_approx} for $a_c(b)$, we conclude that this parameter set is in the blue-shaded region of our phase diagram in Fig.~\ref{fig:Sch_phase_diag} where we predict that spike self-replication behavior will occur as the domain length slowly increases.

Next, we consider the GM model written, as in \eqref{appgm:rd} of Appendix \ref{app:nondim}, in the form
\begin{equation}\label{disc:rd}
    A_T = D_a A_{XX} - \mu_a A + \nu_a \frac{A^2}{H} + \delta_a \,, \qquad H_T = D_h H_{XX} - \mu_h H + \nu_h A^2 \,.
\end{equation}
For the parameter set $\mu_a = \nu_a = 0.01$, $\mu_h = \nu_h = 0.02$, $\delta_a = 0.001$, $D_h = 1$ and $D_a = 0.01$, in Fig.~7b of \cite{crampin_1999} it was shown from full PDE simulations that \eqref{appgm:rd} exhibits spike nucleation behavior as the domain length slowly increases with growth rate $\rho=0.01$. As shown in \eqref{appgm:param} of Appendix \ref{app:nondim}, this parameter set corresponds to setting $\tau = 1/2$ and $\kappa = 0.1$ in \eqref{GM}, where the diffusivity ratio is $\varepsilon^2/D = 0.02$ (see also Fig.~1a of \cite{crampin_2002}). Since $\kappa < 1$, our theoretical analysis in \S \ref{sec:gm} predicts that spike nucleation behavior will occur as the domain length increases. Similarly, in Fig.~5.1b of \cite{methods}, spike nucleation was observed numerically for \eqref{appgm:rd} when $\mu_a = 0.5$, $\nu_a = \mu_h = \nu_h = 1$, $\delta_a = 0.005$, $D_a = 0.01$ and $D_h = 1$.  From \eqref{appgm:param}, this corresponds to setting $\tau = 1/2$ and $\kappa=0.05$ in \eqref{GM} where the diffusivity ratio is ${\varepsilon^2/D}=0.005$. Since $\kappa < 1$, our theoretical analysis again predicts spike nucleation behavior as the domain length slowly increases.

Next, consider the Brusselator model, written in the alternative form of \cite{tzou_bruss} as
\begin{equation}\label{disc:bruss}
  V_t=\varepsilon^2 V_{xx} + \varepsilon - V + fU V^2 \,, \qquad
  \tau U_t = D_u U_{xx} + \frac{1}{\varepsilon}\left(V-U V^2\right) \,.
\end{equation}
In Fig.~23 of \cite{tzou_bruss} it was observed from full PDE simulations on the fixed domain $|x| \leq 1$ that, with a one-spike initial data, self-replication dynamics leading to a four-spike steady-state occurs for the parameter set $\varepsilon = 0.01$, $D_u = 0.02$, $\tau = 0.001$ and $f = 0.95$. To explain this result theoretically, we introduce $V = (\varepsilon \tau)^{1/2} v$ and $U = (\varepsilon \tau)^{- 1/2} u$ in \eqref{disc:bruss} to conclude that $v$ and $u$ satisfy \eqref{Brusselator} on $|x| \leq 1$ where $D = D_u/\tau = 20$ and $a = \sqrt{\varepsilon/\tau} = \sqrt{10} \approx 3.16$ in \eqref{Brusselator}. Since $f = 0.95 > f_c \approx 0.769$, we predict that spike self-replication rather than spike-nucleation will be the pattern-generating mechanism. From Fig.~\ref{fig:Bru_core_bif_c} and Fig.~\ref{fig:Bru_Cb_vs_B} we estimate for $f = 0.95$ that $B_c = 0.245$ and $C_b = 1.36$ at the saddle-node point of the core problem. By solving \eqref{bruss:thresh_repL} numerically, in Fig.~\ref{fig:bruss_tzou} we plot $L_K^{rep}$ versus $a$ for $K = 1, 2, 4$. For $a = \sqrt{10}$ and $L = 1$, we observe from this figure that $L = 1$ is in the range $L_2^{rep} < 1 < L_4^{rep}$, where a four-spike steady-state, such as observed in the PDE simulations of \cite{tzou_bruss}, is theoretically predicted.

\begin{figure}[h!tbp]
    \centering
    \includegraphics[width=0.55\textwidth, height=6.0cm]{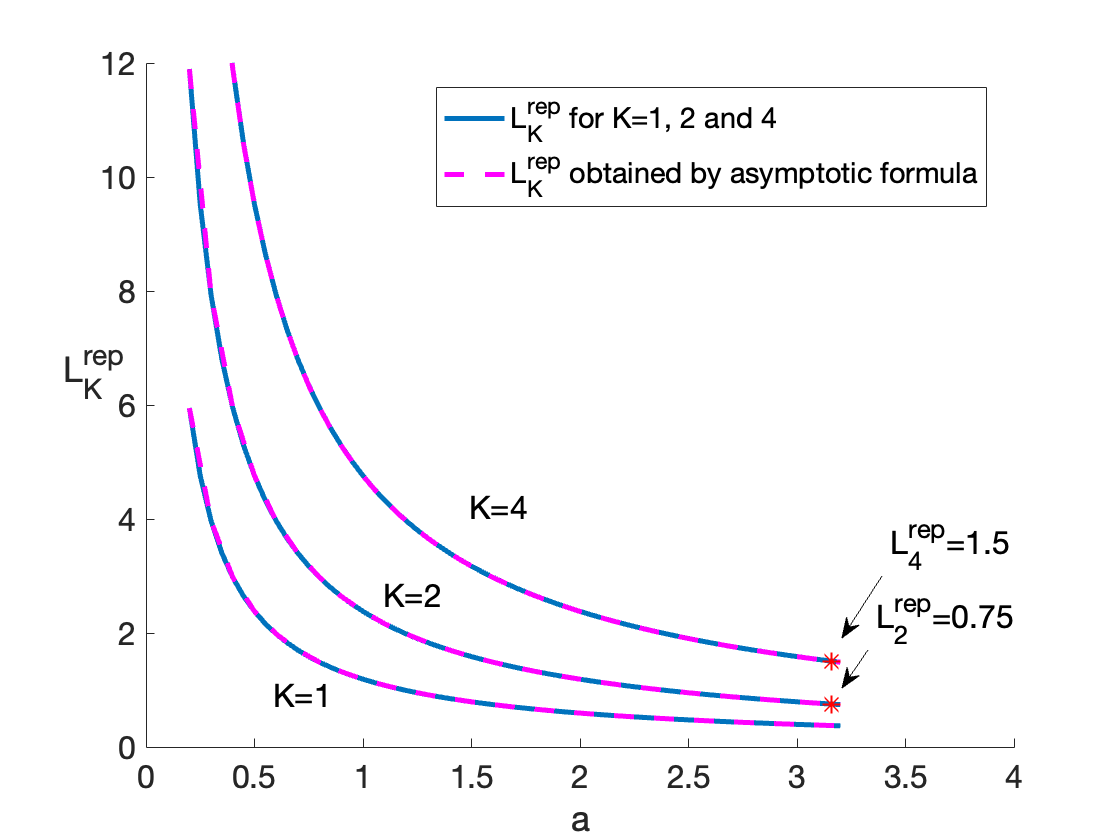}
    \caption{Spike self-replication thresholds $L_K^{rep}$ versus the parameter $a$, as computed numerically from PDE simulations of \eqref{Brusselator_Lagrange} and from the asymptotic formula \eqref{bruss:thresh_repL} for the Brusselator model \eqref{Brusselator_Lagrange} with $f = 0.95$, $\varepsilon = 0.01$ and $D = 20$ for $K = 1, 2, 4$. The spike-splitting dynamics in \cite{tzou_bruss} had $L = 1$, $D = 20$ and $a = \sqrt{10} \approx 3.16$. Since $L_2^{rep} < 1 < L_4^{rep}$, our analysis confirms the four-spike steady-state observed in \cite{tzou_bruss}.}
    \label{fig:bruss_tzou}
\end{figure}

Although our PDE simulations have focused only on slow exponential domain growth, our asymptotic theory can still be applied to other prescribed slow uniform domain growth models. In particular, for slow logistic growth where
\begin{equation*}
    L(t) = \frac{L_\infty e^{\rho t}} {L_\infty + \left(e^{\rho t} - 1\right)}\,,
\end{equation*}
with $\rho = \mathcal O(\varepsilon^2)$ and $L(\infty) = L_\infty$, the dilution terms $- \rho v$ and $- \rho u$ in \eqref{Sch_Lagrange} are replaced by $- L^{\prime} v/L$ and $-L^{\prime} u/L$, which now have time-dependent coefficients. However, when $\rho = \mathcal O(\varepsilon^2)$, in our asymptotic theory we can safely neglect these dilution terms. As a result, for parameter values where the Schnakenberg model admits spike self-replication behavior, we predict that the final steady-state pattern under logistic growth will have $2 K$ spikes when $L_\infty$ is on the range $L_K^{rep} < L_\infty < L_{2 K}^{rep}$,  where $L_K^{rep}$ is defined in \eqref{sch:thresh_repL}. Similar predictions can be made when spike-nucleation occurs. However, it would be worthwhile to determine whether the asymptotic approach developed herein can be extended to problems with nonuniform domain growth such as apical growth (cf.~\cite{crampin_2002}), or where the domain growth is controlled by the concentrations of the two species (cf.~\cite{krause_concengrowth,neville}).

\begin{figure}[htbp]
    \centering
    \includegraphics[width=0.85\textwidth]{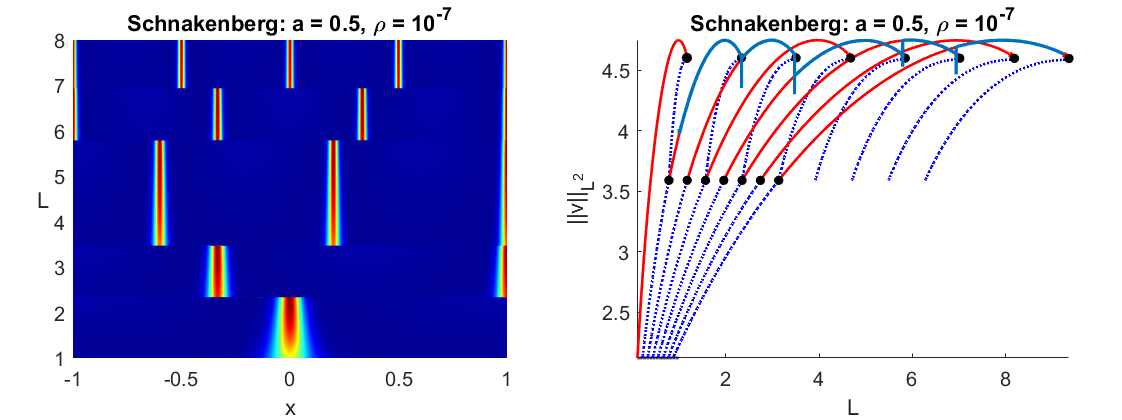}
    \caption{Mode transition failure for spike-nucleation in the Schnakenberg model with the much smaller exponential growth rate $\rho = 10^{- 7}$. Parameters: $b = 1$, $\varepsilon = 0.04$, $D = 4$ and $a = 0.5$. Left panel: time-dependent results from FlexPDE \cite{flexpde2015}. Right panel: time-dependent results superimposed on steady-state bifurcation branches computed from \textit{pde2path} \cite{pde2path}.}
    \label{fig:sch_nuc_fail}
\end{figure}

A further important extension of our study would be to analyze delayed bifurcation phenomena associated with slow domain growth that can lead to ``mode transition'' failure as was shown numerically in \cite{barrass} for the Schnakenberg model. In particular, for the much slower exponential growth rate $\rho = 10^{- 7}$, in Fig.~\ref{fig:sch_nuc_fail} we observe mode transition failure in the spike nucleation regime for \eqref{Sch_Lagrange}. Qualitatively, it is well known that when the rate of passage of a bifurcation parameter is sufficiently slow, there can be a delay in the onset of a bifurcation and that the behavior of the underlying system becomes increasingly sensitive to noise and discretization errors (cf.~\cite{gentz,kuehn_2011}). Although relatively well understood in an ODE setting, there have been relatively few analytical studies of the effect of delayed bifurcation for pattern-forming PDE systems in the nonlinear regime (cf.~\cite{knobloch_2015,knobloch_2024,tzou_2015}). Alternatively, with a significantly larger growth rate for which the quasi-steady state assumption is no longer approximately satisfied, in Fig.~\ref{fig:sch_rep_fail} we observe that mode transition failure can also occur for the Schnakenberg model in the regime where spike self-replication behavior should occur. As a result, we expect that our theoretical predictions for spike self-replication and spike nucleation should be realized numerically only in some
intermediate range of growth rates.

\begin{figure}[htbp]
    \centering
    \includegraphics[width=0.85\textwidth]{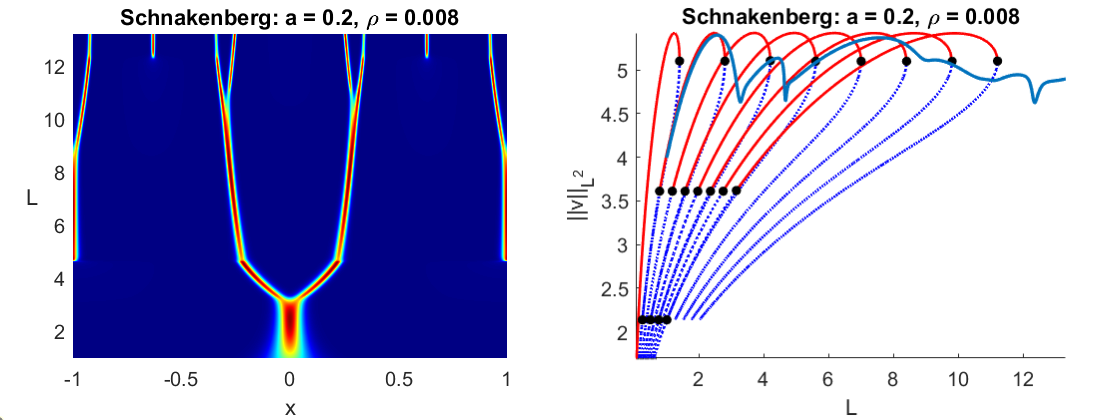}
    \caption{Mode transition failure for spike self-replication in the Schnakenberg model with the larger exponential growth rate $\rho = 0.008$. Parameters: $b = 1$, $\varepsilon = 0.04$, $D = 4$ and $a = 0.2$. Left panel: time-dependent results from FlexPDE \cite{flexpde2015}. Right panel: time-dependent results superimposed on steady-state bifurcation branches computed from \textit{pde2path} \cite{pde2path}.}
    \label{fig:sch_rep_fail}
\end{figure}

We emphasize that our analysis has focused only on pattern-generating dynamics for spike-type solutions of a few RD systems on slowly growing domains in the semi-strong interaction limit of a large diffusivity ratio. For other 1-D RD systems with bistable behavior, such as the GM model with activator saturation among others, mesa-type solutions characterized by a plateau region connected by transition layers can be constructed in the semi-strong limit. Such mesa solutions can also exhibit mesa-splitting dynamics as the domain length slowly increases (see \cite{mesa_2007}, Fig.~28 of \cite{stripe_mjw}, and the piecewise nonlinear RD model in \cite{crampin_mode}).

\begin{figure}
    \centering
    \includegraphics[width = 0.6\textwidth]{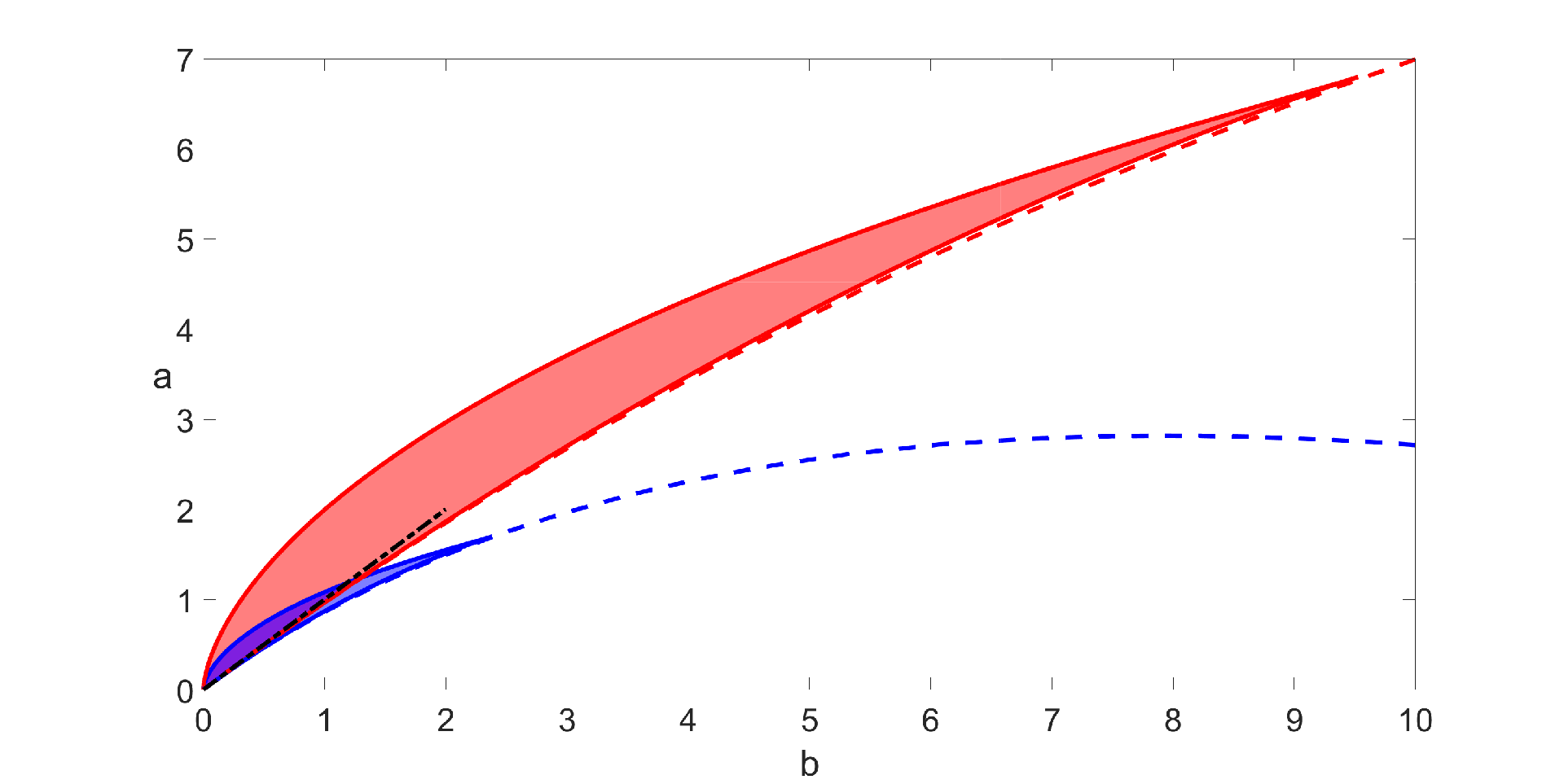}
    \caption{Numerically computed existence region in the $(b,a)$-plane for stationary localized solutions to the Schnakenberg model \eqref{Schnakenberg} on the infinite domain $L\to \infty$ for $D = 1$ and $\varepsilon = 0.04$ (blue lines, color version online)  and $\varepsilon = 0.01$ (red). The existence region is bounded by solid lines corresponding to the fold curves of single-spike solutions. The dashed lines correspond to the Turing bifurcation of the spatially homogeneous solution on the real line for the corresponding $\varepsilon$-values and the (black) dot-dashed curve is the line $a=b$ which is included for reference only. } 
    \label{fig:snaking_region_schn}
\end{figure}

It is also interesting to note the correspondence of the results obtained here for slowly growing domain length $L$ to what is known about the infinite-domain problem, see \cite{FahadWoods,FahadGrayScott,Gai,Degenerate}. Through a combination of asymptotic theory, so-called spatial dynamics arguments, and numerical continuation, it is known for each of the models \eqref{Schnakenberg}--\eqref{GM} considered here, that the Turing bifurcation of the infinite-domain homogeneous equilibrium is sub-critical for small $\varepsilon$. This leads to a region of parameters where there is bi-stability between the homogeneous equilibrium and a spatially periodic pattern, within which there is a subregion in which there are stationary localized stationary solutions, which for small-$\varepsilon$ solutions resemble the single spike solutions studied here for fixed finite $L$. 

To delineate this correspondence, we consider the Schnakenberg model (\ref{Schnakenberg}) in detail (similar results pertain for the Brusselator and GM models, but are omitted for brevity). Moreover, without loss of generality, for the infinite domain problem, we set $D = 1$. The tear-shaped region in the $(a, b)$-plane for which infinite-domain localized solutions exist is shown in Fig.~\ref{fig:snaking_region_schn} for two different $\varepsilon$-values. These localized-solutions regions were computed by numerical continuation of folds (saddle-node instabilities) of localized solutions for a large finite $L$. (Note that standard theorems from spatial dynamics show that for each localized solution on the infinite domain, there is a unique solution on $[- L, L]$ with Neumann boundary conditions that converges uniformly to it as $L\to \infty$). The requirement that solutions are exponentially localized means that the localized solutions must lie above the curve of Turing bifurcations of the homogenous equilibrium on the infinite domain, which is readily computed to be \cite{Gai}
\begin{equation}
    b - \left(a + b\right)^3{\varepsilon}^2 - a = 2 \, \left(a + b\right)^2 \varepsilon, \label{eq:Sch_Turing}
\end{equation}
and is depicted using dashed lines in Fig.~\ref{fig:snaking_region_schn}. The pinch point at the upper end of the tear is the codimension-two point at which the Turing bifurcation switches from super- to sub-critical, the precise location of which was computed in \cite{Degenerate}. Close to here, localized solutions have oscillatory tails and are connected to stable localized multi-pulse solutions through a so-called homoclinic snaking bifurcation diagram \cite{Beck,kno}. In fact, the tear-shaped region is divided in two by the line \cite{Gai}
\begin{equation}
    b - \left(a + b\right)^3{\varepsilon}^2 - a = - 2 \, \left(a + b\right)^2 \varepsilon. \label{eq:Sch_BD}
\end{equation}
(curve not shown in the figure) which delineates whether the far field approaching the homogeneous equilibrium has oscillatory or monotone decay. As one transitions from the oscillatory to the monotone region, a so-called Belyakov-Devaney transition occurs where infinitely many multi-pulse solutions are destroyed through a non-local bifurcation mechanism \cite{Nico}.

It is interesting to note how the lower part of this tear (for small $a$ and $b$) appears to have two clear asymptotes as $\varepsilon \to 0$. The upper-$a$ fold was computed asymptotically in \cite{Gai} to be 
$$
    b^2 = 12 \varepsilon^3 \left(a + b \right)^3
$$
which tends to $b = 0$ as $\varepsilon \to 0$. A Hopf bifurcation curve can be shown to exist close to this fold, which is numerically observed to be sub-critical.

In contrast, and most relevant to the present study, the lower-$a$ boundary appears to asymptote to the line $a=b$ in the limit $\varepsilon \to 0$. This corresponds exactly to the condition \eqref{eq:vinfty_sch} which is the same as the nucleation instability calculated in \S~\ref{sec:no_nucleation}. Note also that both the Turing bifurcation \eqref{eq:Sch_Turing} and Belyakov-Devaney transition \eqref{eq:Sch_BD} also asymptote to the same curve as $\varepsilon \to 0$. 

It is interesting then to consider what happens in a related problem than that studied in this paper, namely fixing $D$ and $a$ in the Schnakenberg problem for a large fixed $L$, and slowly increasing $b$ from an initial value close to a one-spike solution for small $b$. As one crosses $b = a$, one would expect to see additional spikes appear via nucleation. Then for yet-higher $b$-values, as we cross the $b$-value corresponding to $a = a_c(b)$ in Fig.~\ref{fig:sch:ac_vs_b}, further spikes should appear via spike-splitting. Figure \ref{fig:nuc-repl} shows just such a computation, where we notice that the single spike at $x=0$, first destabilizes via nucleation (from around $b=1.5$) before proceeding to generate new spikes via splitting (from around $b=4$). 

\begin{figure}
    \centering
    \includegraphics[width = \linewidth]{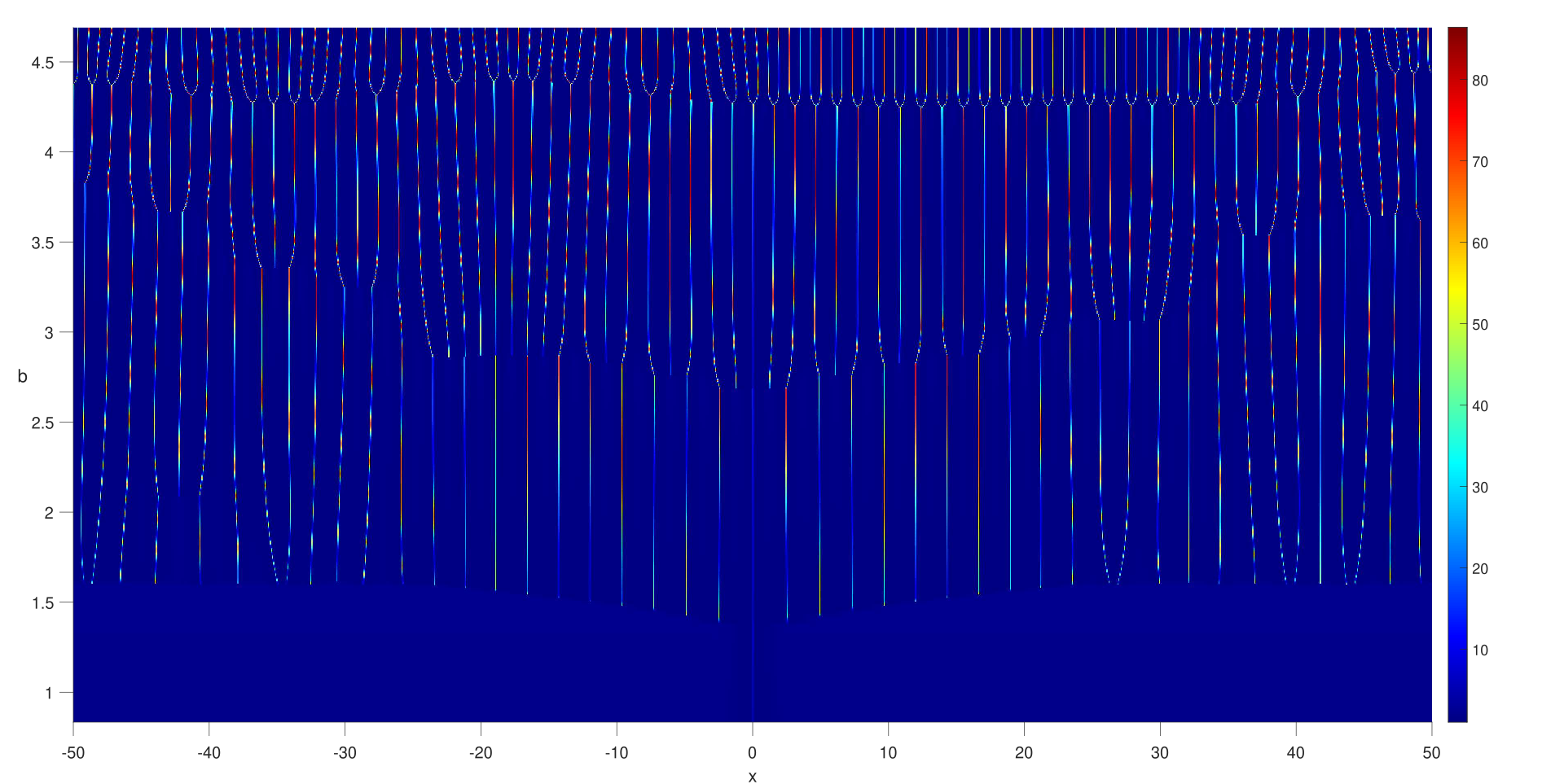}
    \caption{Time-integration of the Schnakenberg system \eqref{Schnakenberg} for $a = 1$, $\varepsilon = 0.01$, $D = 4$, $L = 50$ and $b = b_0 \, e^{\rho t}$, where $b_0 = 0.834227$ and $\rho = 10^{- 4}$, with the spike at $b = b_0$ as an initial condition. The vertical axis is the time $t$ but we show the corresponding value of $b$ at each time step to see how said value determines the phenomenon that occurs to the spikes that exist from the beginning of the integration.}
    \label{fig:nuc-repl}
\end{figure}

Finally, it is useful to discuss a few open problems in higher spatial dimensions that warrant further investigation. Firstly, in a rectangular domain that is slowly elongated horizontally, it would be interesting to analyze whether a stripe pattern, consisting of 1-D quasi-steady spike solutions trivially extended in the direction transverse to the domain growth, is linearly stable to break up into spots and to zigzag instabilities. For the GM model with $\kappa = 0$, it is well-known that for such a homoclinic stripe there is always an unstable band of transverse wavelengths for the linearized problem that leads to the breakup of a stripe into a collection of spots unless the rectangular domain is asymptotically thin (cf.~\cite{stripe_doel,stripe_mjw}). With a nontrivial background $\kappa>0$ for the GM model, and where the slow domain growth introduces the possibility of delayed bifurcations and tracking close to unstable solution branches, it would be interesting to determine if transverse instabilities can be eliminated. Secondly, for a radially symmetric domain with a single spot centered at the origin, it would be interesting to analyze the effect of slow radial domain growth for each of our three RD systems. In particular, for the Schnakenberg and Brusselator models, it would be worthwhile to determine a phase diagram in parameter space where either spot self-replication, owing to a symmetry-breaking bifurcation, or spot-nucleation initiated near the domain boundary, owing to the secondary breakup of a boundary layer solution into spots, will occur first as the domain radius slowly increases. The theoretical challenge with this analysis will be to analyze a nonlinear radially symmetric outer problem that no longer has a first integral.

%\bibliographystyle{plain} 
%\bibliography{refs} 

\section*{Acknowledgments}
Edgardo Villar-Sepúlveda was funded by ANID, Beca Chile Doctorado en el extranjero, number 72210071. Michael J. Ward gratefully acknowledges the support of the NSERC Discovery Grant Program.

\begin{appendices}
\renewcommand{\theequation}{\Alph{section}.\arabic{equation}}
\setcounter{equation}{0}

\section{The dimensionless Brusselator and GM models}
\label{app:nondim}

In this appendix, we first non-dimensionalize the Brusselator RD system of \cite{prig}, given by for $E > 0$ and $B > 0$ by
\begin{equation}\label{appb:rd}
    \mathcal V_T = D_v \mathcal V_{XX} - (B + 1) V + E + \mathcal U \mathcal V^2 \, , \qquad \mathcal U_T = D_u \mathcal U_{XX} + B \mathcal V - \mathcal U \mathcal V^2 \,.
\end{equation}
Upon introducing the scaling $\mathcal U = u_c u$, $\mathcal V = v_c v$, $t = T \sigma$ and $X = L x$ into \eqref{appb:rd} we obtain
\begin{align*}
    \frac{1}{\sigma (B + 1)} v_t &= \frac{D_v}{(B + 1) L^2} v_{xx} - v + \frac{E}{(B + 1) v_c} +  \frac{u_c v_c}{(B + 1)} u v^2 \, ,
    \\
    \frac{1}{v_c^2 \sigma} u_t &= \frac{D_u}{v_c^2 L^2} u_{xx} + \frac{B}{u_c v_c} v - u v^2 \, .
\end{align*}
By choosing $u_c v_c = B$, $\sigma = (B + 1)^{- 1}$ and $v_c = \sqrt{B + 1}$, we get
\begin{align*}
    v_t &= \frac{D_v}{(B + 1) L^2} v_{xx} - v +  \frac{E}{(B + 1)^{3/2}} + \frac{B}{B + 1} u v^2 \, ,
    \\
    u_t &= \frac{D_u}{(B + 1) L^2} u_{xx} + v - u v^2 \, .
\end{align*}
Finally, we identify the key non-dimensional parameters $f$, $a$, and $D$, which we define by
\begin{equation}\label{appd:nondim}
    a \equiv \frac{E}{(B + 1)^{3/2}} \, , \qquad f \equiv \frac{B}{B + 1} \, , \qquad D \equiv \frac{D_u}{(B + 1) L^2} \, ,
\end{equation}
and where we observe that $0<f<1$ must hold. In this way, we obtain the dimensionless form \eqref{Brusselator} for the Brusselator when we assume that $\varepsilon^2 \equiv D_v/{(B + 1) L^2} \ll 1$ and $D = \mathcal O(1)$.

In a similar way, we non-dimensionalize the Gierer-Meinhardt model of \cite{gierer}, given by
\begin{equation}\label{appgm:rd}
    A_T = D_a A_{XX} - \mu_a A + \nu_a \frac{A^2}{H} + \delta_a \,, \qquad H_T = D_h H_{XX} - \mu_h H + \nu_h A^2 \,.
\end{equation}
Upon introducing the scalings $A = A_c \mathcal A$, $H = H_c \mathcal H$ and $T = t/\mu_a$, \eqref{appgm:rd} transforms into
\begin{align*}
    \mathcal A_t &= \frac{D_a}{\mu_a} \mathcal A_{XX} - \mathcal A + \left(\frac{\nu_a A_c}{\mu_a H_c}\right) \frac{\mathcal A^2}{\mathcal H} + \frac{\delta_a}{\mu_a A_c} \,,
    \\
    \frac{\mu_a}{\mu_h} \mathcal H_t &= \frac{D_h}{\mu_h} \mathcal H_{XX} - \mathcal H + \left(\frac{\nu_h A_c^2}{\mu_h H_c}\right) \mathcal A^2\,.
\end{align*}
By choosing $A_c = \mu_h \nu_a/(\nu_h \mu_a)$ and $H_c = \mu_h \nu_a^2/(\nu_h \mu_a^2)$, we obtain the non-dimensional form given in \eqref{GM}, where we identify $\kappa$, $\tau$, $\varepsilon$ and $D$ in \eqref{GM} by
\begin{equation}\label{appgm:param}
    \tau \equiv \frac{\mu_a}{\mu_h} \,, \quad D \equiv \frac{D_h}{\mu_h} \,, \quad \varepsilon^2 \equiv \frac{D_a}{\mu_a} \,, \quad \kappa \equiv \frac{\delta_a \nu_h}{\mu_h \nu_a} \,.
\end{equation}
In \eqref{GM}, we will assume a large diffusivity ratio in the sense that
\begin{equation}\label{appgm:rat}
  \frac{\varepsilon^2}{D}=\frac{D_a\mu_h}{D_h\mu_a}\ll 1\,.
\end{equation}

\section{The eigenvalue problem for the core solution}\label{app:stab}

In this appendix, we derive the eigenvalue problem \eqref{schnak:eig_prob} that determines instabilities on an $\mathcal O(1)$ time-scale for the Schnakenberg model. With $u_e$, $v_e$ denoting the steady-state solution to \eqref{Sch_core}, we introduce the perturbation
\begin{equation}\label{apps:eigen}
    v = v_e + e^{\lambda t} \phi \,, \qquad u = u_e + e^{\lambda t} \eta\,,
\end{equation}
into \eqref{Sch_core}. Upon linearizing, we obtain the eigenvalue problem
\begin{equation}\label{apps:eigprob}
    \varepsilon_L^2 \phi_{xx} - \phi + 2 u_e v_e \phi + v_e^2 \eta = \lambda \phi \,, \qquad D_L \eta_{xx} - 2 u_e v_e \phi - v_e^2 \eta = \lambda \eta\,.
\end{equation}

In the inner region, we recall from \eqref{sch:inner} and \eqref{sch:expan} that, with $\sqrt{D_L}/\varepsilon_L = D/\sqrt{\varepsilon}$ from \eqref{sch:scale}, we have $y = x/\varepsilon_L$, $v_e \sim \sqrt{D} V_0/\varepsilon$ and $u_e \sim \varepsilon U_0/\sqrt{D}$. As a result, in the inner region \eqref{apps:eigprob} becomes
\begin{equation}\label{apps:eigprob_inner}
    \Phi_{yy} - \Phi + 2 U_0 V_0 \Phi + \frac{D}{\varepsilon^2} V_0^2 N = \lambda \Phi \, , \qquad \frac{D}{\varepsilon^2} N_{yy} - 2 U_0 V_0 \Phi - \frac{D}{\varepsilon^2} V_0^2 N = \lambda N
\end{equation}
on $- \infty < y < \infty$, where $U_0$ and $V_0$ satisfy the core problem \eqref{sch:rep_core}. Then, upon introducing the scaling
\begin{equation}\label{apps:N0}
    N = \frac{\varepsilon}{D} \left(N_0(y) + \ldots\right) \,, \qquad \Phi = \frac{1}{\varepsilon} \left(\Phi_0(y) + \ldots\right) \,,
\end{equation}
into \eqref{apps:eigprob_inner} we obtain, to leading-order, that for $\lambda = \mathcal O(1)$,
\begin{equation}\label{apps:eigprob_inner0}
    \Phi_{0yy} - \Phi_0 + 2 U_0 V_0 \Phi_0 + V_0^2 N_0 = \lambda \Phi \,; \qquad N_{0yy} - 2 U_0 V_0 \Phi_0 - V_0^2 N_0 = 0 \,.
\end{equation}
Since we will seek even eigenfunctions we impose that $\Phi_0^{\prime}(0) = N_0^{\prime}(0) = 0$ and consider \eqref{apps:eigprob_inner0} on the half-line $y\geq 0$. The natural far-field behavior of \eqref{apps:eigprob_inner0} is that $\Phi_0\to 0$ exponentially as $y\to +\infty$, and that
\begin{equation}\label{apps:eig_ff}
    N_0(y) \sim c_0 y + d_0 \,, \quad \mbox{as} \quad y\to +\infty\,, \qquad \mbox{where} \quad c\equiv \int_0^\infty
   \left(2 U_0 V_0 \Phi_0 + V_0^2 N_0\right)\,dy\,.
\end{equation}
By writing $y = x/\varepsilon_L$, where $\varepsilon_L = \varepsilon/L$, \eqref{apps:N0} and \eqref{apps:eig_ff} yield the matching conditions $\eta_x(0^+) = c_0 L/D$ and $\eta(0^+) = \mathcal O(\varepsilon)$ for the outer eigenfunction. To complete the derivation of \eqref{schnak:eig_prob} we now show that we must impose $c_0 = 0$ in finding any instabilities with $\mbox{Re}(\lambda) > 0$.
 
In the outer region $0^+ < x < \ell$, we have to leading order that $u_e v_e^2 = v_e - a$ from \eqref{Sch_out3} and $(\lambda + 1 - 2 u_e v_e) \phi = v_e^2 \eta$ from the first equation in \eqref{apps:eigprob}. Upon solving for $\phi$ and eliminating $u_e$, we conclude from the second equation in \eqref{apps:eigprob} that in the outer region $\eta(x)$ satisfies to leading-order
\begin{subequations}\label{apps:etaout}
    \begin{equation}\label{apps:etaout_prob}
        D_L \eta_{xx} - f(x, \lambda)\eta = 0 \,, \quad 0 < x < \ell \,; \quad \eta(0^+) = 0 \, , \quad \eta_x(\ell) = 0 \,,
    \end{equation}
    with $\eta_x(0^+) = c_0 L/D$. Here $f(x,\lambda)$ is defined in terms of the solution $v_e(x)$ to the outer problem \eqref{Sch_fullouter} by
    \begin{equation}\label{apps:etaout_f}
        f(x, \lambda) = \frac{(\lambda + 1) v_e^2}{\lambda - 1 + 2 a/v_e} + \lambda \,.
    \end{equation}
\end{subequations}
Since $a < v_e(x) < 2 a$, we observe that $f(x, \lambda)$ is analytic in $\mbox{Re}(\lambda) \geq 0$ for each $x$ in $0 \leq x \leq \ell$.

We now establish that, for any $\lambda$ satisfying $\mbox{Re}(\lambda) \geq 0$, we must have $\eta(x) \equiv 0$ on $0 \leq x \leq \ell$. To establish this, we first let $\lambda = \lambda_R + i \lambda_I$ with $\lambda_R \geq 0$ and from \eqref{apps:etaout_f} we calculate that
\begin{equation}\label{apps:re_f}
    \mbox{Re}(f) = \frac{\left(\lambda_R + 1\right) v_e^2 \left(\lambda_R - 1 + 2a/v_e\right) + \lambda_I^2 v_e^2}{\left(\lambda_R - 1 + 2a/v_e\right)^2 + \lambda_I^2} + \lambda_R \,.
\end{equation}
Since $a < v_e(x) < 2 a$, we conclude that $\mbox{Re}(f) > 0$ on $0 < x < \ell$ whenever $\lambda_R \geq 0$. Next, we take the conjugate of \eqref{apps:etaout_prob} and add the resulting equation to \eqref{apps:etaout_prob}. Upon integrating by parts over $0 \leq x\leq \ell$, we get
\begin{equation}
    D_L \left(\bar{\eta} \eta_x \vert_0^\ell + \eta \bar{\eta}_x\vert_0^\ell\right) - 2 \int_0^\ell \left(|\eta_x|^2 + \mbox{Re}(f) |\eta|^2\right) \, dx = 0 \,.
\end{equation}
By using $\eta(0) = \bar{\eta}(0) = \eta(\ell) = \bar{\eta}_x(\ell) = 0$, we conclude that the identity $\int_0^\ell \left(|\eta_x|^2 + \mbox{Re}(f) |\eta|^2\right) \, dx = 0$ must hold. Since $\mbox{Re}(f) > 0$ on $0 < x < \ell$, we must have that $\eta(x) \equiv 0$ and so $\eta_x(0) = 0$.

In summary, when seeking instabilities on an $\mathcal O(1)$ time scale where $\mbox{Re}(\lambda) \geq 0$, we must set $c_0 = 0$ in \eqref{apps:eig_ff} so that $N_{0y} \to 0$ as $y \to \infty$. This completes the derivation of the eigenvalue problem \eqref{schnak:eig_prob} associated with the Schnakeberg core problem.

\end{appendices}    

\bibliographystyle{plain} 
\bibliography{manuscript} 

\end{document}